\newtheorem{theorem}{\textbf{Theorem}}[section]
\newtheorem{proposition}[theorem]{\textbf{Proposition}}
\newtheorem{lemma}[theorem]{\textbf{Lemma}}
\newtheorem{corollary}[theorem]{\textbf{Corollary}}
\theoremstyle{definition}
\newtheorem{definition}[theorem]{\textbf{Definition}}
\theoremstyle{remark}
\newtheorem{example}[theorem]{Example}
\newtheorem{remark}[theorem]{Remark}
\numberwithin{equation}{section}
\let\inf\undefined\DeclareMathOperator*{\inf}{inf\vphantom{p}}
\let\min\undefined\DeclareMathOperator*{\min}{min\vphantom{p}}
\let\max\undefined\DeclareMathOperator*{\max}{max\vphantom{p}}
\title[Generalized differentiation in Wasserstein]{Generalized differentiation in Wasserstein space and application to multiagent control problem}
\author[R. Capuani]{Rossana Capuani}
\address{\hspace{-0.5em}\begin{tabular}{ll}Rossana Capuani:&Department of Mathematics,\\& The University of Arizona\\
& 617 N Santa Rita Ave, Tucson, AZ 85721, United States.\end{tabular}}
\email{rossanacapuani@arizona.edu}
\author[A. Marigonda]{Antonio Marigonda}
\address{\hspace{-0.5em}\begin{tabular}{ll}Antonio Marigonda:&Department of Computer Science,\\& University of Verona\\
&Strada Le Grazie 15, I-37134 Verona, Italy.\end{tabular}}
\email{antonio.marigonda@univr.it}
\author[M. Quincampoix]{Marc Quincampoix}
\address{\hspace{-0.5em}\begin{tabular}{ll}Marc Quincampoix:&Laboratoire de Math\'ematiques de Bretagne Atlantique,\\ &Univ Brest, CNRS UMR 6205, \\
&6, avenue Victor Le Gorgeu,\\ &29200 Brest, France.\end{tabular}}
\email{marc.quincampoix@univ-brest.fr}
\date{\today}
\begin{document}

\begin{abstract}
Several concepts of generalized differentiation in Wasserstein space have been proposed in order to deal with the intrinsic nonsmoothness arising in the context of optimization problems in Wasserstein spaces. 
In this paper we introduce a concept of \emph{admissible variation} encompassing some of the most popular definitions as special cases, and using it to derive a comparison principle for viscosity solutions of 
an Hamilton-Jacobi-Bellman equation following from an optimal control of a multiagent systems.
\end{abstract}

\maketitle

\section{Introduction} 
This paper concerns the differentiation of real-valued functions defined on the Wasserstein space $\mathscr P(\mathbb T^d)$, where $\mathbb T ^d$ denotes the $d$-dimensional torus and $\mathscr P(\mathbb T^d)$ the set of probability measures on  $\mathbb T ^d$.
\par\medskip\par
This topic has been extensively investigated in recent years. Major developments have occurred within the framework of mean field games (see, e.g., \cites{AMQ, BonF, CDLL, LL}), in multi-agent control problems (for recent contributions, see \cites{CQ, CMQ, CLOS, JMQ, JMQ21, MQ}), and in the theory of gradient flows (cf. \cite{AGS}, \cite{San}). Moreover, several approaches have been proposed in \cites{AGS, AJZ, BadF, CDLL, Jimenez}, particularly in connection with the formulation of appropriate notions of viscosity solutions for Hamilton–Jacobi equations in the Wasserstein space (see \cites{G1, G2}) and the study of their well-posedness.
\par\medskip\par
In order to address a wide variety of problems, multiple non-equivalent notions of derivative have been introduced. As a result, several competing definitions now coexist. Although each appears well-suited within the framework that originated it, a direct comparison between these notions remains challenging--even under suitable assumptions of smoothness (to be made precise later).
The main difficulty in consistently defining a notion of derivative in the Wasserstein space stems from the lack of linear structure of $\mathscr P(\mathbb T^d)$ endowed with the Wasserstein distance. Since this space is not a vector space, the classical Fréchet or Gâteaux derivatives--whose definitions rely on vector addition and scalar multiplication--cannot be directly applied.
\par\medskip\par
Recall that for a smooth function $\phi:\mathbb R^d \to \mathbb R$, the classical directional derivative of $\phi$ at $x \in \mathbb R^d$ in the direction $v \in \mathbb R^d \setminus \{0\}$ is obtained by considering the variations
\[x_{v,t} := x + t v,\]
and passing to the limit
\[\lim_{t \to 0^+} \frac{\phi(x + t v) - \phi(x)}{t},\]
which amounts to examining increasingly small perturbations of $x$ along the direction $v$. 
In contrast, since $\mathscr P(\mathbb T^d)$ is not a normed vector space, the variation $t \mapsto x + t v$ has no immediate analogue. 
Consequently, an alternative notions of ``variation'' must be devised to define differentiability meaningfully in the space of probability measures.
\par\medskip\par
In order to recover a linear structure, a popular approach introduced by Lions consists in \emph{lifting} functions of measures defined on $\mathscr P_2(\mathbb T^d)$ to functions on a Hilbert space of random variables. 
More precisely, a probability measure $m \in \mathscr P_2(\mathbb T^d)$ is identified with a random variable $X \in L^2(\Omega; \mathbb R^d)$ such that $\mathrm{Law}(X) = m$, and so the lifted functional $\tilde \Phi : L^2(\Omega; \mathbb R^d) \to \mathbb R$ can be defined by
\[\tilde \Phi(X) := \Phi(\mathrm{Law}(X)).\]
This allows to exploit the linear structure of $L^2(\Omega; \mathbb R^d)$ in order to define a differential structure. 
\par\medskip\par
However, the derivative obtained in this way generally depends on the choice of the probability space and on the representative $X$, unless additional structural conditions are imposed. 
Proving that the derivative is well defined and independent of these choices is a delicate matter. 
Moreover, even when a derivative exists in the lifted space, it may not admit a simple or intrinsic representation at the level of probability measures.
\par\medskip\par
On the other hand, relying solely on the metric structure---namely, on the notions of metric slopes and metric derivatives---is largely insufficient in many concrete situations. 
Informally, these metric notions provide directions of steepest descent and can, in a generalized sense, be used to define metric subdifferentials; however, they do not yield a structure comparable to that of linear maps acting on tangent vectors. 
\par\medskip\par
In addition to these difficulties, it must be noticed also that functionals on measures are typically \emph{nonlocal}, in the sense that their values depend on the entire measure rather than on its behavior at a single point. 
As a consequence, any pointwise interpretation of derivatives on the underlying space becomes problematic, since the derivative itself may inherit the same nonlocal character.
\par\medskip\par
In this paper, we introduce and analyze a general concept of \emph{variation} associated with the problem of differentiating a functional $U : \mathscr P(\mathbb T^d) \to \mathbb R$. 
We show that this notion is sufficiently broad to encompass several of the most widely used definitions of derivatives in the Wasserstein space. 
Within this unified framework, we establish that---under natural smoothness assumptions---various existing notions of differentiability in the literature actually coincide. 
\par\medskip\par
The concept of variation introduced in the first part of the paper is further developed in the second part, where it serves as the foundation for defining appropriate notions of sub- and superdifferentials. 
These, in turn, allow us to establish a comparison principle for viscosity solutions of first-order Hamilton--Jacobi equations in the Wasserstein space.
\par\medskip\par
\par\medskip\par
We apply this latter result to characterize the value function associated with a suitable mean field control problem of Bolza type. 
In this framework, a central planner seeks to minimize a Bolza-type cost functional depending on the configurations of a continuum of agents (the \emph{followers}) and a distinguished agent (the \emph{leader}), whose dynamics are coupled through an asymmetric interaction.
\par\medskip\par
The paper is organized as follows. 
In Section~\ref{sec:admvar}, we recall some preliminary definitions and results concerning the Wasserstein space and introduce a suitable notion of admissible variations, together with a comparison to related concepts in the literature. 
Section~\ref{sec:diff} is devoted to the definition of differentiation along variations and to the proof of an equivalence result under appropriate smoothness assumptions. 
In Section~\ref{sec:model}, we present the multi-agent model and formulate the mean field control problem under consideration. 
Finally, in Section~\ref{sec:HJB}, we establish a comparison principle for the Hamilton--Jacobi--Bellman equation associated with the mean field control problem, thereby characterizing the corresponding value function.

\section{Admissible variations on the Wasserstein space.}\label{sec:admvar}

\subsection{Preliminaries} 
Here we recall some basic facts on Wasserstein spaces, referring to \cites{AGS, Villani, Santambrogio} for a more detailed presentation.
\par\medskip\par
Given a complete separable metric space $(X,d)$, denote by $\mathscr P(X)$ the space of Borel probability measures on $X$, which can be identified with a convex subset of $(C^0_b(X))'$, the topological dual of 
real-valued continuous bounded functions on $X$. Given two sets $X_1,X_2$, we denote by $\mathrm{pr}_i:X_1\times X_2\to X_i$, $i=1,2$, the map defined by setting $\mathrm{pr}_i(x_1,x_2)=x_i$, $i=1,2$.
Given another metric space $Y$, a Borel map $f:X\to Y$, and $\mu\in\mathscr P(X)$, the pushforward measure $f\sharp\mu\in\mathscr P(Y)$ is defined by $f\sharp\mu(A)=\mu(f^{-1}(A))$ for every Borel subset of $Y$.
Given $\mu\in\mathscr P(X)$, $\mu'\in\mathscr P(Y)$, a transport plan $\pi$ between $\mu$ and $\mu'$ is an element of $\mathscr P(X\times Y)$
satisfying $\mathrm{pr}_1\sharp\pi=\mu$, $\mathrm{pr}_2\sharp\pi=\mu'$. The set of transport plans between $\mu$ and $\mu'$ is denoted by $\Pi(\mu,\mu')$.
Given $\pi\in\Pi(\mu_1,\mu_2)\subseteq\mathscr P(X\times X)$, $q\ge 1$ we set
\[W^{\pi}_q(\mu_1,\mu_2):=\left[\iint_{X\times X}d^q(x,y)\,d\pi(x,y)\right]^{1/q}.\]
The $q$-Wasserstein distance is defined by \[W_q(\mu_1,\mu_2):=\inf_{\pi\in\Pi(\mu_1,\mu_2)}W^{\pi}_q(\mu_1,\mu_2),\] we know \cite{Villani} that the above minimum is achieved and  the plans $\pi$ realizing the infimum are called \emph{optimal transport plans} between $\mu_1$ and $\mu_2$.
\par
We recall the following version of the desintegration theorem (Theorem 5.3.1 in \cite{AGS}).

\begin{proposition}[Disintegration]\label{thm:disintegration}
Given a measure $\mu\in\mathscr P(X \times Y)$, there exists a Borel family of probability measures $\{\mu_x\}_{x\in X}\subseteq \mathscr P(Y)$,
uniquely defined for $\mathrm{pr}_1\sharp \mu$-a.e. $x\in X$, such that for any Borel map $\varphi:X\times Y\to [0,+\infty]$ we have
\[\int_{X\times Y}\varphi(x,y)\,d\mu(x,y)=\int_X \left[\int_{Y}\varphi(x,y)\,d\mu_x(y)\right]d(\mathrm{pr}_1\sharp \mu)(x).\]
We will write shortly $\mu=(\mathrm{pr}_1\sharp \mu)\otimes \mu_x$.
\end{proposition}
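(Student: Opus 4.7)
The plan is to construct $\{\mu_x\}$ via a Radon--Nikodym/Riesz representation scheme, then verify the integral formula by a monotone class argument. Set $\nu := \mathrm{pr}_1\sharp\mu\in\mathscr P(X)$. For each $\varphi\in C^0_b(Y)$, the set function $A\mapsto\int_{A\times Y}\varphi(y)\,d\mu(x,y)$ defines a signed Borel measure on $X$ which is absolutely continuous with respect to $\nu$ and dominated in total variation by $\|\varphi\|_\infty\,\nu$. The Radon--Nikodym theorem then produces a Borel density $g_\varphi:X\to\mathbb R$ with $|g_\varphi|\le\|\varphi\|_\infty$ $\nu$-a.e., uniquely defined up to $\nu$-null sets.

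The heart of the argument is to promote the family $\{g_\varphi\}$ into a pointwise-defined measure-valued map. One fixes a countable $\mathbb Q$-vector subspace $\mathcal D\subseteq C^0_b(Y)$ containing the constant function $1$ and dense in $C^0_b(Y)$ for the uniform topology on compact subsets of $Y$; by countability of $\mathcal D$, one extracts a single $\nu$-null set $N\subseteq X$ outside of which $\varphi\mapsto g_\varphi(x)$ is $\mathbb Q$-linear, nonnegative on nonnegative elements, normalized by $g_1(x)=1$, and bounded by the supremum norm. Extension by continuity then yields, for every $x\in X\setminus N$, a positive normalized linear functional on $C^0_b(Y)$; invoking the Riesz representation theorem (with tightness ensured by the Polish structure of $Y$ via a countable family of compact approximations) produces a unique $\mu_x\in\mathscr P(Y)$ representing it, and one sets $\mu_x$ arbitrarily on $N$.

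Borel measurability of $x\mapsto\mu_x(B)$ for each Borel $B\subseteq Y$ is checked first for open sets via monotone approximation $g_{\varphi_n}(x)\nearrow\mu_x(B)$ with $\varphi_n\in\mathcal D$, $\varphi_n\nearrow\mathbf 1_B$, and then extended to all Borel sets by the monotone class theorem. The disintegration identity is next proved on tensor-product integrands $\varphi(x,y)=\psi(x)\chi(y)$ directly from the definition of $g_\chi$, and extended to arbitrary nonnegative Borel $\varphi$ by a further monotone class step. Uniqueness of $\{\mu_x\}$ up to $\nu$-null sets reduces to uniqueness of the Radon--Nikodym densities evaluated on the countable determining family $\mathcal D$.

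The main obstacle is the assembly of countably many $\nu$-a.e. statements into a single pointwise-defined probability measure: one must exhibit a universal $\nu$-null exceptional set outside of which $\mathbb Q$-linearity, positivity, normalization, and (in the non-locally-compact setting) the tightness required to invoke Riesz all hold simultaneously. The countability of $\mathcal D$ and the Polish structure underlying $Y$ are what make this synthesis possible.
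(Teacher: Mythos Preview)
The paper does not prove this proposition; it is stated as a known result with a reference to Theorem~5.3.1 in \cite{AGS}. Your sketch follows precisely the classical construction used in \cite{AGS} (Radon--Nikodym densities indexed by test functions, passage to a countable determining class, assembly into a measure via Riesz, extension by monotone class), so there is no genuine divergence in approach.

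One technical point deserves care: for a general Polish space $Y$, the space $C^0_b(Y)$ is not separable in the sup norm, so a countable $\mathcal D$ ``dense for the uniform topology on compact subsets'' need not suffice to represent every positive normalized functional on $C^0_b(Y)$ directly. The standard fix, which \cite{AGS} adopts, is to embed $Y$ homeomorphically into a compact metric space (e.g.\ the Hilbert cube), apply Riesz there where $C^0$ is separable, and then use inner regularity (Ulam's theorem) to show that the resulting measures $\mu_x$ are concentrated on $Y$ for $\nu$-a.e.\ $x$. Your identification of tightness as the main obstacle is correct; making it rigorous requires this compactification step rather than a direct appeal to Riesz on $Y$.
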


In this paper we focus on the Wasserstein space on the torus $\mathscr P(\mathbb T^d)$ equipped with the topology of weak (narrow) convergence of measures 
(which is equivalent to the topology generated by $W_q$ since $\mathbb T^d$ is compact, see \cite{Santambrogio}).

Given a compact interval $I$ of $\mathbb R$, we denote by $\Gamma_I=C^0(I;\mathbb T^d)$ the space of continuous curves defined on $I$ with values in $\mathbb T^d$,
endowed with the usual sup-norm, and define for every $t\in I$ the linear and continuous map $\hat e_t:\Gamma_I\to\mathbb R^d$ by setting $\hat e_t(\gamma)=\gamma(t)$.

Given a family of probability measures $\boldsymbol\mu=\{\mu_t\}_{t\in I}$, and a Borel vector field $v=v_t(x)$ such that 
\begin{equation} \label{eq:intcond}\int_I\int_{\mathbb T^d}|v_t(x)|^q\,d\mu_t(x)\,dt<+\infty, \end{equation}
we say that $\boldsymbol\mu$ is a solution the \emph{continuity equation} driven by $v$ if
\begin{equation} \label{eq:conteq}\partial_t\mu_t+\mathrm{div}(v_t\mu_t)=0\end{equation}
in the sense of distributions. In particular, the map $t\mapsto \mu_t$ is continuous.
\par\medskip\par
According to the \emph{superposition principle} (Theorem 8.2.1 in \cite{AGS}), for every solution of \eqref{eq:conteq} there exists $\boldsymbol\eta\in\mathscr P(\Gamma_{I})$ 
such that $\mu_t=\hat e_t\sharp\boldsymbol\eta$ for all $t\in I$, and $\boldsymbol\eta$ is supported on the integral solutions of $\dot\gamma(t)=v_t\circ\gamma(t)$.
Conversely, given any Borel vector field $v=v_t(\cdot)$, $\boldsymbol\eta\in\mathscr P(\Gamma_{I})$, supported on the integral solutions of $\dot\gamma(t)=v_t\circ\gamma(t)$, and defined $\mu_t=\hat e_t\sharp\boldsymbol\eta$ for all $t\in I$,
we have that if \[\int_I \int_{\mathbb R^d}|v_t(x)|^q\,d\mu_t(x)\,dt<+\infty,\] 
then $\boldsymbol\mu:=\{\mu_t\}$ solves \eqref{eq:conteq}. In this case we will say that $\boldsymbol\eta$ is a representative of $\boldsymbol\mu$ according to the superposition principle.
\par\medskip\par
For any $\mu\in\mathscr P_2(X)$ we define $\displaystyle\mathrm{m}_2(\mu)=\int_{X}|x|^2\,d\mu(x)$.
Let $\mathbb X$ be a metric space, $t_0<t_1<t_2$, $\gamma_1\in C^0([t_0,t_1],\mathbb X)$, $\gamma_2\in C^0([t_1,t_2],\mathbb X)$. 
We define $\gamma_1\oplus\gamma_2:[t_0,t_2]\to \mathbb X$
by setting $$\gamma_1\oplus\gamma_2(t)=\begin{cases}\gamma_1(t),&t\in [t_0,t_1],\\ \gamma_2(t),&t\in ]t_1,t_2].\end{cases}$$
Notice that $\gamma_1\oplus\gamma_2\in C^0([t_0,t_2],\mathbb X)$ if and only if $\gamma_1(t_1)=\gamma_2(t_1)$.

\subsection{Admissible $q$-variations}
Let $q \geq 1$ be fixed. Throughout  the paper we consider the conjugate exponent $p \geq 1$ of $q$, namely the unique $p$ satisfying $\frac1p + \frac1q =1$. 

We introduce now a new concept of variations

\begin{definition}[Admissible variations ]\label{def:adm-var}
Let $\mu\in\mathscr P(\mathbb T^d)$, $T >0$. A Borel family of plans $\boldsymbol\pi:=\{\pi_t\}_{t\in [0,T] }\subseteq \mathscr P(\mathbb T^d\times\mathbb T^d)$ is a \emph{family of admissible $q$-variations} from $\mu$ if 
\begin{itemize}
\item[]
\item[a.] $\mathrm{pr}_1\sharp\pi_t=\mu$ for all $t\in [0,T]$,
\item[]
\item[b.] $\pi_t\rightharpoonup(\mathrm{Id},\mathrm{Id})\sharp\mu$ as $t\to 0^+$,
\item[]
\item[c.] there a constant $C>0$ such that $\displaystyle\limsup_{t\to 0^+}\dfrac{W^{\pi_t}_q(\mu,\mathrm{pr}_2\sharp\pi_t)}{t}\le C$.
\end{itemize}
\end{definition}

\begin{remark}
Condition a. and b. of Definition \ref{def:adm-var} expresses the natural fact that the base point $\mu$ is fixed in the limit operation, and that the amplitude of the variation 
tends to vanish in a suitable weak sense as $t\to 0^+$. Condition c. states that the speed of this latter process is requested to occurr at a linear rate, 
as in the classical directional derivative.
\end{remark}

The following stability properties of variations are a straightforward consequence of Definition \ref{def:adm-var}.
\begin{proposition}  
Let $\{\pi_t\}_{t\in [0,T] }$   and $\{\pi'_t\}_{t\in [0,T] }$ be two  families of admissible $q$-variations from $\mu$. Then
\begin{itemize}
\item For any $ s \in [0,1] $,  $\{s \pi _t + (1-s) \pi'_t\}_{t\in [0,T] }$  is a family of admissible $q$-variations from $\mu$.
\item Let $I \cup J =[0,T] $ be a partition of $[0,T]$. The family  $\{1_I(t) \pi _t + 1_J(t) \pi'_t\}_{t\in [0,T] }$ is a family of admissible $q$-variations from $\mu$.
\end{itemize}
\end{proposition}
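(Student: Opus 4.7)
The plan is to verify directly the three defining properties (a), (b), (c) of Definition \ref{def:adm-var} for each of the two constructed families. The key structural facts are that the pushforward $\mathrm{pr}_1\sharp$ is linear in the measure, that the pairing $\pi\mapsto \int f\,d\pi$ against a test function $f\in C^0_b(\mathbb T^d\times\mathbb T^d)$ is linear, and that the quantity
\[
\bigl(W_q^{\pi}(\mu,\mathrm{pr}_2\sharp\pi)\bigr)^q=\iint_{\mathbb T^d\times\mathbb T^d} d^q(x,y)\,d\pi(x,y)
\]
is also linear in $\pi$.

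For the convex combination $\pi^s_t:=s\pi_t+(1-s)\pi'_t$, property (a) follows at once from $\mathrm{pr}_1\sharp\pi^s_t=s\mu+(1-s)\mu=\mu$. For (b) I would test against any $f\in C^0_b(\mathbb T^d\times\mathbb T^d)$ and write $\int f\,d\pi^s_t=s\int f\,d\pi_t+(1-s)\int f\,d\pi'_t$; both summands converge to $\int f\,d(\mathrm{Id},\mathrm{Id})\sharp\mu$, so $\pi^s_t\rightharpoonup(\mathrm{Id},\mathrm{Id})\sharp\mu$. For (c), the linearity in $\pi$ noted above yields
\[
\bigl(W_q^{\pi^s_t}(\mu,\mathrm{pr}_2\sharp\pi^s_t)\bigr)^q=s\bigl(W_q^{\pi_t}(\mu,\mathrm{pr}_2\sharp\pi_t)\bigr)^q+(1-s)\bigl(W_q^{\pi'_t}(\mu,\mathrm{pr}_2\sharp\pi'_t)\bigr)^q,
\]
from which, dividing by $t^q$ and applying the $\limsup$ bounds with constants $C,C'$ for the two original families, one obtains a uniform constant $[sC^q+(1-s)(C')^q]^{1/q}$ for the combination.

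For the partition construction $\sigma_t:=1_I(t)\pi_t+1_J(t)\pi'_t$, the essential observation is that since $I\cap J=\emptyset$, at each fixed $t\in[0,T]$ the plan $\sigma_t$ equals either $\pi_t$ or $\pi'_t$. Properties (a) and (c) therefore reduce pointwise to the corresponding properties of the two original families; in particular the $\limsup$ in (c) is bounded by $\max\{C,C'\}$. Borel measurability of $t\mapsto\sigma_t$ follows from Borel measurability of $1_I,1_J$ and of the two component families.

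The only point requiring a brief argument is property (b) in the partition case, since the partition could have $0$ as an accumulation point of both $I$ and $J$. This is not a genuine obstacle: for any $f\in C^0_b$ one writes $\int f\,d\sigma_t=1_I(t)\int f\,d\pi_t+1_J(t)\int f\,d\pi'_t$, and each of the two integrals converges, as $t\to 0^+$ within its respective set, to the common limit $\int f\,d(\mathrm{Id},\mathrm{Id})\sharp\mu$; using $1_I(t)+1_J(t)=1$ the convergence holds on the whole right neighborhood of $0$. No deeper ingredient is needed, so the entire proposition amounts to exploiting the linear structure of the space of signed measures together with the linearity of the cost functional in $\pi$.
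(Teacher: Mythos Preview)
Your proposal is correct and matches the paper's stance exactly: the paper does not write out a proof but simply states that the proposition is ``a straightforward consequence of Definition~\ref{def:adm-var}'', and your direct verification of properties (a), (b), (c) via the linearity of $\mathrm{pr}_1\sharp$, of integration against test functions, and of $\pi\mapsto\iint d^q(x,y)\,d\pi$ is precisely that straightforward check.
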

Now we will show that this notion of variations is stable by "perturbations by small plans" (see proposition \ref{prop:perturb} below).  Let us first recall the following composition of plans with a common margin.
\begin{definition}[Composition of plans]
Let $\pi_{12}\in\mathscr P(X_1\times X_2)$, $\pi_{23}\in\mathscr P(X_2\times X_3)$ be satisfying $\mathrm{pr}_2\sharp\pi_{12}=\mathrm{pr}_1\sharp\pi_{23}$.
Set $\mu_i\in \mathscr P(X_i)$, $i=1,2,3$, by $\mu_1=\mathrm{pr}_1\sharp\pi_{12}$, $\mu_2=\mathrm{pr}_2\sharp\pi_{12}$, $\mu_3=\mathrm{pr}_2\sharp\pi_{23}$.
Then we define $\pi_{12}\circ \pi_{23}\in\mathscr P(X_1\times X_3)$ by letting for all $\varphi\in C^0_b(X_1\times X_3)$
\[\iint_{X_1\times X_3}\varphi(x_1,x_3)\,d(\pi_{12}\circ \pi_{23})(x_1,x_3)=\int_{X_2}\left[\iint_{X_1\times X_3}\varphi(x_1,x_3)\,d\pi^{x_2}_{12}(x_1)d\pi^{x_2}_{23}(x_3)\right]\,d\mu_2(x_2),\]
where we disintegrate $\pi_{12}=\mu_2\otimes\pi^{x_2}_{12}$ and $\pi_{23}=\mu_2\otimes \pi^{x_2}_{23}$.
\end{definition}

\begin{remark} \label{rem:comp}
Notice that we have $\mathrm{pr}_i\sharp(\pi_{12}\circ \pi_{23})=\mu_i$, $i=1,3$, and
\[W_p^{\pi_{12}\circ\pi_{23}}(\mu_1,\mu_3)\le W_p^{\pi_{12}}(\mu_1,\mu_2)+W_p^{\pi_{23}}(\mu_2,\mu_3).\]
\end{remark}

As an immediate consequence of Remark \ref{rem:comp}, we deduce the following stability result of variations
\begin{proposition}  \label{prop:perturb}
Let $\{\pi_t\}_{t\in [0,T]}$ be a family of admissible
$q$-variations from $\mu\in\mathscr P(\mathbb T^d)$.
Suppose that $\{\hat \pi_t\}_{t\in [0,T]}\subseteq \mathscr P(\mathbb
T^d\times\mathbb T^d)$ satisfies
\begin{align*}
\mathrm{pr}_2\sharp\pi_t=\mathrm{pr}_1\sharp\hat \pi_t\textrm{ for all
}t\in [0,T],&&\lim_{t\to
0^+}\dfrac{W^{\hat\pi_t}(\mathrm{pr}_1\sharp\hat
\pi_t,\mathrm{pr}_2\sharp\hat \pi_t)}{t}=0.
\end{align*}
Then $\{\pi_t\circ\hat\pi_t\}_{t\in [0,T]}$ is a family of admissible
$q$-variations from $\mu\in\mathscr P(\mathbb T^d)$.
\end{proposition}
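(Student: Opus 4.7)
The plan is to verify, in turn, the three defining properties (a), (b), (c) of Definition \ref{def:adm-var} for the family $\{\pi_t\circ\hat\pi_t\}_{t\in[0,T]}$. Two of them are essentially immediate from Remark \ref{rem:comp}; the weak convergence (b) is the only point requiring a small argument.

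For property (a), I would simply observe that by Remark \ref{rem:comp} we have $\mathrm{pr}_1\sharp(\pi_t\circ\hat\pi_t)=\mathrm{pr}_1\sharp\pi_t=\mu$ for every $t\in[0,T]$. For property (c), the triangle-like inequality in Remark \ref{rem:comp} (applied with the exponent $q$) yields
\[
W_q^{\pi_t\circ\hat\pi_t}(\mu,\mathrm{pr}_2\sharp\hat\pi_t)\le W_q^{\pi_t}(\mu,\mathrm{pr}_2\sharp\pi_t)+W_q^{\hat\pi_t}(\mathrm{pr}_1\sharp\hat\pi_t,\mathrm{pr}_2\sharp\hat\pi_t),
\]
so dividing by $t$ and passing to the limsup, the first term is bounded by the constant $C$ coming from admissibility of $\{\pi_t\}$ and the second vanishes by hypothesis, giving $\limsup_{t\to0^+}W_q^{\pi_t\circ\hat\pi_t}(\mu,\mathrm{pr}_2\sharp(\pi_t\circ\hat\pi_t))/t\le C$.

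For property (b), which I expect to be the main (though modest) obstacle, I would use compactness of $\mathbb T^d\times\mathbb T^d$ together with a limit-point identification. The family $\{\pi_t\circ\hat\pi_t\}$ is automatically tight, so it suffices to show that the diagonal plan $(\mathrm{Id},\mathrm{Id})\sharp\mu$ is the only possible narrow limit. Let $\pi^*$ be any limit point along a sequence $t_n\to 0^+$. The first marginal is always $\mu$ by (a), hence $\mathrm{pr}_1\sharp\pi^*=\mu$. For the second marginal, note that $\mathrm{pr}_2\sharp\pi_t\rightharpoonup\mu$ by admissibility of $\{\pi_t\}$, and $W_q(\mathrm{pr}_1\sharp\hat\pi_t,\mathrm{pr}_2\sharp\hat\pi_t)\le W_q^{\hat\pi_t}(\mathrm{pr}_1\sharp\hat\pi_t,\mathrm{pr}_2\sharp\hat\pi_t)=o(t)$ by the hypothesis on $\{\hat\pi_t\}$, so $\mathrm{pr}_2\sharp\hat\pi_t\rightharpoonup\mu$ and therefore $\mathrm{pr}_2\sharp\pi^*=\mu$. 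Finally, from property (c) already established (with the bound $Ct+o(t)$), the inequality
\[
\iint_{\mathbb T^d\times\mathbb T^d}d^q(x_1,x_3)\,d(\pi_t\circ\hat\pi_t)(x_1,x_3)=\bigl(W_q^{\pi_t\circ\hat\pi_t}(\mu,\mathrm{pr}_2\sharp\hat\pi_t)\bigr)^q\longrightarrow 0
\]
combined with lower semicontinuity of $\pi\mapsto\iint d^q\,d\pi$ under narrow convergence (since $d^q$ is continuous and nonnegative) gives $\iint d^q(x_1,x_3)\,d\pi^*(x_1,x_3)=0$, so $\pi^*$ is concentrated on the diagonal. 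A plan on the diagonal with first marginal $\mu$ must coincide with $(\mathrm{Id},\mathrm{Id})\sharp\mu$; uniqueness of the limit point then yields $\pi_t\circ\hat\pi_t\rightharpoonup(\mathrm{Id},\mathrm{Id})\sharp\mu$, which is exactly (b).

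The only subtle point is handling the second marginal of the limit (linking the admissible variation $\pi_t$ to the perturbation $\hat\pi_t$ via the common marginal); once this is done, the identification on the diagonal is standard and the rest is bookkeeping from Remark \ref{rem:comp}.
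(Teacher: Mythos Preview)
Your proposal is correct and follows the same approach as the paper, which simply declares the result an ``immediate consequence of Remark \ref{rem:comp}'' without further detail. Your verification of (a) and (c) via Remark \ref{rem:comp} is exactly what the paper has in mind, and your compactness/limit-point argument for (b) cleanly fills in the one step the paper leaves implicit; the only (harmless) redundancy is the separate check of the second marginal of $\pi^*$, since diagonal concentration together with $\mathrm{pr}_1\sharp\pi^*=\mu$ already forces $\pi^*=(\mathrm{Id},\mathrm{Id})\sharp\mu$.
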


\subsection{Comparison with other type of variations}

We first recall other concept of variations, some of which appeared already in the literature
\begin{definition}\label{def:adm-var-ex}
We will consider the following class of variations.
\begin{enumerate}
\item \emph{Transport map variations (cf e.g. \cite{CQ})}: given a Borel vector field $\phi(\cdot)$ and $\mu\in\mathscr P(\mathbb T^d)$, we define $\pi_t=(\mathrm{Id},(\mathrm{Id}+t\phi))\sharp\mu$.
\item[]
\item \emph{Lagrangian variations}: given  $\mu\in\mathscr P(\mathbb T^d)$ and a measure $\boldsymbol\eta\in\mathscr P(\Gamma_I)$ with $ \hat e_0 \sharp \eta = \mu $, we define $\pi_t=(\hat e_0,\hat e_t)\sharp\boldsymbol\eta$.
\item[]
\item \emph{Flat variations (cf e.g. \cite{CDLL})}: given $\pi\in\mathscr P(\mathbb T^d\times\mathbb T^d)$, set $\mu:=\mathrm{pr}_1\sharp\pi$, we define  
$\pi_t=(\mathrm{Id},\mathrm{Id})\sharp\mu+t[\pi-(\mathrm{Id},\mathrm{Id})\sharp\mu]$. 
\item[]

\item \emph{Eulerian variations}: let $\boldsymbol\mu=\{\mu_t\}_{t\in I}$ be a solution of the continuity equation \eqref{eq:conteq} related to a Borel vector field satisfying the integrability condition \eqref{eq:intcond},
and let $\boldsymbol\eta\in\mathscr P(\Gamma_I)$ be its representative according to the superposition principle. Then we define $\pi_t=(\hat e_0,\hat e_t)\sharp\boldsymbol\eta$. 
\end{enumerate}
\end{definition}

Concerning the less immediate relationships between these family of variations, the main ingredient is Theorem 8.3.1 in \cite{AGS}, linking absolutely continuous curves in the space of probability measures and
solutions of the continuity equation.

\begin{lemma}\label{lemma:intass}\leavevmode\par
\begin{enumerate}
\item Eulerian variations agree with the Lagrangian variations constructed on the representative of solutions of the continuity equations.
\item Transport map variations are a special case of Lagrangian variations, and if $\phi\in L^{p}_{\mu}(\mathbb T^d)$, then the related transport map variation is a special case of Eulerian variations.
\item Flat variations are a special case of Eulerian variations.
\end{enumerate}
\end{lemma}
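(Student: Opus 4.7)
The plan is to handle the three claims independently, in each case by exhibiting an explicit Lagrangian measure $\boldsymbol\eta\in\mathscr P(\Gamma_I)$ and verifying the admissibility conditions of Definition~\ref{def:adm-var}.

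Part (1) is essentially definitional: setting $\pi_t=(\hat e_0,\hat e_t)\sharp\boldsymbol\eta$ for the superposition representative $\boldsymbol\eta$ is literally the formula of a Lagrangian variation for that $\boldsymbol\eta$, so the task reduces to verifying admissibility. Condition (a) is immediate from $\hat e_0\sharp\boldsymbol\eta=\mu_0=\mu$; condition (b) follows from dominated convergence since $\hat e_t\to\hat e_0$ pointwise on $\Gamma_I$ (using narrow continuity of $t\mapsto\mu_t$); and condition (c) is obtained from the estimate
\[
W_q^{\pi_t}(\mu,\mathrm{pr}_2\sharp\pi_t)^q\le\int_{\Gamma_I}|\gamma(t)-\gamma(0)|^q\,d\boldsymbol\eta(\gamma)\le t^{q-1}\int_0^t\int|v_s|^q\,d\mu_s\,ds,
\]
which combines H\"older's inequality applied to $|\gamma(t)-\gamma(0)|\le\int_0^t|v_s(\gamma(s))|\,ds$ with the superposition identity and the integrability \eqref{eq:intcond}.

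For part (2), given a Borel vector field $\phi$, I define $\Phi:\mathbb T^d\to\Gamma_I$ by $\Phi(x)(s):=x+s\phi(x)$ (modulo $\mathbb T^d$) and set $\boldsymbol\eta:=\Phi\sharp\mu$; then $(\hat e_0,\hat e_t)\sharp\boldsymbol\eta=(\mathrm{Id},\mathrm{Id}+t\phi)\sharp\mu=\pi_t$, giving the Lagrangian representation. Under the stated integrability of $\phi$, the curves $\Phi(x)$ are integral curves of the Eulerian vector field $v_s$ defined $\mu_s$-a.e.\ by $v_s(x+s\phi(x))=\phi(x)$, and a change of variables yields $\int|v_s|^q\,d\mu_s=\int|\phi|^q\,d\mu$ uniformly in $s$, so that \eqref{eq:intcond} is satisfied and $\{\mu_s:=(\mathrm{Id}+s\phi)\sharp\mu\}$ is a continuity-equation solution with $\boldsymbol\eta$ as its superposition representative.

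Part (3) is the most delicate. Given $\pi=\mu\otimes\pi^x$, I aim to realize $\pi_t=(1-t)(\mathrm{Id},\mathrm{Id})\sharp\mu+t\pi$ as $(\hat e_0,\hat e_t)\sharp\boldsymbol\eta$ for a measure $\boldsymbol\eta$ concentrated on continuous paths. The guiding idea is to enlarge the probability space with an independent uniform switching time $R\sim\mathrm{Unif}([0,T])$ and, for each $(x,y)\sim\pi$, to consider paths that sit at $x$ while $s<R$ and at $y$ while $s>R$; the obvious obstruction is that such paths are discontinuous and hence do not belong to $\Gamma_I$. I plan to circumvent this by regularizing the transition over a small window of length $\epsilon>0$ to obtain a family $\boldsymbol\eta_\epsilon\in\mathscr P(\Gamma_I)$ whose $(\hat e_0,\hat e_t)$-marginals converge to $\pi_t$ as $\epsilon\to 0$, and then invoking Theorem~8.3.1 in \cite{AGS} together with a tightness argument to extract a limiting continuity-equation solution whose superposition representative realizes the flat variation exactly. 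The main obstacle will be precisely this passage to the limit: I must preserve the integrability condition \eqref{eq:intcond} through the regularization while reconciling the required continuity of paths with the essentially instantaneous mass transfer built into the flat interpolation.
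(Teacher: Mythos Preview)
For part~(1) and the Lagrangian half of part~(2) your argument is essentially the paper's (you additionally verify the admissibility conditions (a)--(c), which in the paper belong to the subsequent Proposition rather than to this lemma, but that does no harm).

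The Eulerian half of part~(2) has a genuine gap: defining $v_s$ pointwise by $v_s(x+s\phi(x))=\phi(x)$ presupposes that $x\mapsto x+s\phi(x)$ is $\mu$-essentially injective, which fails for generic Borel $\phi$ (take $\mu=\tfrac12(\delta_0+\delta_1)$ on $\mathbb T^1$ with $\phi(0)=1$, $\phi(1)=-1$; at $s=1/2$ both atoms land on the same point with conflicting velocities). The paper sidesteps this entirely: it first proves the Lipschitz estimate $W_p(\mu_t,\mu_s)\le|t-s|\,\|\phi\|_{L^p_\mu}$ directly from the transport plan $(\hat e_t,\hat e_s)\sharp\boldsymbol\eta$, and then invokes Theorem~8.3.1 of \cite{AGS} to produce \emph{some} Borel field $v_t$ with $\|v_t\|_{L^p_{\mu_t}}\le\|\phi\|_{L^p_\mu}$. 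No explicit formula for $v_t$ is needed.

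Part~(3) is where your plan diverges sharply from the paper and hits a real obstruction. Your regularized jump paths travel at speed $|y-x|/\epsilon$ on a window of length $\epsilon$, so for $q>1$ one has $\int_I\int|v_s|^q\,d\mu_s\,ds\sim\epsilon^{1-q}\to\infty$; condition~\eqref{eq:intcond} cannot survive the limit, and the limiting $\boldsymbol\eta$ is concentrated on discontinuous paths, hence not in $\mathscr P(\Gamma_I)$. The paper's argument is completely different and much shorter: for any $1$-Lipschitz $\varphi$ one computes
\[
\int_{\mathbb T^d}\varphi\,d(\mu_t-\mu_s)=(t-s)\int_{\mathbb T^d}\varphi\,d(\nu-\mu),
\]
takes the supremum over such $\varphi$ to obtain $W_1(\mu_t,\mu_s)\le|t-s|\,W_1(\mu,\nu)$ by Kantorovich duality, and again invokes Theorem~8.3.1 of \cite{AGS}. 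No explicit Lagrangian construction is attempted; the paper simply shows that the curve $t\mapsto\mu_t=\mathrm{pr}_2\sharp\pi_t$ solves a continuity equation and lets the superposition principle supply $\boldsymbol\eta$.
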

\begin{proof}\leavevmode\par
1. Trivial from the definition.
\par\medskip\par
2. Given a Borel map $\phi:\mathbb T^d\to\mathbb T^d$ and $\mu\in\mathscr P(\mathbb T^d)$, we first notice that the map $(x,v)\mapsto \gamma_{x,v}$, 
defined by $\gamma_{x,v}(t)=x+tv$ for all $t\in I$, is continuous from $\mathbb T^d\times\mathbb T^d$ to $\Gamma_I$,
and that the map $x\mapsto (x,\phi(x))$ is Borel. Thus we have that $x\mapsto \gamma_{x,\phi(x)}$ is Borel, 
and we can define $\boldsymbol\eta=\mathrm{pr}_2\sharp(\mu\otimes\delta_{\gamma_{x,\phi(x)}})$,  i.e., for all $\varphi\in C^0_b(\Gamma_I)$
\[\int_{\Gamma_I}\varphi(\gamma)\,d\boldsymbol\eta(\gamma)=\int_{\mathbb T^d}\varphi(\gamma_{x,\phi(x)})\,d\mu(x).\]
With this definition, we have $(\hat e_0,\hat e_t)\sharp\boldsymbol\eta=(\mathrm{Id},\mathrm{Id}+t\phi)\sharp\mu$ for all $t\in I$.
Furthermore, set $\mu_t=\hat e_t\sharp\boldsymbol\eta$, we have 
\[W_p(\mu_t,\mu_s)\le\left(\int_{\mathbb T^d\times\mathbb R^d}|x-y|^p\,d[(\hat e_t,\hat e_s)\sharp\boldsymbol\eta](x,y)\right)^{1/p}=|t-s|\left(\int_{\mathbb T^d}|\phi(x)|\,d\mu(x)\right)^{1/p}.\]
In particular, if $\phi\in L^{p}_{\mu}(\mathbb T^d)$ we have that $t\mapsto\mu_t$ is $W_p$-Lipschitz continuous with constant less than $\|\phi\|_{L^p_{\mu}}$.
According to Theorem 8.3.1 in \cite{AGS}, we have that there exists a Borel vector field $v=v_t(\cdot)$ such that $\boldsymbol\mu=\{\mu_t\}_{t\in I}$ solves the continuity 
equation \eqref{eq:conteq} driven by $v$. Furthermore, $\|v_t\|_{L^p_{\mu_t}}\le \mathrm{Lip}(\boldsymbol\mu)\le \|\phi\|_{L^p_{\mu}}$ for a.e. $t\in I$, so the integrability condition \eqref{eq:intcond} is satisfied.
\par\medskip\par
3. Let $\pi\in\Pi(\mu,\nu)$ and define $\pi_t=(\mathrm{Id},\mathrm{Id})\sharp\mu+t[\pi-(\mathrm{Id},\mathrm{Id})\sharp\mu]$, $\mu_t=\mathrm{pr}_2\sharp\pi_t$, $t\in I$, $\boldsymbol\mu=\{\mu_t\}_{t\in I}$.
Then for every $\varphi\in\mathrm{Lip}(\mathbb T^d)$ it holds
\begin{align*}
\int_{\mathbb T^d} \varphi(y)\,d(\mathrm{pr}_2\sharp \pi_t)(y)&-\int_{\mathbb T^d} \varphi(y)\,d(\mathrm{pr}_2\sharp \pi_s)(y)=\\
=&\iint_{\mathbb T^d\times\mathbb T^d} \varphi(y)\,d\pi_t(x,y)-\iint_{\mathbb T^d\times\mathbb T^d} \varphi(y)\,d\pi_s(x,y)\\
=&(t-s)\left[\iint_{\mathbb T^d\times\mathbb T^d}\varphi(y)\,d\pi(x,y)-\iint_{\mathbb T^d\times\mathbb T^d} \varphi(y)\,d[(\mathrm{Id},\mathrm{Id})\sharp\mu](x,y)\right]\\
=&(t-s)\left[\int_{\mathbb T^d}\varphi(y)\,d\nu(y)-\int_{\mathbb T^d\times\mathbb T^d} \varphi(y)\,d\mu(y)\right].
\end{align*}
By passing to the supremum on $\varphi$ satisfying $\mathrm{Lip}\,\varphi\le 1$, and recalling Kantorovich duality (see e.g. Theorem 5.1 in \cite{Santambrogio}), we have
\[W_1(\mu_t,\mu_s)\le |t-s| W_1(\mu,\nu).\]
In particular, the curve $t\mapsto \mu_t$ is Lipschitz continuous with respect to $W_1$-metric, with $\mathrm{Lip}(\boldsymbol\mu)\le W_1(\mu,\nu)$. 
By Theorem 8.3.1 in \cite{AGS}, we have that there exists a Borel vector field $v=v_t(\cdot)$
such that $\boldsymbol\mu=\{\mu_t\}_{t\in I}$ solves the continuity equation \eqref{eq:conteq} driven by $v$. Furthermore, $\|v_t\|_{L^1_{\mu_t}}\le \mathrm{Lip}(\boldsymbol\mu)\le W_1(\mu,\nu)$ for a.e. $t\in I$,
so the integrability condition \eqref{eq:intcond} is satisfied.
\end{proof}

The following proposition shows that all the variations considered in Definition \ref{def:adm-var-ex} under suitable assumptions 
share the essential properties requested in the Definition \ref{def:adm-var} of admissible variations.

\begin{proposition}
The following variations are admissible variations according to Definition \ref{def:adm-var}
\begin{enumerate}
\item Transport map variations generated by $\phi\in L^q_{\mu}(\mathbb T^d)$;
\item Flat variations generated by $\pi\in \mathscr P(\mathbb T^d\times\mathbb T^d)$; 
\item Eulerian variations.
\end{enumerate}
\end{proposition}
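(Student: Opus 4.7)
The plan is to verify conditions (a), (b), (c) of Definition~\ref{def:adm-var} for each of the three families. Condition (a) (first marginal equal to $\mu$) is immediate by construction in every case. Condition (b) (narrow convergence to $(\mathrm{Id},\mathrm{Id})\sharp\mu$) reduces to dominated convergence using the compactness of $\mathbb T^d$: for the transport map case, $\varphi(x,x+t\phi(x))\to\varphi(x,x)$ pointwise for $\varphi\in C^0_b$; for flat variations, $\pi_t$ converges to the diagonal even in total variation at rate $O(t)$; for Eulerian variations, $(\hat e_0(\gamma),\hat e_t(\gamma))\to(\hat e_0(\gamma),\hat e_0(\gamma))$ along $\boldsymbol\eta$-a.e.\ continuous $\gamma$. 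The content of the proposition lies in the linear-rate condition (c), and I would organise the three cases in order of increasing subtlety.

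For transport map variations a direct change of variables gives $W^{\pi_t}_q(\mu,\mathrm{pr}_2\sharp\pi_t)^q=\int_{\mathbb T^d}|t\phi(x)|^q\,d\mu=t^q\|\phi\|^q_{L^q_\mu}$, so (c) holds with $C=\|\phi\|_{L^q_\mu}$ whenever $\phi\in L^q_\mu(\mathbb T^d)$.

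For Eulerian variations I would exploit the Lagrangian representative: since $\boldsymbol\eta$-a.e.\ curve $\gamma$ integrates $\dot\gamma=v_s\circ\gamma$, Jensen's inequality yields $|\gamma(t)-\gamma(0)|^q\le t^{q-1}\int_0^t|v_s(\gamma(s))|^q\,ds$. Integrating against $\boldsymbol\eta$, applying Fubini, and using $\hat e_s\sharp\boldsymbol\eta=\mu_s$ produces
\[
W^{\pi_t}_q(\mu,\mathrm{pr}_2\sharp\pi_t)^q\le t^{q-1}\int_0^t\int_{\mathbb T^d}|v_s(x)|^q\,d\mu_s(x)\,ds,
\]
so that $W^{\pi_t}_q/t$ stays bounded as $t\to 0^+$ provided $s\mapsto\|v_s\|^q_{L^q_{\mu_s}}$ is essentially bounded near zero, which is the natural quantitative strengthening of the integrability condition \eqref{eq:intcond}.

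For flat variations the situation is more delicate: the direct computation $W^{\pi_t}_q(\mu,\mathrm{pr}_2\sharp\pi_t)^q=t\int_{\mathbb T^d\times\mathbb T^d}|x-y|^q\,d\pi$ produces only a $t^{1/q}$ rate, which is linear only for $q=1$. My plan is therefore to bypass the algebraic $\pi_t$ and invoke Lemma~\ref{lemma:intass}(3), which recasts any flat family as an Eulerian variation built on the $W_1$-Lipschitz curve $t\mapsto\mathrm{pr}_2\sharp\pi_t$ together with the vector field supplied by Theorem~8.3.1 in \cite{AGS}; the Eulerian case already handled then applies. This passage through the continuity-equation representation is the main obstacle, since the metric linear-rate requirement genuinely cannot be met by the raw linear interpolation for $q>1$, so the argument cannot proceed purely combinatorially.
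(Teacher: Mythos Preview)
Your handling of transport-map and Eulerian variations matches the paper's: direct substitution for (1), and H\"older along the characteristic curves for (3). For flat variations, however, your plan has a genuine gap. Lemma~\ref{lemma:intass}(3) only establishes that the curve $t\mapsto\mathrm{pr}_2\sharp\pi_t$ solves a continuity equation; the superposition principle then supplies a representative $\boldsymbol\eta$ and an associated family of Eulerian plans $(\hat e_0,\hat e_t)\sharp\boldsymbol\eta$, but these are in general \emph{different} plans from the flat $\pi_t$ --- they share marginals, not the coupling. Condition~(c) of Definition~\ref{def:adm-var} concerns the specific plan $\pi_t$, so verifying it for the Eulerian representative does not transfer back to the flat family. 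There is also a secondary obstacle: the driving field produced by Theorem~8.3.1 in \cite{AGS} is controlled only in $L^1$ via the $W_1$-Lipschitz bound from Lemma~\ref{lemma:intass}(3), which does not plug into the $L^q$ Eulerian estimate you wrote without further work.

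The paper does not take your detour; it performs exactly the direct computation you rejected, obtaining $\frac{1}{t}W^{\pi_t}_q(\mu,\mathrm{pr}_2\sharp\pi_t)=t^{1/q-1}W^\pi_q(\mu,\mathrm{pr}_2\sharp\pi)$ and recording the result as $t^{1/p}W^\pi_q(\mu,\mathrm{pr}_2\sharp\pi)$. Your observation that $t^{1/q-1}=t^{-1/p}$ is correct, so the paper's exponent carries a sign error and the claimed bound does not follow for $q>1$. In short: your diagnosis of the obstacle is accurate, your proposed cure does not work because it replaces the plan rather than merely the marginal, and the paper's own argument suffers from precisely the deficiency you identified.
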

\begin{proof}
According to Lemma \ref{lemma:intass}, it would be enough to prove $(3)$, however we provide different (and more direct) proof also for $(1)$ and $(2)$.
Without loss of generality, assume that $I=[0,1]$.
In all the three considered cases, properties a. and b. of Definition \ref{def:adm-var} are immediate,
in particular for $t\in ]0,1]$ sufficiently small it holds $W^{\pi_t}_q(\mu,\mu_t)\le 1$.
Concerning property c., set $\mu_t=\mathrm{pr}_2\sharp\mu$, and $1\le p\le \infty$ such that $1/p+1/q=1$, we have
\begin{enumerate}
\item in the case of transport map variations, for any $t\in ]0,1]$ it holds
\[\dfrac{1}{t}W^{\pi_t}_q(\mu,\mu_t)=\dfrac{1}{t}\left[\iint_{\mathbb T^d\times\mathbb T^d}|t\phi(x)|^q\,d\mu(x)\right]^{1/q}=\|\phi\|_{L^q_\mu}.\]
\item in the case of flat variations, we have
\begin{align*}
\dfrac{1}{t}W^{\pi_t}_q(\mu_1,\mu_2)=&\dfrac{1}{t}\left[\iint_{\mathbb T^d\times\Gamma_{[0,1]}}\left|y-x\right|^q\,d\pi_t(x,y)\right]^{1/q}
=\dfrac{t^{1/q}}{t}\left[\iint_{\mathbb T^d\times\Gamma_{[0,1]}}\left|y-x\right|^q\,d\pi(x,y)\right]^{1/q}\\ =&t^{1/p} W^\pi_q(\mu,\mu').
\end{align*}
\item in the case of Eulerian variation generated by a solution $\boldsymbol\mu=\{\mu_t\}_{t\in I}$ of the continuity equation \eqref{eq:conteq} driven by a Borel vector field $v$
satisfying \eqref{eq:intcond}, we have for all $t\in ]0,1]$ 
\begin{align*}
\dfrac{W^{\pi_t}_q(\mu,\mu_t)}{t}=&\dfrac{1}{t}\left[\iint_{\mathbb T^d\times\Gamma_{[0,1]}}\left|\gamma(t)-\gamma(0)\right|^q\,d\boldsymbol\eta(\gamma)\right]^{\frac{1}{q}}
=\dfrac{1}{t}\left[\iint_{\mathbb T^d\times\Gamma_{[0,1]}}\left|\int_0^t |v_s(\gamma(s))|\,ds\right|^q\,d\boldsymbol\eta(\gamma)\right]^{\frac{1}{q}}\\
\le&\dfrac{1}{t}\left[\iint_{\mathbb T^d\times\Gamma_{[0,1]}}\left|\left(\int_0^1 |v_s(\gamma(s))|^q\,ds\right)^{\frac{1}{q}} t^{\frac{1}{p}}\right|^q\,d\boldsymbol\eta(\gamma)\right]^{\frac{1}{q}}\\
=&\dfrac{t^{\frac{1}{p}}}{t}\left[\int_0^1 \iint_{\mathbb T^d\times\Gamma_{[0,1]}}|v_s(\gamma(s))|^q\,d\boldsymbol\eta(\gamma)\,ds\right]^{\frac{1}{q}}
=\dfrac{t^{\frac{1}{p}}}{t}\left[\int_0^1 \int_{\mathbb T^d}|v_s(x)|^q\,d\mu_t(x)\,ds\right]^{\frac{1}{q}}.
\end{align*}
\end{enumerate}
Thus in all the considered cases, property c. is satisfied. The proof is complete.
\end{proof}

We show now that Lagrangian variations are admissible according to Definition \ref{def:adm-var} under some conditions.

\begin{lemma}
Let $\boldsymbol\xi\in\mathscr P(\Gamma_I)$ be supported on $AC$ curves, and set $\mu_t=\hat e_t\sharp\boldsymbol\xi$.
Suppose that $\tau\mapsto \int_{\Gamma_I}|\dot\gamma|(\tau)\,d\boldsymbol\xi(\gamma)$ belongs to $L^1(I)$. Then, defined $\mu_t=\hat e_t\sharp\boldsymbol\xi$ for all $t\in I$,
there exists a Borel vector field $v=v_t(x)$ satisfying $\eqref{eq:intcond}$ such that $\boldsymbol\mu=\{\mu_t\}_{t\in I}$ solves \eqref{eq:conteq}.
In particular, the Lagrangian variation generated by $\boldsymbol\xi$ is actually an Eulerian variation.
\end{lemma}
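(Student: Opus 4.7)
The plan is to construct the velocity field $v$ explicitly by a conditional-expectation procedure. For each $t\in I$, since $\hat e_t\colon\Gamma_I\to\mathbb T^d$ is Borel and $\hat e_t\sharp\boldsymbol\xi=\mu_t$, I would disintegrate $\boldsymbol\xi=\mu_t\otimes\boldsymbol\xi^x_t$, with $\boldsymbol\xi^x_t$ concentrated on $\{\gamma\in\Gamma_I:\gamma(t)=x\}$, and set
\[v_t(x):=\int_{\Gamma_I}\dot\gamma(t)\,d\boldsymbol\xi^x_t(\gamma),\]
namely, the mean velocity at time $t$ among the curves of $\boldsymbol\xi$ passing through $x$. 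Since $\boldsymbol\xi$ is supported on AC curves, the set where $\dot\gamma(t)$ fails to exist is negligible for $dt\otimes d\boldsymbol\xi$ by Fubini, so the definition is meaningful for a.e.\ $t\in I$ and $\mu_t$-a.e.\ $x$.

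For the continuity equation, I would test against $\varphi\in C^\infty_c(I^\circ\times\mathbb T^d)$. Using $\mu_t=\hat e_t\sharp\boldsymbol\xi$, the disintegration (together with the fact that $\boldsymbol\xi^x_t$ is concentrated on curves with $\gamma(t)=x$), and Fubini, one obtains
\[\int_I\int_{\mathbb T^d}\bigl(\partial_t\varphi+v_t\cdot\nabla\varphi\bigr)d\mu_t\,dt=\int_{\Gamma_I}\int_I\frac{d}{dt}\bigl[\varphi(t,\gamma(t))\bigr]dt\,d\boldsymbol\xi(\gamma)=0,\]
the last equality being the fundamental theorem of calculus for AC curves combined with the compact support of $\varphi$ in the interior of $I$. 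This yields \eqref{eq:conteq}. For the integrability condition \eqref{eq:intcond}, Jensen's inequality applied to the probability measure $\boldsymbol\xi^x_t$ gives
\[\int_I\int_{\mathbb T^d}|v_t(x)|^q\,d\mu_t(x)\,dt\le \int_I\int_{\Gamma_I}|\dot\gamma(t)|^q\,d\boldsymbol\xi(\gamma)\,dt,\]
which is finite by the assumption on $|\dot\gamma|$ (read with the exponent matching \eqref{eq:intcond}; for $q=1$ it is exactly the stated hypothesis).

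The main technical obstacle is to guarantee joint Borel measurability of $(t,x)\mapsto v_t(x)$, which is needed both to justify the definition of $v$ and to apply Fubini above. I would address this by disintegrating the product measure $dt\otimes d\boldsymbol\xi$ on $I\times\Gamma_I$ with respect to the Borel map $(t,\gamma)\mapsto(t,\gamma(t))$, obtaining a jointly Borel family $\{\boldsymbol\xi^x_t\}$, and by realizing $v$ as a conditional expectation of the $dt\otimes d\boldsymbol\xi$-a.e.\ defined map $(t,\gamma)\mapsto \dot\gamma(t)$. As an alternative, one could first establish the $W_q$-absolute continuity of $\boldsymbol\mu$ from the chain
\[W_q(\mu_s,\mu_t)\le\Bigl(\int_{\Gamma_I}|\gamma(t)-\gamma(s)|^q\,d\boldsymbol\xi\Bigr)^{1/q}\le \int_s^t\Bigl(\int_{\Gamma_I}|\dot\gamma(\tau)|^q\,d\boldsymbol\xi\Bigr)^{1/q}d\tau,\]
via Minkowski's integral inequality, and then invoke Theorem 8.3.1 of \cite{AGS} to produce $v$ with the desired properties, bypassing the explicit construction entirely.
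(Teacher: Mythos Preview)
Your proposal is correct and follows essentially the same approach as the paper: both define $v_t(x)$ as the barycentric projection (conditional expectation) of $\dot\gamma(t)$ via the disintegration $\boldsymbol\xi=\mu_t\otimes\boldsymbol\xi^x_t$, then verify the continuity equation by the chain rule along $\gamma$ and check \eqref{eq:intcond} directly from the hypothesis. The only cosmetic differences are that the paper tests against time-independent $\varphi\in C^1_c(\mathbb T^d)$ (showing $t\mapsto\int\varphi\,d\mu_t$ is AC) whereas you test against space-time $\varphi$, and that you are more explicit about the joint-measurability issue and the alternative via Theorem~8.3.1 of \cite{AGS}; neither changes the substance of the argument.
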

\begin{proof}
For all $\varphi\in C^1_c(\mathbb T^d)$ it holds
\begin{align*}
\left|\int_{\Gamma_I}\varphi(\gamma(t))\,d\boldsymbol\xi(\gamma)-\int_{\Gamma_I}\varphi(\gamma(s))\,d\boldsymbol\xi(\gamma)\right|
\le&\int_{\Gamma_I}|\varphi(\gamma(t))-\varphi(\gamma(s))|\,d\boldsymbol\xi(\gamma)\le \|\nabla\varphi\|_{\infty}\\
\le&\int_{t}^s \int_{\Gamma_I}|\dot\gamma|(\tau)\,d\boldsymbol\xi(\gamma)\,d\tau,
\end{align*}
and so if $\tau\mapsto \int_{\Gamma_I}|\dot\gamma|(\tau)\,d\boldsymbol\xi(\gamma)$ belongs to $L^1(I)$ then the map $t\mapsto \int_{\mathbb T^d}\varphi(x)\,d\mu_t(x)$
belongs to $AC(I)$. Moreover
\begin{align*}
\dfrac{d}{dt}\int_{\mathbb T^d}\varphi(x)\,d\mu_t(x)=&\dfrac{d}{dt}\int_{\Gamma_I}\varphi(\gamma(t))\,d\boldsymbol\xi(\gamma)=\int_{\Gamma_I}\nabla\varphi(\gamma(t))\cdot\dot\gamma(t)\,d\boldsymbol\xi(\gamma)\\
=&\int_{\Gamma_I}\int_{\hat e_t^{-1}(x)}\langle\nabla\varphi(\gamma(t)),\dot\gamma(t)\rangle\,d\eta_{t,x}(\gamma)\,d\mu_t(x)\\
=&\int_{\Gamma_I}\langle \nabla\varphi(x),\int_{\hat e_t^{-1}(x)}\dot \gamma(t)\,d\eta_{t,x}(\gamma)\rangle\,d\mu_t(x),
\end{align*}
where we disintegrate $\boldsymbol\xi=\mu_t\otimes\eta_{t,x}$ w.r.t. $\hat e_t$.
\par\medskip\par 
In particular, set \[v_t(x)=\int_{\hat e_t^{-1}(x)}\dot\gamma(t)\,d\eta_{t,x}(\gamma)\] for $\mu_t$-a.e. $x\in\mathbb T^d$ and $\mathscr L^1$-a.e. $t\in I$, it holds
\[\dfrac{d}{dt}\int_{\mathbb T^d}\varphi(x)\,d\mu_t(x)=\int_{\mathbb T^d}\nabla\varphi(x)v_t(x)\,d\mu_t(x),\]
and by the absolute continuity of $t\mapsto \int_{\mathbb T^d}\varphi(x)\,d\mu_t(x)$ this relation holds also in the sense of distributions.
So $\boldsymbol\mu=\{\mu_t\}_{t\in I}$ solves the continuity equation driven by $v=v_t(x)$. Moreover
\begin{align*}
\int_I\|v_t\|_{L^1_{\mu_t}}\,dt=&\int_I\int_{\mathbb T^d}|v_t(x)|\,d\mu_t(x)\,dt\le \int_I\int_{\mathbb T^d}\int_{\hat e_t^{-1}(x)}|\dot\gamma(t)|\,d\eta_{t,x}(\gamma)\,d\mu_t\,dt\\
\le&\iint_{I\times\Gamma_I}|\dot\gamma(t)|\,d\boldsymbol\xi(\gamma)\,dt<+\infty,
\end{align*}
so \eqref{eq:intcond} is satisfied.
\end{proof}

In \cite{JMQ21} the authors used variations generated with optimal displacements to define super/sub differentials, 
such kind of variations can be viewed either as admissible variations generated by measures on curves or as flat variations.

\section{Differentiation in the Wasserstein space}\label{sec:diff}

\subsection{Derivative along admissible variations}

The class of admissible variations can be used to define a concept of derivative as follows.

\begin{definition}\label{def:deriv}
Given a function $U:\mathscr P(\mathbb T^d)\to\mathbb R$, and a family of $q$-admissible variations $\boldsymbol\pi:=\{\pi_t\}_{t\in [0,1]}$ from $\mu$,
we say that $p_\mu^{\boldsymbol\pi}\in L^p_\mu(\mathbb T^d)$, $p>1$, is a \emph{derivative of $U$ at $\mu$ along the family of $q$-admissible variations $\boldsymbol\pi$} if
\[\lim_{t\to 0^+} \dfrac{1}{t}\left|U(\mathrm{pr}_2\sharp\pi_t)-U(\mu)-\iint_{\mathbb T^d\times\mathbb T^d}\langle p_\mu^{\boldsymbol\pi}(x),y-x\rangle\,d\pi_t(x,y)\right|=0.\]
\end{definition}

We show a simple example of computation of the derivative along admissible variations.

\begin{example}
Let $\varphi\in C^1_c(\mathbb T^d)$, and define $U:\mathscr P(\mathbb T^d)\to\mathbb R$ to be $\displaystyle U(\mu)=\int_{\mathbb T^d}\varphi(x)\,d\mu(x)$.
Then $\nabla\varphi$ is a derivative of $U(\cdot)$ at $\mu$ for any $\mu\in\mathscr P(\mathbb R^d)$ along every family $\boldsymbol\pi:=\{\pi_t\}_{t\in [0,1]}$ of $q$-admissible variations starting from $\mu$.
\end{example}
\begin{proof}
Given any $\varphi\in C^1_c(\mathbb T^d)$, and set
\[g_{\varphi}(x,y):=\begin{cases}\dfrac{\varphi(y)-\varphi(x)-\langle\nabla\varphi(x),y-x\rangle}{|y-x|},&\textrm{ for $x\ne y$,}\\\\ 0,&\textrm{ for $x=y$},\end{cases}\]
we have that $g_{\varphi}$ is continuous, with compact support.
H\"older's inequality yields
\begin{align*}
\left|\dfrac{1}{t}\iint_{\mathbb T^d\times\mathbb T^d} \left[\varphi(y)-\varphi(x)-\langle\nabla\varphi(x),y-x\rangle\right]\,d\pi_t(x,y)\right|\hspace{-0.25\textwidth}&\\
=&\dfrac1t\iint_{\mathbb T^d\times\mathbb T^d} |g_\varphi(x,y)|\cdot |y-x|\,d\pi_{t}(x,y)\\
\le&\left(\iint_{\mathbb T^d\times\mathbb T^d} |g_\varphi(x,y)|^p\,d\pi_{t}(x,y)\right)^{1/p}\cdot \dfrac{1}{t}W^{\pi_{t}}_q(\mu,\mathrm{pr}_2\sharp\pi_t).
\end{align*}
By taking the limit along any sequence $t_n\to 0^+$, and recalling the assumptions on $\pi_t$, the continuity of $g_\varphi$ and the fact that $\pi_{t_n}\rightharpoonup(\mathrm{Id},\mathrm{Id})\sharp\mu$,
we obtain
\begin{align*}
\lim_{n\to +\infty}\left|\dfrac{1}{t_n}\iint_{\mathbb T^d\times\mathbb T^d} \left[\varphi(y)-\varphi(x)-\langle\nabla\varphi(x),y-x\rangle\right]\,d\pi_{t_n}(x,y)\right|=0.
\end{align*}
We conclude that $\nabla\varphi$ is a directional derivative by the arbitrariness of the sequence $t_n\to 0^+$.
\end{proof}

%

\begin{corollary}\label{cor:nonsmooth-case}
Suppose that $p_\mu^{\boldsymbol\pi}\in\overline{\{\nabla \varphi:\, \varphi\in C^1_c(\mathbb T^d)\}}^{L^p_\mu}$ is a  derivative of $U$ w.r.t. the family of $q$-admissible variations $\boldsymbol\pi=\{\pi_t\}_{t\in [0,1]}$.
Then for every $\varepsilon>0$ there exists $P^{\boldsymbol\pi}_{\mu}\in C^1_c(\mathbb T^d)$, possibly depending by $\varepsilon$, such that
\[\lim_{t\to 0^+} \dfrac{1}{t}\left|U(\mu_t)-U(\mu)-\int_{\mathbb T^d}P^{\boldsymbol\pi}_{\mu}(z)d(\mu_t-\mu)(z)\right|\le\varepsilon,\]
where $\mu_t=\mathrm{pr}_2\sharp\pi_t$.
In particular, if $p_\mu^{\boldsymbol\pi}=\nabla \hat P^{\boldsymbol\pi}_\mu$ with $\hat P^{\boldsymbol\pi}_\mu\in C^1_c(\mathbb T^d)$, we have
\[\lim_{t\to 0^+} \dfrac{1}{t}\left|U(\mu_t)-U(\mu)-\int_{\mathbb T^d}\hat P^{\boldsymbol\pi}_{\mu}(z)d(\mu_t-\mu)(z)\right|=0.\]
\end{corollary}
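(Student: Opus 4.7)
The plan is to reduce the statement to a computation that closely mirrors the preceding example, by rewriting the flat-type integral against $\mu_t-\mu$ as a double integral against $\pi_t$ and then exploiting the definition of derivative along $\boldsymbol\pi$.

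First I would observe that for any $\psi\in C^1_c(\mathbb T^d)$, since $\mathrm{pr}_1\sharp\pi_t=\mu$ and $\mathrm{pr}_2\sharp\pi_t=\mu_t$,
\[\int_{\mathbb T^d}\psi(z)\,d(\mu_t-\mu)(z)=\iint_{\mathbb T^d\times\mathbb T^d}[\psi(y)-\psi(x)]\,d\pi_t(x,y).\]
Then, for a candidate $P^{\boldsymbol\pi}_\mu\in C^1_c(\mathbb T^d)$ to be chosen later, I would split the quantity whose $\limsup/t$ we want to bound into three pieces:
\begin{align*}
U(\mu_t)-U(\mu)&-\int_{\mathbb T^d}P^{\boldsymbol\pi}_\mu\,d(\mu_t-\mu)\\
=\,&\left[U(\mu_t)-U(\mu)-\iint\langle p^{\boldsymbol\pi}_\mu(x),y-x\rangle\,d\pi_t\right]\\
&+\iint\langle p^{\boldsymbol\pi}_\mu(x)-\nabla P^{\boldsymbol\pi}_\mu(x),y-x\rangle\,d\pi_t\\
&+\iint\left[\langle\nabla P^{\boldsymbol\pi}_\mu(x),y-x\rangle-\bigl(P^{\boldsymbol\pi}_\mu(y)-P^{\boldsymbol\pi}_\mu(x)\bigr)\right]d\pi_t.
\end{align*}
The first bracket divided by $t$ vanishes in the limit by Definition \ref{def:deriv}. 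The third bracket divided by $t$ tends to $0$ by exactly the computation carried out in the example preceding this corollary, applied to $\varphi=P^{\boldsymbol\pi}_\mu$ (Hölder, continuity of $g_{P^{\boldsymbol\pi}_\mu}$, narrow convergence $\pi_t\rightharpoonup(\mathrm{Id},\mathrm{Id})\sharp\mu$, and property (c) of admissible variations).

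The middle term is the only one that does not go to zero in the general (non-smooth) setting. Using Hölder's inequality with $1/p+1/q=1$ and $\mathrm{pr}_1\sharp\pi_t=\mu$,
\[\frac{1}{t}\left|\iint\langle p^{\boldsymbol\pi}_\mu(x)-\nabla P^{\boldsymbol\pi}_\mu(x),y-x\rangle\,d\pi_t\right|
\le \left\|p^{\boldsymbol\pi}_\mu-\nabla P^{\boldsymbol\pi}_\mu\right\|_{L^p_\mu}\cdot\frac{W^{\pi_t}_q(\mu,\mu_t)}{t}.\]
Property (c) of Definition \ref{def:adm-var} bounds the last factor by a constant $C$ uniformly for small $t$. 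By hypothesis, $p^{\boldsymbol\pi}_\mu$ lies in the $L^p_\mu$-closure of gradients of $C^1_c$ functions, so I would choose $P^{\boldsymbol\pi}_\mu\in C^1_c(\mathbb T^d)$ with $\|p^{\boldsymbol\pi}_\mu-\nabla P^{\boldsymbol\pi}_\mu\|_{L^p_\mu}\le\varepsilon/C$; this makes the middle contribution at most $\varepsilon$ in the limsup, yielding the stated inequality. In the special case $p^{\boldsymbol\pi}_\mu=\nabla\hat P^{\boldsymbol\pi}_\mu$ with $\hat P^{\boldsymbol\pi}_\mu\in C^1_c(\mathbb T^d)$, taking $P^{\boldsymbol\pi}_\mu=\hat P^{\boldsymbol\pi}_\mu$ kills the middle term exactly, so the limit is $0$.

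The main (mild) obstacle is really just the bookkeeping: ensuring that $p$ can be taken as in the example (which requires $p>1$, as stated in Definition \ref{def:deriv}) so that Hölder goes through with the $q$-Wasserstein estimate provided by admissibility, and confirming that property (c) supplies a uniform-in-$t$ bound controlling the approximation error. All remaining steps are either the definition of derivative along $\boldsymbol\pi$ or a direct reuse of the smooth-test-function computation already proved in the example.
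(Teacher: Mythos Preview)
Your proof is correct and follows essentially the same approach as the paper: both arguments rewrite $\int P\,d(\mu_t-\mu)$ as $\iint[P(y)-P(x)]\,d\pi_t$, split off the gradient-difference term controlled via H\"older and the density assumption, and handle the remaining Taylor-type remainder $P(y)-P(x)-\langle\nabla P(x),y-x\rangle$ exactly as in the preceding example. The only cosmetic difference is that the paper bundles your second and third terms into a single estimate and uses $\varepsilon/(C+1)$ rather than $\varepsilon/C$, which is immaterial.
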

\begin{proof}
Let $\varepsilon>0$ and $P^{\boldsymbol\pi}_{\mu}\in C^1_c(\mathbb T^d)$ be such that $\|p_\mu-\nabla P^{\boldsymbol\pi}_\mu\|_{L^p}\le \varepsilon/(C+1)$,
where $C>0$ is the constant appearing in Definition \ref{def:adm-var} (c.).
By H\"older's inequality
\begin{align*}
\lim_{t\to 0^+}&\left|\dfrac{1}{t}\iint_{\mathbb T^d\times\mathbb T^d} \left[P^{\boldsymbol\pi}_\mu(y)-P^{\boldsymbol\pi}_\mu(x)-\langle p_\mu^{\boldsymbol\pi}(x),y-x\rangle\right]\,d\pi_t(x,y)\right|\\
=&\lim_{t\to 0^+}\left|\dfrac{1}{t}\iint_{\mathbb T^d\times\mathbb T^d} \left[P^{\boldsymbol\pi}_\mu(y)-P^{\boldsymbol\pi}_\mu(x)-\langle \nabla P^{\boldsymbol\pi}_\mu(x),y-x\rangle\right]\,d\pi_t(x,y)\right.\\
&\left.-\iint_{\mathbb T^d\times\mathbb T^d}\langle \nabla p_\mu(x)-P^{\boldsymbol\pi}_\mu(x),y-x\rangle\,d\pi_t(x,y)\right|\\
\le&\lim_{t\to 0^+}\left|\dfrac{1}{t}\iint_{\mathbb T^d\times\mathbb T^d} \left[P^{\boldsymbol\pi}_\mu(y)-P^{\boldsymbol\pi}_\mu(x)-\langle \nabla P^{\boldsymbol\pi}_\mu(x),y-x\rangle\right]\,d\pi_t(x,y)\right|+\dfrac{\varepsilon}{C+1} \cdot\dfrac{1}{t}W^{\pi_t}_q(\mu,\mathrm{pr}_2\sharp \pi_t)\\
\le&\varepsilon.
\end{align*}
Thus 
\[\lim_{t\to 0^+} \dfrac{1}{t}\left|U(\mu_t)-U(\mu)-\iint_{\mathbb T^d\times\mathbb T^d}[P^{\boldsymbol\pi}_\mu(y)-P^{\boldsymbol\pi}_\mu(x)]\,d\pi_t(x,y)\right|\le\varepsilon,\]
and we notice that
\[\iint_{\mathbb T^d\times\mathbb T^d}[P^{\boldsymbol\pi}_\mu(y)-P^{\boldsymbol\pi}_\mu(x)]\,d\pi_t(x,y)=\iint_{\mathbb T^d}P^{\boldsymbol\pi}_\mu(z)\,d(\mu_t-\mu)(z).\]
The last assertion follows from taking $P^{\boldsymbol\pi}_\mu=\hat P^{\boldsymbol\pi}_\mu$ for every $\varepsilon>0$. The proof is complete.
\end{proof}

For specific classes of variations, it is possible to provide easy expressions for the computation of the derivative.

\begin{corollary}\label{cor:diffcase}
Let $p_\mu^{\boldsymbol\pi}$ be a derivative of $U$ w.r.t. the family of admissible variations $\boldsymbol\pi=\{\pi_t\}_{t\in [0,1]}$ starting from $\mu$,
and set $\mu_t=\mathrm{pr}_2\sharp\pi_t$.
Then
\begin{enumerate}
\item if $\pi_t=(\mathrm{Id},(\mathrm{Id}+t\phi))\sharp\mu$ is a transport map variation we have 
\[\lim_{t\to 0^+} \dfrac{U(\mu_t)-U(\mu)}{t}=\int_{\mathbb T^d}\langle p^{\boldsymbol\pi}_\mu(x),\phi(x)\rangle\,d\mu(x).\]
\item if $\pi_t=(e_0,e_t)\sharp\boldsymbol\eta$ is a variation along a solution of the continuity equation supported on solutions
of a bounded and continuous vector field $v=v_t(x)$ we have 
\begin{align*}
\lim_{t\to 0^+} \dfrac{U(\mu_t)-U(\mu)}{t}=&\int_{\mathbb T^d}\langle p^{\boldsymbol\pi}_\mu(x),v_0(x)\rangle\,d\mu(x).
\end{align*}
\item if $\pi_t=(\mathrm{Id},\mathrm{Id})\sharp\mu+t[\pi-(\mathrm{Id},\mathrm{Id})\sharp\mu]$ is a flat variation with $\pi\in\mathscr P(\mathbb R^d\times\mathbb R^d)$, $\mathrm{pr}_1\sharp\pi=\mu$, set $\mathrm{pr}_2\sharp\pi=\mu'$ and recalling that $\mathrm{pr}_2\sharp\pi_t=\mu+t(\mu'-\mu)$, we have 
\[\lim_{t\to 0^+} \dfrac{U(\mu_t)-U(\mu)}{t}=\iint_{\mathbb T^d\times\mathbb T^d}\langle p^{\boldsymbol\pi}_\mu(x),y-x\rangle\,d\pi(x,y).\]
\end{enumerate}
\end{corollary}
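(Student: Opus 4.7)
The plan is to start from the defining relation
\[\lim_{t\to 0^+} \dfrac{1}{t}\iint_{\mathbb T^d\times\mathbb T^d}\langle p_\mu^{\boldsymbol\pi}(x),y-x\rangle\,d\pi_t(x,y)
=\lim_{t\to 0^+}\dfrac{U(\mu_t)-U(\mu)}{t},\]
which holds as soon as the left-hand side exists (because by Definition \ref{def:deriv} the difference of the two sides is $o(1)$). Hence in each case it suffices to compute explicitly the limit of $\frac{1}{t}\iint \langle p^{\boldsymbol\pi}_\mu(x),y-x\rangle\,d\pi_t(x,y)$ by substituting the specific form of $\pi_t$.

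For the transport map variation case $(1)$, pushing forward against $(\mathrm{Id},\mathrm{Id}+t\phi)$ gives directly
\[\iint \langle p^{\boldsymbol\pi}_\mu(x),y-x\rangle\,d\pi_t(x,y)=t\int_{\mathbb T^d}\langle p^{\boldsymbol\pi}_\mu(x),\phi(x)\rangle\,d\mu(x),\]
and division by $t$ yields the desired formula with no limit to take. For the flat variation case $(3)$, linearity of the integral together with the fact that $y-x$ vanishes $(\mathrm{Id},\mathrm{Id})\sharp\mu$-almost everywhere gives
\[\iint\langle p^{\boldsymbol\pi}_\mu(x),y-x\rangle\,d\pi_t(x,y)=t\iint\langle p^{\boldsymbol\pi}_\mu(x),y-x\rangle\,d\pi(x,y),\]
again yielding the claim after dividing by $t$.

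The one step that actually requires a passage to the limit is case $(2)$, the Eulerian variation along $v$. Writing $\gamma(t)-\gamma(0)=\int_0^t v_s(\gamma(s))\,ds$ for $\boldsymbol\eta$-a.e.\ curve, one obtains
\[\frac{1}{t}\iint \langle p^{\boldsymbol\pi}_\mu(x),y-x\rangle\,d\pi_t(x,y)
=\int_{\Gamma_I}\Bigl\langle p^{\boldsymbol\pi}_\mu(\gamma(0)),\frac{1}{t}\int_0^t v_s(\gamma(s))\,ds\Bigr\rangle\,d\boldsymbol\eta(\gamma).\]
I would then pass to the limit $t\to 0^+$ by dominated convergence. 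The pointwise convergence $\frac{1}{t}\int_0^t v_s(\gamma(s))\,ds\to v_0(\gamma(0))$ follows from the fundamental theorem of calculus applied to the continuous map $s\mapsto v_s(\gamma(s))$, while the dominating function $\|v\|_\infty\,|p^{\boldsymbol\pi}_\mu(\gamma(0))|$ is $\boldsymbol\eta$-integrable because $p^{\boldsymbol\pi}_\mu\in L^p_\mu\subset L^1_\mu$ on the compact torus and $\hat e_0\sharp\boldsymbol\eta=\mu$. The limit then equals $\int_{\Gamma_I}\langle p^{\boldsymbol\pi}_\mu(\gamma(0)),v_0(\gamma(0))\rangle\,d\boldsymbol\eta(\gamma)=\int_{\mathbb T^d}\langle p^{\boldsymbol\pi}_\mu(x),v_0(x)\rangle\,d\mu(x)$.

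The only potential obstacle is ensuring the dominated convergence hypothesis in $(2)$; the boundedness and continuity assumed on $v$ in the statement are tailored exactly so that the FTC argument applies curve-by-curve and the integrand stays uniformly bounded by an integrable function. Without these hypotheses one would need to invoke some form of Lebesgue differentiation of the vector field along trajectories, which is why the statement is phrased only for bounded continuous $v$.
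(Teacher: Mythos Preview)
Your proposal is correct and follows essentially the same route as the paper: in each of the three cases you compute $\tfrac{1}{t}\iint\langle p^{\boldsymbol\pi}_\mu(x),y-x\rangle\,d\pi_t(x,y)$ explicitly from the specific form of $\pi_t$ and then invoke Definition~\ref{def:deriv}. Your justification of the passage to the limit in case~(2) via dominated convergence (using boundedness and continuity of $v$ for the pointwise limit, and $\|v\|_\infty|p^{\boldsymbol\pi}_\mu(\gamma(0))|$ as dominating function) is in fact slightly more explicit than what the paper records.
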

\begin{proof}\leavevmode\par
1. In this case, we have 
\[\dfrac{1}{t}\iint_{\mathbb T^d\times\mathbb T^d}\langle p^{\boldsymbol\pi}_\mu(x),y-x\rangle\,d\pi_t(x,y)=\int_{\mathbb T^d}\langle p^{\boldsymbol\pi}_\mu(x),\phi(x)\rangle\,d\mu,\]
and the result follows from Definition \ref{def:adm-var}.
\par\medskip\par
2. In this case we have
\begin{align*}
\lim_{t\to 0^+}\dfrac{1}{t}\iint_{\mathbb T^d\times\mathbb T^d}\langle p^{\boldsymbol\pi}_\mu(x),y-x\rangle\,d\pi_t(x,y)
=&\lim_{t\to 0^+}\iint_{\mathbb T^d\times\mathbb T^d}\langle p^{\boldsymbol\pi}_\mu(x),\dfrac{\gamma(t)-\gamma(0)}{t}\rangle\,d\boldsymbol\eta(x,\gamma)\\
=&\lim_{t\to 0^+}\iint_{\mathbb T^d\times\mathbb T^d}\langle p^{\boldsymbol\pi}_\mu(x),\dfrac{1}{t}\int_0^t \dot\gamma(s)\,ds\rangle\,d\boldsymbol\eta(x,\gamma)\\
=&\lim_{t\to 0^+}\iint_{\mathbb T^d\times\mathbb T^d}\langle p^{\boldsymbol\pi}_\mu(x),\dfrac{1}{t}\int_0^t v_s(\gamma(s))\,ds\rangle\,d\boldsymbol\eta(x,\gamma)\\
=&\int_{\mathbb T^d}\langle p^{\boldsymbol\pi}_\mu(x),v_0(x)\rangle\,d\mu(x),
\end{align*}
and the result follows from Definition \ref{def:adm-var}.
\par\medskip\par
3. In this case we have 
\[\dfrac{1}{t}\iint_{\mathbb T^d\times\mathbb T^d}\langle p^{\boldsymbol\pi}_\mu(x),y-x\rangle\,d\pi_t(x,y)=\iint_{\mathbb T^d\times\mathbb T^d}\langle p^{\boldsymbol\pi}_\mu(x),y-x\rangle\,d\pi(x,y),\]
and the result follows from Definition \ref{def:adm-var}.
\end{proof}

\begin{remark}
Notice that combining Corollary \ref{cor:diffcase} (3) and Corollary \ref{cor:nonsmooth-case}, if $\boldsymbol\pi=\{\pi_t\}$ is a family of flat variations starting from $\mu$
and there exists $P_\mu^{\boldsymbol\pi}\in C^1_c(\mathbb T^d)$ such that $p^{\boldsymbol\pi}_\mu:=\nabla P_\mu^{\boldsymbol\pi}$ is a derivative for $U$ w.r.t. $\boldsymbol\pi$,
then 
\[\lim_{t\to 0^+} \dfrac{U(\mu_t)-U(\mu)}{t}=\lim_{t\to 0}\dfrac{1}{t}\iint_{\mathbb T^d\times\mathbb T^d}\left[P^{\boldsymbol\pi}_{\mu}(y)-P^{\boldsymbol\pi}_{\mu}(x)\right]\,d\pi_t(x,y)=\int_{\mathbb T^d} P^{\boldsymbol\pi}_\mu(z)\,d(\mu'-\mu)(z),\]
where $\mu'=\mathrm{pr}_2\sharp\pi$.
\end{remark}

\begin{remark}
Suppose that $p_\mu\in L^p(\mathbb T^d)$ is a  derivative for a family of  flat variations starting from $\mu$ and suppose moreover that  $(\mu,x)\mapsto p_ \mu $ is continuous, then  the integral
\[\dfrac{1}{t}\int_{\mathbb T^d\times\mathbb T^d}\langle p^{\boldsymbol\pi}_\mu(x),y-x\rangle\,d\pi_t(x,y)\]
does actually not depend on $t$.

We have the same result if $ p_ \mu$ is a derivative for family of variations generated by a transport map.
\end{remark}

\begin{proposition}[Integral form]\label{prop:infbeh}
	Let $U:\mathscr P(\mathbb T^d)\to \mathbb R$ be $W_p$-Lipschitz continuous. Suppose that 
	\begin{enumerate}
		\item[]
		\item $U(\cdot)$ is differentiable w.r.t. every family of transport map variations,
		\item[]
		\item for any $\mu\in\mathbb R^d$ the derivative $p^{\boldsymbol\pi}_{\mu}(\cdot)$ of $U$ w.r.t. to every family of transport map variations $\boldsymbol\pi$ 
		from $\mu$ depends on $\mu$ only (and not by $\boldsymbol\pi$), i.e., there exists a Borel map $(\mu,x)\mapsto p_{\mu}(x)$ with $p_\mu(\cdot)\in L^q_{\mu}(\mathbb T^d)$ 
		such that $p_\mu(\cdot)=p^{\boldsymbol\pi}_{\mu}(\cdot)$ for every family of transport map variations $\boldsymbol\pi$ from $\mu$.
	\end{enumerate}
	Then for every $W_p$-absolutely continuous curve $\boldsymbol\mu=\{\mu_\tau\}_{\tau\in I}$ and $s,t\in I$, we have 
	\[U(\mu_t)-U(\mu_s)=\int_s^t \int_{\mathbb T^d}\langle p_{\mu_\tau}(x),v_\tau(x)\rangle\,d\mu_\tau(x)\,d\tau,\]
	where $v=v_t(x)$ is characterized to be the unique Borel vector field such that 
	\begin{itemize}
		\item the continuity equation \eqref{eq:conteq} holds,
		\item for a.e. $t\in I$ we have that $v_t\in L^p_{\mu_t}(\mathbb T^d)$ with $\|v_t\|_{L^p_{\mu_t}}\le |\dot\mu_t|$, where $t\mapsto |\dot\mu_t|$ is the metric derivative of $\boldsymbol\mu$ (in particular, we have also that \eqref{eq:intcond} is satisfied).
	\end{itemize}
\end{proposition}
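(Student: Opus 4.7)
The existence and uniqueness of $v$ with $\|v_t\|_{L^p_{\mu_t}}\le|\dot\mu_t|$ for a.e.\ $t$ is exactly Theorem~8.3.1 of \cite{AGS}; let $\boldsymbol\eta\in\mathscr P(\Gamma_I)$ be a superposition representative of $\boldsymbol\mu$, so that $\mu_\tau=\hat e_\tau\sharp\boldsymbol\eta$ and $\dot\gamma(s)=v_s(\gamma(s))$ for $\boldsymbol\eta$-a.e.\ $\gamma$. Setting $f(\tau):=U(\mu_\tau)$, the $W_p$-Lipschitz continuity of $U$ combined with $W_p(\mu_t,\mu_s)\le\int_s^t|\dot\mu_\tau|\,d\tau$ yields $f\in AC(I)$. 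Hence it suffices to identify the pointwise derivative $f'(\tau_0)=\int_{\mathbb T^d}\langle p_{\mu_{\tau_0}},v_{\tau_0}\rangle\,d\mu_{\tau_0}$ at a.e.\ $\tau_0\in I$ and then conclude by the fundamental theorem of calculus.

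Fix such a $\tau_0$ at which $f$ is differentiable, $|\dot\mu_{\tau_0}|$ exists, $v_{\tau_0}\in L^p_{\mu_{\tau_0}}$, and the Lebesgue-point conditions specified below hold. Given $\varepsilon>0$, I choose $w\in C^1(\mathbb T^d;\mathbb R^d)$ with $\|w-v_{\tau_0}\|_{L^p_{\mu_{\tau_0}}}<\varepsilon$. Hypothesis~(2) together with Corollary~\ref{cor:diffcase}(1) applied to the transport map variation $\pi^w_t:=(\mathrm{Id},\mathrm{Id}+tw)\sharp\mu_{\tau_0}$ gives
\[
\lim_{t\to 0^+}\frac{U((\mathrm{Id}+tw)\sharp\mu_{\tau_0})-U(\mu_{\tau_0})}{t}=\int_{\mathbb T^d}\langle p_{\mu_{\tau_0}}(x),w(x)\rangle\,d\mu_{\tau_0}(x).
\]
To transfer this to $\mu_{\tau_0+t}$, I would use the coupling $(\gamma(\tau_0+t),\gamma(\tau_0)+tw(\gamma(\tau_0)))\sharp\boldsymbol\eta\in\Pi(\mu_{\tau_0+t},(\mathrm{Id}+tw)\sharp\mu_{\tau_0})$, together with $\gamma(\tau_0+t)-\gamma(\tau_0)=\int_{\tau_0}^{\tau_0+t}v_s(\gamma(s))\,ds$ and H\"older's inequality, to obtain
\[
W_p^p\bigl(\mu_{\tau_0+t},(\mathrm{Id}+tw)\sharp\mu_{\tau_0}\bigr)\le t^{p-1}\int_{\tau_0}^{\tau_0+t}\!\int_{\Gamma_I}|v_s(\gamma(s))-w(\gamma(\tau_0))|^p\,d\boldsymbol\eta(\gamma)\,ds.
\]
Splitting the integrand through $w(\gamma(s))$, the first piece is controlled at a Lebesgue point of $s\mapsto\|v_s-w\|^p_{L^p_{\mu_s}}$ (chosen simultaneously for a countable dense family of test maps $w$), while the second piece vanishes as $t\to 0^+$ by continuity of $w$ and the fact that $\gamma(s)\to\gamma(\tau_0)$ for $\boldsymbol\eta$-a.e.\ $\gamma$. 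This yields $\limsup_{t\to 0^+}W_p(\mu_{\tau_0+t},(\mathrm{Id}+tw)\sharp\mu_{\tau_0})/t\le\|v_{\tau_0}-w\|_{L^p_{\mu_{\tau_0}}}<\varepsilon$.

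Combining the previous two estimates with the $W_p$-Lipschitz continuity of $U$ (constant $L$) gives $\bigl|f'(\tau_0)-\int_{\mathbb T^d}\langle p_{\mu_{\tau_0}},w\rangle\,d\mu_{\tau_0}\bigr|\le L\varepsilon$; since $p_{\mu_{\tau_0}}\in L^q_{\mu_{\tau_0}}$, H\"older's inequality then lets me send $w\to v_{\tau_0}$ in $L^p_{\mu_{\tau_0}}$ to obtain $f'(\tau_0)=\int_{\mathbb T^d}\langle p_{\mu_{\tau_0}},v_{\tau_0}\rangle\,d\mu_{\tau_0}$, and integrating in $\tau$ closes the proof. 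I expect the main obstacle to be the Wasserstein estimate above: it is the step that truly upgrades the information on $U$ along transport map variations supplied by hypothesis~(2) to information along the Lagrangian flow of $\boldsymbol\mu$, and it requires a careful simultaneous choice of Lebesgue points valid for a countable dense family of smooth approximations $w$.
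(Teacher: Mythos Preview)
Your overall strategy coincides with the paper's: show $f(\tau):=U(\mu_\tau)$ is absolutely continuous via the $W_p$-Lipschitz bound and $W_p(\mu_t,\mu_s)\le\int_s^t|\dot\mu_\tau|\,d\tau$, then identify $f'(\tau)$ at a.e.\ $\tau$ and integrate. The difference lies in how the derivative is identified.

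The paper avoids both the smooth approximation $w$ and the hand-made superposition estimate. It takes $\hat\mu_{\tau,h}:=(\mathrm{Id}+hv_\tau)\sharp\mu_\tau$ directly (no regularization of $v_\tau$), splits
\[
\left|\frac{f(\tau+h)-f(\tau)}{h}-\int_{\mathbb T^d}\langle p_{\mu_\tau},v_\tau\rangle\,d\mu_\tau\right|
\le\left|\frac{U(\hat\mu_{\tau,h})-U(\mu_\tau)}{h}-\int_{\mathbb T^d}\langle p_{\mu_\tau},v_\tau\rangle\,d\mu_\tau\right|
+\mathrm{Lip}(U)\,\frac{W_p(\mu_{\tau+h},\hat\mu_{\tau,h})}{h},
\]
kills the first term by Corollary~\ref{cor:diffcase}(1), and dispatches the second in one line by citing Proposition~8.4.6 of \cite{AGS}, which states precisely that $W_p(\mu_{\tau+h},(\mathrm{Id}+hv_\tau)\sharp\mu_\tau)=o(h)$ for a.e.\ $\tau$ along any $W_p$-absolutely continuous curve. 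Your coupling $(\gamma(\tau_0+t),\gamma(\tau_0)+tw(\gamma(\tau_0)))\sharp\boldsymbol\eta$ together with the H\"older/Lebesgue-point computation is essentially a re-proof of (a weak version of) that AGS proposition; it is correct, and has the virtue of being self-contained, but the detour through a countable dense family of $C^1$ maps and the simultaneous Lebesgue-point bookkeeping is unnecessary once you invoke \cite{AGS} directly.
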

\begin{proof}
	The map $g:I\to\mathbb R$ defined by $g(s):=U(\mu_s)$ is absolutely continuous, indeed
	\[|U(\mu_t)-U(\mu_s)|\le \mathrm{Lip}(U)\cdot W_p(\mu_t,\mu_s) \le \int_s^t \mathrm{Lip}(U) |\dot\mu_{\tau}|\,d\tau,\]
	where $\tau\mapsto |\dot\mu_{\tau}|$ denotes the metric derivative of $\boldsymbol\mu$.
	In particular,
	\[U(\mu_t)-U(\mu_s)=\int_s^t \dot g(\tau)\,d\tau,\]
	where $\dot g(\tau)$ is defined for a.e. $t\in I$.
	To conclude the proof we must prove that 
	\[\displaystyle\dot g(s)=\int_{\mathbb T^d}\langle p_{\mu_s}(x),v_s(x)\rangle\,d\mu_s(x).\]
	Given $\tau\in I$ and $h>0$ such that $\tau+h\in I$, we define $\hat \mu_{\tau,h}=(\mathrm{Id}+hv_\tau)\sharp\mu_\tau$.
	We have 
	\begin{align*}
		\left|\dfrac{g(\tau+h)-g(\tau)}{h}-\int_{\mathbb T^d}\langle p_{\mu_\tau}(x),v_\tau(x)\rangle\,d\mu_\tau(x)\right|\hspace{-6cm}&\\
		\le&\left|\dfrac{U(\hat \mu_{\tau,h})-U(\mu_\tau)}{h}-\int_{\mathbb T^d}\langle p_{\mu_\tau}(x),v_\tau(x)\rangle\,d\mu_\tau(x)\right|+\left|\dfrac{U(\mu_{\tau+h})-U(\hat \mu_{\tau,h})}{h}\right|\\
		\le&\left|\dfrac{U(\hat \mu_{\tau,h})-U(\mu_\tau)}{h}-\int_{\mathbb T^d}\langle p_{\mu_\tau}(x),v_\tau(x)\rangle\,d\mu_\tau(x)\right|+\mathrm{Lip}(U)\cdot \dfrac{W_p(\mu_{\tau+h},\hat \mu_{\tau,h})}{h}.
	\end{align*}
	By letting $h\to 0$, the second term vanishes for a.e. $\tau\in I$ thanks to Proposition 8.4.6 in \cite{AGS}, and the first vanishes by Corollary \ref{cor:diffcase}, so $\dot g(\tau)$ has the desired expression.
\end{proof}

\subsection{Comparison with other concept of differentials}

We recall now some other concept of differentials in Wasserstein space, and extensively used in particular  for multiagent systems of in the Mean Field Game theory framework.
Our aim is to compare these concept with the ones defined in the previous section in the smooth case.

\begin{definition}[Definition 2.2.1 and Definition 2.2.2 in \cite {CDLL}]\label{def:CDLLder}
Let $U:\mathscr P(\mathbb T^d)\to\mathbb R$ be a function.
\begin{itemize}
\item We say that $U(\cdot)$ is $C^1$ if there exists a continuous map $\dfrac{\delta U}{\delta m}:\mathscr P(\mathbb T^d)\times\mathbb T^d\to\mathbb R$ 
such that for any $m,m'\in\mathscr P(\mathbb T^d)$ it holds
\[\lim_{s\to 0^+}\dfrac{U((1-s)m+sm')-U(m)}{s}=\int_{\mathbb T^d} \dfrac{\delta U}{\delta m}(m,y)\,d(m'-m)(y).\]
Note that $\dfrac{\delta U}{\delta m}$ is defined up to an additive constant. We adopt the normalization convention
\[\int_{\mathbb T^d} \dfrac{\delta U}{\delta m}(m,y)\,dm(y)=0.\]
\item Given a $C^1$ map $U(\cdot)$, if $\dfrac{\delta U}{\delta m}$ is of class $C^1$ with respect to the second variable, the instrinsic derivative 
$D_m U:\mathscr P(\mathbb T^d)\times\mathbb T^d\to\mathbb R^d$ is defined by
\[D_m U(m,y)=D_y\dfrac{\delta U}{\delta m}(m,y).\]
\end{itemize}
\end{definition}
Observe that in those definitions a joint regularity is requested, \emph{both} w.r.t. the dependence on the measure variable and to the finite dimensional one.

\begin{remark}
	We recall that, according to the statement followinf Formula 2.2 p. 29 and before Definition 2.2.2 of \cite{CDLL}, functions that are $C^1$ in the sense of \cite{CDLL} are automatically $W_1$-Lipschitz continuous.
\end{remark}

The next result shows that, in the regular case, these definitions of derivative almost coincide.

\begin{theorem} \label{thm:main}
	Let $U:\mathscr P(\mathbb T^d)\to\mathbb R$ 	be Lipschitz continuous.
	\begin{enumerate}
		\item[A)] Assume that $U$ is $C^1$ in the sense of Definition \ref{def:CDLLder}, and denote by $P_m(y) :=\dfrac{\delta U}{\delta m}(m,y)$ and $p_m(y):= \nabla P _m(y) = D_m U(m,y)$. 
		Suppose that $(m,y)\mapsto p_m(y)$ is continuous with respect to both variables $y$ and $m$. Then $U$ is derivable along every family of admissible variations and its derivative is $p_m$.
		In particular, 
		\begin{enumerate}
			\item[A-1)] $U$ is derivable along all families of flat variations and its derivative is $p_m$.
			\item[A-2)] $U$ is derivable along all families of transport map variations  generated by bounded Borel maps and its derivative is $p_m$.
			\item[A-3)] $U$ is derivable along all families of Eulerian variations and its derivative is $p_m$.
		\end{enumerate}
            \item[B)] For any $m\in\mathscr P(\mathbb T^d)$, let $P_m\in C^1(\mathbb T^d; \mathbb R)$ and set $p_m(y):= \nabla P _m(y)\in C^0(\mathbb T^d;\mathbb R^d)$. 
		We assume that $(y,m) \mapsto p_m(y) $ is continuous with respect to both variables and one between these conditions
		\begin{enumerate}
			\item[B-1)] $U$ is derivable along all families of flat variations, and its derivative at $m$ is $p_m(\cdot)$,
			\item[B-2)] $U$ is derivable along all families of transport map variations, and its derivative at $m$ is $p_m(\cdot)$,
			\item[B-3)] $U$ is derivable along all families of Eulerian variations, and its derivative at $m$ is $p_m(\cdot)$.
		\end{enumerate}
		Then $U$ is $C^1$ in the sense of Definition \ref{def:CDLLder} and  $\displaystyle\dfrac{\delta U}{\delta m}(m,y) =P_m(y)-\int_{\mathbb T^d}P_m(z)\,dm(z)$. 
	\end{enumerate}
\end{theorem}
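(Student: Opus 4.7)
My overall strategy is that Part (A) and Part (B) are essentially dual: (A) goes from the CDLL formula to derivatives along admissible variations, while (B) reverses this, and each case in (B) reduces, via Lemma \ref{lemma:intass} or Proposition \ref{prop:infbeh}, to the flat-variation case, where Corollary \ref{cor:nonsmooth-case} applies immediately. In both parts the key technical ingredient is uniform continuity of $(m,z)\mapsto p_m(z)$ on the compact product $\mathscr P(\mathbb T^d)\times\mathbb T^d$.

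For Part (A), given an admissible family $\{\pi_t\}$ with $\mu_t := \mathrm{pr}_2\sharp\pi_t$, I would interpolate via the flat curve $\mu_s^t := (1-s)m + s\mu_t$ and apply the CDLL defining identity to obtain
\[U(\mu_t)-U(m)=\int_0^1\iint_{\mathbb T^d\times\mathbb T^d}[P_{\mu_s^t}(y)-P_{\mu_s^t}(x)]\,d\pi_t(x,y)\,ds,\]
where $P_m(\cdot):=\delta U/\delta m(m,\cdot)$. Using the fundamental theorem of calculus along the chord $r\mapsto x+r(y-x)$ to write $P_{\mu_s^t}(y)-P_{\mu_s^t}(x)=\int_0^1\langle p_{\mu_s^t}(x+r(y-x)),y-x\rangle\,dr$, the error against the target $\iint\langle p_m(x),y-x\rangle\,d\pi_t$ becomes
\[A_t:=\iint\int_0^1\int_0^1\langle p_{\mu_s^t}(x+r(y-x))-p_m(x),y-x\rangle\,dr\,ds\,d\pi_t.\]
Hölder's inequality with conjugate exponents $p,q$ bounds $|A_t|/t$ by $(W_q^{\pi_t}(m,\mu_t)/t)$ times the $L^p(\pi_t)$-norm of the integrand. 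Admissibility gives $W_q^{\pi_t}/t\le C$, so the conclusion reduces to showing the $L^p(\pi_t)$-norm vanishes as $t\to 0^+$, uniformly in $r,s\in[0,1]$. Splitting the difference at $z:=x+r(y-x)$ as $[p_{\mu_s^t}(z)-p_m(z)]+[p_m(z)-p_m(x)]$, the first summand vanishes uniformly in $z$ by uniform continuity of $(m,z)\mapsto p_m(z)$ (using $\mu_s^t\to m$ narrowly, uniformly in $s$), while the second's $L^p(\pi_t)$-norm vanishes by the narrow convergence $\pi_t\rightharpoonup(\mathrm{Id},\mathrm{Id})\sharp m$ applied to the continuous bounded function $(x,y)\mapsto|p_m(x+r(y-x))-p_m(x)|^p$. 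Subcases (A-1)--(A-3) then follow automatically since each of flat, transport-map (generated by a bounded Borel $\phi\in L^q_m$), and Eulerian variations are admissible by the admissibility proposition that follows Lemma \ref{lemma:intass}.

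For Part (B), case (B-1) is a direct application of Corollary \ref{cor:nonsmooth-case}: choosing any $\pi\in\Pi(m,m')$ and the flat variation $\pi_s:=(\mathrm{Id},\mathrm{Id})\sharp m+s[\pi-(\mathrm{Id},\mathrm{Id})\sharp m]$ with $\mu_s=(1-s)m+sm'$, the sharp ``$=0$'' form of the corollary applies since $P_m\in C^1(\mathbb T^d)=C^1_c(\mathbb T^d)$ on the compact torus, yielding
\[\lim_{s\to 0^+}\frac{U(\mu_s)-U(m)}{s}=\lim_{s\to 0^+}\frac{1}{s}\int_{\mathbb T^d}P_m\,d(\mu_s-m)=\int_{\mathbb T^d}P_m\,d(m'-m),\]
which is the CDLL identity with $\delta U/\delta m(m,y):=P_m(y)-\int P_m\,dm$; continuity in $(m,y)$ follows from continuity of $\nabla P_m$ once a normalization of $P_m$ is fixed (e.g., by prescribing its value at a base point and integrating the gradient on the torus). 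Case (B-3) reduces immediately to (B-1) via Lemma \ref{lemma:intass} (3), since flat variations form a subclass of Eulerian ones. For case (B-2), I would invoke Proposition \ref{prop:infbeh}: its hypotheses are exactly our assumptions, and applying it along the $W_1$-Lipschitz curve $\mu_\tau=(1-\tau)m+\tau m'$ (Lipschitz by the argument in Lemma \ref{lemma:intass} (3)) gives $U(\mu_s)-U(m)=\int_0^s\int\langle\nabla P_{\mu_\tau},v_\tau\rangle\,d\mu_\tau\,d\tau$. Testing the continuity equation $\partial_\tau\mu_\tau+\mathrm{div}(v_\tau\mu_\tau)=0$ against $P_{\mu_\tau}$ at fixed $\tau$ and integrating by parts (noting $\partial_\tau\mu_\tau=m'-m$) converts the inner integral into $\int P_{\mu_\tau}\,d(m'-m)$, and Lebesgue differentiation combined with continuity of $m\mapsto P_m$ sends the limit as $s\to 0^+$ to $\int P_m\,d(m'-m)$, which is the CDLL identity.

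The main obstacle is Part (A): securing the estimate $|A_t|=o(t)$ uniformly over \emph{all} admissible variations simultaneously, rather than only for the specific classes (A-1)--(A-3). The decoupling afforded by Hölder between the linear amplitude $W_q^{\pi_t}/t\le C$ and a vanishing $L^p(\pi_t)$-norm is the crux; it works precisely because continuity of $(m,z)\mapsto p_m(z)$ on the compact product space provides a uniform modulus of continuity that applies across the entire variation family.
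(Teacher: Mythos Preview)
Your proposal is correct and, for Part B, essentially reproduces the paper's argument: B-1 via Corollary~\ref{cor:nonsmooth-case}, B-3 reduced to B-1 through Lemma~\ref{lemma:intass}(3), and B-2 via Proposition~\ref{prop:infbeh} applied to the linear interpolation, then testing the continuity equation against $P_{\mu_\tau}$ exactly as the paper does.

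For Part A your route is genuinely different and somewhat cleaner. The paper writes $P_{m_{\tau,s}}(y)-P_{m_{\tau,s}}(x)=\langle p_{m_{\tau,s}}(x),y-x\rangle+g(\tau,s,y,x)$, estimates $g$ by the Mean Value Theorem, and is then left with a residual term $B(s)$ comparing $\psi(\tau)p_{m_{\tau,s}}(x)$ with $p_m(x)$; this residual does \emph{not} vanish directly, so the paper introduces an auxiliary function $\psi\in C^0([0,1];[0,1])$, obtains $\limsup_{s\to0}B(s)\le C\|p_m\|_{L^p_m}\int_0^1|\psi(\tau)-1|\,d\tau$, and only concludes after taking the infimum over all such $\psi$. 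Your approach bypasses this entirely: by writing $P_{\mu_s^t}(y)-P_{\mu_s^t}(x)=\int_0^1\langle p_{\mu_s^t}(x+r(y-x)),y-x\rangle\,dr$ via the fundamental theorem of calculus, the whole error collapses into the single quantity $A_t$, and the splitting $[p_{\mu_s^t}(z)-p_m(z)]+[p_m(z)-p_m(x)]$ (with $z=x+r(y-x)$) handles both the measure dependence and the spatial dependence in one stroke, uniformly in $(r,s)$ thanks to the bound $|p_m(z)-p_m(x)|\le\omega(|y-x|)$. The payoff is that you avoid the $\psi$-infimum device; the cost is that you must keep track of the extra parameter $r$, but as you note this is harmless since the modulus of continuity of $p_m$ gives an $r$-independent majorant.
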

\begin{proof} 
	In order to prove part A), we assume that $U$ is $C^1$ with derivative $P_m$. According to Formula 2.2 p. 29 in \cite{CDLL}, in this case it holds
	\begin{align}\label{eq:restint}  
		U(m')-U(m)=\int_0^1\int_{\mathbb T^d}\dfrac{\delta U}{\delta m}((1-\tau)m+\tau m',y)\,d(m'-m)(y)\,d\tau,&&\forall m,m'\in\mathscr P(\mathbb T^d).
	\end{align} 
	Let now $m\in \mathscr P(\mathbb T^d)$ be fixed and $\boldsymbol\pi=\{\pi_s\}_{s\in [0,1]}$ be an admissible variation from $m$.
	For any $\tau\in [0,1]$ and $\psi\in C^0([0,1];[0,1])$ with $\psi(0)=0$ and $\psi(1)=1$ define 
	\begin{align*}
		\pi_{\tau,s}=(\mathrm{Id},\mathrm{Id})\sharp m+\psi(\tau)(\pi_s-(\mathrm{Id},\mathrm{Id})\sharp m),&&m_{\tau,s}=\mathrm{pr}_2\sharp\pi_{\tau,s},&&m_s=\mathrm{pr}_2\sharp\pi_s,
	\end{align*}
	Notice that $m_{\tau,s}=m+\psi(\tau)(m_s-m)$ and so $m_s=m_{1,s}$.
	\par\medskip\par 
	For every integrable function $k:I\times I\times\mathbb T^d\times\mathbb T^d\to\mathbb R$ such that $k(\tau,s,x,x)=0$ it holds
	\begin{equation}\label{eq:relsymmpart}
		\iint_{\mathbb T^d\times\mathbb T^d}k(\tau,s,y,x)\,d\pi_{\tau,s}(x,y)=\psi(\tau)\iint_{\mathbb T^d\times\mathbb T^d}k(\tau,s,y,x)\,d\pi_{s}(x,y)
	\end{equation}
	Define the continuous function $g:[0,1]\times [0,1]\times\mathbb T^d\times\mathbb T^d\to\mathbb R$ by
	\[g(\tau,s,y,x):=P_{m_{\tau,s}}(y)-P_{m_{\tau,s}}(x)-\langle p_{m_{\tau,s}}(x),y-x\rangle,\]
	and notice that $g(\tau,s,x,x)=0$.
	\par\medskip\par
	By the Mean Value Theorem, there exists $\lambda\in [0,1]$ such that 
	\begin{align*}
		g(\tau,s,y,x)=&g(\tau,s,y,x)-g(\tau,s,x,x)=\langle \partial_y g(\tau,s,x+\lambda(y-x),x),y-x\rangle\\
		=&\langle p_{m_{\tau,s}}(x+\lambda(y-x))-p_{m_{\tau,s}}(x),y-x\rangle.
	\end{align*}
	Since $(m,z)\mapsto p_{m}(z)$ is continuous on the compact $\mathscr P(\mathbb T^d)\times \mathbb T^d$, it is uniformly continuous,
	and in particular there exists a continuous incresing function $\omega:[0,+\infty[\to [0,+\infty[$ such that $\omega(0)=0$ and
	\begin{align*}
		|p_{m_{\tau_1,s_1}}(z_1)-p_{m_{\tau_2,s_2}}(z_2)|\le \omega(W_p(m_{\tau_1,s_1},m_{\tau_2,s_2})+|z_1-z_2|),
	\end{align*}
	for all $(\tau_i,s_i,z_i)\in [0,1]\times[0,1]\times \mathbb T^d$, $i=1,2$.
	\par\medskip\par
	We thus obtain 
	\[|g(\tau,s,y,x)|\le |y-x|\cdot \omega(\lambda|y-x|)\le |y-x|\cdot \omega(|y-x|).\]
	\par\medskip\par
	According to \eqref{eq:restint}, we have
	\begin{align*}
		\dfrac 1s\cdot\left[U(m_s)-U(m)-\iint_{\mathbb T^d \times \mathbb T^d}\langle p_m(x), y -x \rangle\, d\pi_s (x,y)\right]\hspace{-8cm}&\\
		=&\dfrac 1s\int_0^1\int_{\mathbb T^d}P_{m_{\tau,s}}(y)\,d(m_{\tau,s}-m)(y)\,dt -\dfrac 1s \iint_{\mathbb T^d \times \mathbb T^d} \langle p_m(x), y -x \rangle\,d\pi_s (x,y)\\
		=&\dfrac 1s\int_0^1\iint_{\mathbb T^d \times \mathbb T^d}\left[P_{m_{\tau,s}}(y) -P_{m_{\tau,s}}(x)\right]\,d\pi_{\tau,s}(x,y)\,d\tau - \dfrac 1s\iint_{\mathbb T^d \times \mathbb T^d}\langle p_m(x), y -x \rangle\, d\pi_s (x,y)\\
		=&\dfrac 1s\int_0^1\iint_{\mathbb T^d \times \mathbb T^d}g(\tau,s,y,x)\,d\pi_{\tau,s}(x,y)\,d\tau+\\
		&+\dfrac 1s\int_0^1\iint_{\mathbb T^d \times \mathbb T^d}\langle p_{m_{\tau,s}}(x),y-x\rangle\,d\pi_{\tau,s}(x,y)\,d\tau-\dfrac 1s\iint_{\mathbb T^d \times \mathbb T^d}\langle p_m(x), y -x \rangle\, d\pi_s (x,y)\\
	\end{align*}
	The behaviour as $s\to 0^+$ of the first term can be estimated by H\"older's inequality, by \eqref{eq:relsymmpart} since $g(\tau,s,x,x)=0$, and using the fact that $|\psi(\tau)|\le 1$, as
	\begin{align*}
		A(s):=&\left|\dfrac 1s\int_0^1\iint_{\mathbb T^d \times \mathbb T^d}g(\tau,s,y,x)\,d\pi_{\tau,s}(x,y)\,d\tau\right|
		=\left|\dfrac1s\int_0^1\iint_{\mathbb T^d \times \mathbb T^d}\psi(\tau)g(\tau,s,y,x)\,d\pi_{s}(x,y)\,d\tau\right|\\
		\le&\dfrac{1}{s}\int_0^1 \iint_{\mathbb T^d \times \mathbb T^d}\psi(\tau)\omega(|x-y|)\cdot |x-y|\,d\pi_{s}(x,y)\,d\tau\\
		\le&\dfrac{W_q^{\boldsymbol\pi}(m,m_s)}{s}\cdot  \left(\iint_{\mathbb T^d \times \mathbb T^d}\omega^p(|x-y|)\,d\pi_{s}(x,y)\right)^{1/p}.
	\end{align*}
	Recalling the definition of admissible variation (Definition \ref{def:adm-var}), as $s\to 0^+$,
	the term $\dfrac{W_q^{\boldsymbol\pi}(m,m_s)}{s}$ is uniformly bounded, and by Dominated Convergence Theorem
	\[\lim_{s\to 0^+}\iint_{\mathbb T^d \times \mathbb T^d}\omega^p(|x-y|)\,d\pi_{s}(x,y)=0,\]
	since $(x,y)\mapsto \omega^p(|x-y|)$ is bounded and continuous and $\pi_s\rightharpoonup (\mathrm{Id},\mathrm{Id})\sharp m$. Thus $A(s)\to 0$.
	\par\medskip\par
	Concerning the other terms, with similar estimates we have
	\begin{align*}
		B(s):=&\left|\dfrac 1s\int_0^1\iint_{\mathbb T^d \times \mathbb T^d}\langle p_{m_{\tau,s}}(x),y-x\rangle\,d\pi_{\tau,s}(x,y)\,d\tau-\dfrac 1s\iint_{\mathbb T^d \times \mathbb T^d}\langle p_m(x), y -x \rangle\, d\pi_s (x,y)\right|\\
		=&\left|\dfrac 1s\int_0^1\psi(\tau)\iint_{\mathbb T^d \times \mathbb T^d}\langle p_{m_{\tau,s}}(x),y-x\rangle\,d\pi_{s}(x,y)\,d\tau-\dfrac 1s\iint_{\mathbb T^d \times \mathbb T^d}\langle p_m(x), y -x \rangle\, d\pi_s (x,y)\right|\\
		=&\left|\dfrac 1s\int_0^1\iint_{\mathbb T^d \times \mathbb T^d}\langle \psi(\tau) p_{m_{\tau,s}}(x)-p_m(x),y-x\rangle\,d\pi_{s}(x,y)\,d\tau\right|\\
		\le&\dfrac 1s\iint_{\mathbb T^d \times \mathbb T^d}\left|\int_0^1[\psi(\tau) p_{m_{\tau,s}}(x)\,d\tau-p_m(x)]\,d\tau\right|\cdot|y-x|\,d\pi_{s}(x,y)\\
		\le&\dfrac{W_q^{\boldsymbol\pi}(m,m_s)}{s}\cdot \left(\int_{\mathbb T^d}\left|\int_0^1[\psi(\tau) p_{m_{\tau,s}}(x)-p_m(x)]\,d\tau\right|^p\,dm(x)\right)^{1/p}
	\end{align*}
	As before, the first term is uniformly bounded as $s\to 0^+$. Concerning the other, we notice that $m_{\tau,s}\to m$ for every $\tau$ as $s\to 0^+$. By Dominated Convergence Theorem
	we conclude that 
	\begin{align*}
		\limsup_{s\to 0^+}B(s)\le&C\cdot \int_0^1 |\psi(\tau)-1|\,d\tau\cdot \|p_m\|_{L^p_m},
	\end{align*}
	where $C$ is the constant appearing in item c. of Definition \ref{def:adm-var}.
	\par\medskip\par  
	Thus we proved that for any $\psi\in C^0([0,1];[0,1])$ with $\psi(0)=0$ and $\psi(1)=1$ it holds
	\begin{align*}
		\limsup_{s\to 0^+}\dfrac 1s&\cdot\left|U(m_s)-U(m)-\iint_{\mathbb T^d \times \mathbb T^d}\langle p_m(x), y -x \rangle\, d\pi_s (x,y)\right|\le\limsup_{s\to 0^+} A(s)+B(s)\\
		\le&C\cdot \int_0^1 |\psi(\tau)-1|\,d\tau\cdot \|p_m\|_{L^p_m},
	\end{align*}
	and by passing to the infimum on $\psi\in C^0([0,1];[0,1])$ with $\psi(0)=0$ and $\psi(1)=1$ we have that the last expression vanishes,
	hence $p_m(\cdot)$ is a derivative of $U(\cdot)$ at $m$ w.r.t. $\boldsymbol\pi$.
	\par\medskip\par
	In order to prove part B), fix $m\in\mathscr P(\mathbb T^d)$ and let $p_m(y):= \nabla P _m(y)$ be as in the statement of the part B) of the statement.
	\par\medskip\par
	Notice that the assumptions B-3) implies both B-2) and B-1). Thus it is sufficient to prove B-1) and B-2).
	\par\medskip\par
	We prove B-1). Fix $\pi\in\Pi(m,m')$, and define the family of flat variations $\boldsymbol\pi=\{\pi_s\}_{x\in [0,1]}$ with 
	$\pi_s := (1-s)(\mathrm{Id},\mathrm{Id})\sharp m +s\pi$. Set $m_s:=\mathrm{pr}_2\sharp\pi_s=(1-s)m+sm'$. 
	By assumption, $p_m$ is the derivative also with respect with this family, and so by Corollary \ref{cor:nonsmooth-case} we have
	\begin{align*}
		0=&\lim_{s\to 0^+}\dfrac{1}{s}\left[U(m_s)-U(m)-\int_{\mathbb T^d} P_m(z)\,d(m_s-m)\right]\\
		=&\lim_{s\to 0^+}\left[\dfrac{U(m_s)-U(m)}{s}-\int_{\mathbb T^d} P_m(z)\,d(m'-m)\right],
	\end{align*}
	so 
	\[\lim_{s\to 0^+}\dfrac{U(m_s)-U(m)}{s}=\int_{\mathbb T^d} P_m(z)\,d(m'-m),\]
	Notice that the same formula holds if we replace $P_m(z)$ with $P_m(z)+k$ for any $k\in\mathbb R$. By choosing $\displaystyle k=-\int_{\mathbb T^d}P_m(z)\,dm(z)$, we have that
	$U$ is $C^1$ in the sense of Definition \ref{def:CDLLder} and $\displaystyle\dfrac{\delta U}{\delta m}(m,y)=P_m(y)-\int_{\mathbb T^d}P_m(z)\,dm(z)$, so the normalization condition
	on $\displaystyle\dfrac{\delta U}{\delta m}(m,y)$ is also satisfied.
	\par\medskip\par
	We prove now B-2). Let $m,m'\in\mathscr P(\mathbb T^d)$ and set $m_s=(1-s)m+sm'$. The curve $s\mapsto m_s$ is clearly absolutely continuous (indeed, $W_1$-Lipschitz continuous), therefore,  
	by Proposition \ref{prop:infbeh}, for all $s,t\in I$ we have 
	\[U(\mu_t)-U(\mu_s)=\int_s^t \int_{\mathbb T^d}\langle p_{\mu_\tau}(x),v_\tau(x)\rangle\,d\mu_\tau(x)\,d\tau,\]
	where $v=v_t(x)$ is characterized to be the unique vector field such that \eqref{eq:conteq} holds, and for a.e. $t\in I$ we have that 
	$v_t\in L^p_{\mu_t}(\mathbb T^d)$ with $\|v_t\|_{L^p_{\mu_t}}\le |\dot\mu_t|$, where $t\mapsto |\dot\mu_t|$ is the metric derivative of $\boldsymbol\mu$.
	Let $\varphi\in C^1$. Since $s\mapsto \int_{\mathbb T^d}\varphi(x)\, dm_s(x)$ is absolutely continuous, for a.e. $s\in I$ we have 
	\[\dfrac{d}{d\tau}\int_{\mathbb T^d}\varphi(x)\, dm_\tau(x)=\int_{\mathbb T^d}\varphi(x) \,d(m-m')=\int_{\mathbb T^d}\langle \nabla\varphi(x),v_\tau(x)\rangle\,d\mu_\tau\]
	Thus for a.e. $t\in I$ and all $\varphi\in C^1$ it holds  
	\[\int_{\mathbb T^d}\varphi(x) \,d(m-m')=\int_{\mathbb T^d}\langle \nabla\varphi(x),v_s(x)\rangle\,d\mu_s.\]
	Thus
	\[U(\mu_t)-U(\mu_s)=\int_s^t \int_{\mathbb T^d} P_{\mu_\tau}(x)\,d(m'-m)(x)\,d\tau,\]
	By taking $s=0$, dividing by $t$, sending $t\to 0^+$ and exploiting the continuity of $(\tau,x)\mapsto P_{\mu_\tau}(x)$, we obtain
	\[\lim_{t\to 0}U(\mu_t)-U(\mu)=\int_{\mathbb T^d} P_{\mu}(x)\,d(m'-m)(x),\]
	as desired.
\end{proof}

\subsection{Super/sub differentials} 
When the function $U$ is not smooth
enough to be $C^1$ in the sense of Definition \ref{def:CDLLder},our concept of variations allows to define super and sub differential.
\begin{definition}[Sub/superdifferential]
Let $U:[0,T]\times \mathbb{T}^d\times \mathscr{P}_2(\mathbb{T}^d) \rightarrow \mathbb{T}^d$, we define the superdifferential of $U$ to be the set 
$\partial^+ U(t_0,x_0,\mu_0)$ of all $(p_{t_0},p_{x_0},p_{\mu_0}(\cdot))$ such that
{\small\begin{equation}
\limsup_{\substack{h\rightarrow 0^+\\ |y|\rightarrow 0}} \dfrac{\displaystyle U(t_0+h,\xi_0+y,\mathrm{pr}_2\sharp\pi_h)-U(t_0,\xi_0,\mu_0)- p_{t_0}h-\langle p_{x_0},y\rangle -\int_{\mathbb{T}^d\times\mathbb{T}^d } \langle p_{\mu_0}(x),\xi-x\rangle \, d \pi_h(x,\xi)}{h+|y|}\leq 0,
\end{equation}}
for every $\{\pi_h\}_{h\in [0,1]}$ family of admissible variations from $\mu_0$.
\par\medskip\par
Similarly, we define the subdifferential of $U$ to be the set $\partial^- U(t_0,x_0,\mu_0)$ of all $(p_{t_0},p_{x_0},p_{\mu_0}(\cdot))\in\mathbb R\times\mathbb R^d\times L^2_{\mu_0}$ such that
{\small\begin{equation}
\liminf_{\substack{h\rightarrow 0^+\\ |y|\rightarrow 0}} \dfrac{\displaystyle U(t_0+h,\xi_0+y,\mathrm{pr}_2\sharp\pi_h)-U(t_0,\xi_0,\mu_0)- p_{t_0}h-\langle p_{x_0},y\rangle -\int_{\mathbb{T}^d\times\mathbb T^d} \langle p_{\mu_0}(x),\xi-x\rangle \, d \pi_h(x,\xi)}{h+|y|}\geq 0.
\end{equation}}
\end{definition}

Observe that the notion of sub/super differential introduced in \cite{CQ} coincide with the notion of sub/super differential defined through all transport map variations.

\begin{remark}
	When admissible variations are limited to locally Lipschitz transport maps variations, our notion of sub/super differential recovers the contingent directional derivative of Definition 2.15 in \cite{BadF}.
\end{remark}

\begin{definition}[First order HJB equation]\label{def:viscosol}
	We consider an equation in the form
	\begin{equation}\label{eq:HJB}
		\partial_t w(t,\xi,\mu) + H (t,\xi,\mu, D_x w, D_\mu w) = 0.
	\end{equation}
	Let $U:[0,T]\times \mathbb{T}^d\times \mathscr{P}_2(\mathbb{T}^d) \rightarrow \mathbb{T}^d$, we say that
	\begin{itemize}
		\item $U$ is a subsolution of  \eqref{eq:HJB} iff for all $(t,x,\mu) \in [0,T[\times \mathbb{T}^d \times \mathscr{P}_2(\mathbb{T}^d)$ and $(p_t,p_x,p_\mu)\in \partial^+ U(t,x,\mu)$ it holds
		\[p_t + H(t,\xi,\mu,p_x,p_\mu) \geq 0.\]
		\item $U$ is a supersolution  of  \eqref{eq:HJB} iff for all $(t,x,\mu) \in [0,T[\times \mathbb{T}^d \times \mathscr{P}_2(\mathbb{T}^d)$ and $(p_t,p_x,p_\mu)\in \partial^- U(t,x,\mu)$ it holds
		\[p_t + H(t,\xi,\mu,p_x,p_\mu) \leq 0.\]
	\end{itemize}
\end{definition}

\section{Model description}\label{sec:model}

We consider a multiagent system where a central planner controls a crowd of agents (the \emph{followers}) and a distinguished agent (the \emph{leader}).
The dynamics of the leader is affected by the current time, the current position of the leader and by the followers' crowd configuration. 
The dynamics of each follower is affected by the current time, by his/her current position, by the position of the leader, and by the the followers' crowd configuration.
Neither the leader nor any follower can single out an agent in the crowd of followers. 
The aim of the central planner is to minimize a global cost of Bolza type in a compact interval of time $I$.
\par\medskip\par

Let $I=[a,b]$ be a compact interval of $\mathbb R$, $k\in L^1(I)$. We set $X=\mathbb T^d$, $Y=C^0(I;X)$, $Z=C^0(I;\mathscr P_2(X))$.

Suppose that
\begin{itemize}
\item[($D_F$)] $F:I\times X\times\mathscr P_2(X)\rightrightarrows X$ is a Borel measurable set valued map,
with nonempty compact convex images, satisfying
\begin{align*}
d_H(F(t,x_1,\theta_1),F(s,x_2,\theta_2))\le k \left(|t-s|+|x_1-x_2|+W_2(\theta_1,\theta_2)\right)
\end{align*}
for all $x_i\in X$, $\theta_i\in\mathscr P_2(X)$, $i=1,2$, $t\in I$;
\item[($D_G$)] $G:I\times X\times X\times\mathscr P_2(X)\rightrightarrows X$ is a Borel mesurable set valued map,
with nonempty compact convex images, satisfying
\begin{align*}
d_H(G(t,x_1,y_1,\theta_1),G(s,x_2,y_2,\theta_2))\le&k \left(|t-s|+|x_1-x_2|+|y_1-y_2|+W_2(\theta_1,\theta_2)\right),\\
\end{align*}
for all $x_i,y_i\in X$, $\theta_i\in\mathscr P_2(X)$, $i=1,2$, $t\in I$.
\end{itemize}

\par\medskip\par
We consider the following differential problem in the unknowns $(\xi(\cdot),\boldsymbol\mu=\{\mu_t\}_{t\in I})\in Y\times Z$
\begin{align}\label{def:diffprob}\begin{cases}
\dot \xi(t)\in F(t,\xi(t),\mu_t),&\textrm{ for a.e. $t\in I$},\\\\
\partial_t \mu_t+\mathrm{div}(v_t\mu_t)=0,&\textrm{ in the sense of distributions},
\end{cases}\end{align}
where the coupling between the two equations is given by assuming that the vector field $v=v_t(y)$ 
can be any Borel selection of the set-valued map $(t,y)\mapsto G(t,\xi(t),y,\mu_t)$. 

\par\medskip\par
Our aim is to prove the following result about the existence of solutions and regularity of the solution map.

\begin{proposition}
Under the above assumptions on $F,G$, define the set valued map 
\[\mathscr A_I:X\times\mathscr P_2(X)\rightrightarrows Y\times Z\]
by setting
\[\mathscr A_I(\bar x,\bar\mu):=\left\{(\xi(\cdot),\boldsymbol\mu)\in Y\times Z:\,t\mapsto(\xi(t),\mu_t)\textrm{ solves }
\eqref{def:diffprob}\textrm{ with }(\xi(a),\mu_a)=(\bar x,\bar\mu)\right\}.\]
Then $\mathscr A_I(\cdot)$ is a Lipschitz continuous set valued map with nonempty compact images.
\end{proposition}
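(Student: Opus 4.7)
The plan is to treat the coupled system in a purely Lagrangian fashion: lift the follower part to a measure $\boldsymbol\eta\in\mathscr P(\Gamma_I)$ on trajectories, so that $\mu_t=\hat e_t\sharp\boldsymbol\eta$ is automatically a solution of the continuity equation (by the superposition principle), and then combine standard differential inclusion techniques for $\xi(\cdot)$ with a fixed point argument on the pair $(\xi,\boldsymbol\eta)$. A central observation is that compactness of $\mathbb T^d$ together with Lipschitz continuity in Hausdorff distance of $F$ and $G$ implies uniform boundedness: there exists $M>0$ such that $F(t,x,\mu)\cup G(t,x,y,\mu)\subseteq\bar B(0,M)$ on the whole domain. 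Hence every candidate solution has $|\dot\xi|\le M$ a.e., and every admissible velocity field $v_t$ in the continuity equation satisfies $|v_t|\le M$, which gives $W_p(\mu_t,\mu_s)\le M|t-s|$. This provides a priori equi-Lipschitz bounds that will drive all compactness arguments.

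For existence, I would fix $(\bar x,\bar\mu)$ and work on the compact convex set $\mathscr K$ of pairs $(\xi,\boldsymbol\eta)\in Y\times\mathscr P(\Gamma_I)$ with $\xi(a)=\bar x$, $\hat e_a\sharp\boldsymbol\eta=\bar\mu$, $|\dot\xi|\le M$ a.e., and $\boldsymbol\eta$ concentrated on $M$-Lipschitz curves; this set is compact (Ascoli--Arzel\`a for $\xi$, Prokhorov plus support control for $\boldsymbol\eta$) and convex. Define the multivalued map $\Phi:\mathscr K\rightrightarrows\mathscr K$ sending $(\tilde\xi,\tilde{\boldsymbol\eta})$ with $\tilde\mu_t:=\hat e_t\sharp\tilde{\boldsymbol\eta}$ to all $(\xi,\boldsymbol\eta)$ such that $\dot\xi(t)\in F(t,\xi(t),\tilde\mu_t)$ with $\xi(a)=\bar x$, and $\boldsymbol\eta$ is concentrated on curves $\gamma$ with $\gamma(a)$ distributed as $\bar\mu$ and $\dot\gamma(t)\in G(t,\tilde\xi(t),\gamma(t),\tilde\mu_t)$. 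Nonemptiness of $\Phi(\tilde\xi,\tilde{\boldsymbol\eta})$ is standard: the first component is classical Filippov existence for differential inclusions, while for the second one selects, for each starting point $x\in\mathbb T^d$, a solution of the Carath\'eodory differential inclusion $\dot\gamma\in G(\cdot,\tilde\xi(\cdot),\gamma,\tilde\mu_\cdot)$, and glues these together via the disintegration of $\bar\mu$. Convexity of the images of $\Phi$ follows from convexity of $F,G$, and upper semicontinuity of $\Phi$ from the Lipschitz dependence of $F,G$ combined with the stability of differential inclusions under uniform convergence. The Kakutani--Fan--Glicksberg theorem then yields a fixed point, which by construction lies in $\mathscr A_I(\bar x,\bar\mu)$.

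Compactness of $\mathscr A_I(\bar x,\bar\mu)$ follows the same pattern: a sequence of solutions $(\xi_n,\boldsymbol\mu_n)$ has uniformly Lipschitz $\xi_n$ (so Ascoli--Arzel\`a applies), and the associated $\boldsymbol\eta_n$ are supported on $M$-Lipschitz curves (hence tight); any limit pair satisfies the inclusions by convexity of $F,G$ and Mazur's lemma, and the identity $\mu_t=\hat e_t\sharp\boldsymbol\eta$ passes to the limit by continuity of $\hat e_t$.

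For the Lipschitz continuity of $\mathscr A_I$, I would use a Filippov-type coupling argument. Given $(\bar x_1,\bar\mu_1)$, $(\bar x_2,\bar\mu_2)$ and a solution $(\xi_1,\boldsymbol\mu_1)$ lifted to $\boldsymbol\eta_1\in\mathscr P(\Gamma_I)$, fix an optimal transport plan $\bar\pi\in\Pi(\bar\mu_1,\bar\mu_2)$ and, for every pair $(x_1,x_2)$ in its support, pair the trajectory $\gamma_1$ starting at $x_1$ with a trajectory $\gamma_2$ starting at $x_2$ whose velocity is the closest point in $G(t,\xi_2(t),\gamma_2(t),\mu_{2,t})$ to $\dot\gamma_1(t)$, while $\xi_2$ is built from a Filippov selection comparing $\dot\xi_1$ with $F(t,\xi_2(t),\mu_{2,t})$. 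The Lipschitz bounds on $F$ and $G$, coupled with the elementary estimate $W_p(\mu_{1,t},\mu_{2,t})\le\bigl(\int|\gamma_1(t)-\gamma_2(t)|^p\,d\boldsymbol\eta_{12}\bigr)^{1/p}$ along the built joint measure $\boldsymbol\eta_{12}$, yield an integral inequality in $|\xi_1-\xi_2|+\bigl(\int|\gamma_1-\gamma_2|^p\,d\boldsymbol\eta_{12}\bigr)^{1/p}$ to which Gr\"onwall's lemma applies. The hard step is precisely this coupling: one must choose measurably, along the transport plan $\bar\pi$, a companion trajectory whose velocity optimally tracks the reference one, and verify that the resulting $\boldsymbol\eta_2$ is genuinely associated with a Borel vector field satisfying \eqref{eq:intcond}; this is the usual Filippov-type measurable selection, adapted to the Wasserstein setting through the superposition principle. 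Once the Gr\"onwall estimate is in place, one reads off the Lipschitz constant of $\mathscr A_I$ explicitly in terms of $k$, $\|k\|_{L^1(I)}$ and $|I|$.
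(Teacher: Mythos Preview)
Your existence argument has a genuine gap: the values of $\Phi$ are \emph{not} convex, so Kakutani--Fan--Glicksberg does not apply. For a fixed $(\tilde\xi,\tilde{\boldsymbol\eta})$, the first component of $\Phi(\tilde\xi,\tilde{\boldsymbol\eta})$ is the solution set of the differential inclusion $\dot\xi(t)\in F(t,\xi(t),\tilde\mu_t)$, $\xi(a)=\bar x$. Even with $F$ Lipschitz and convex-valued, this solution set is only an $R_\delta$ (hence acyclic) subset of $Y$, not a convex one. A concrete obstruction: take $F(x_1,x_2)=\{\cos x_2\}\times[-1,1]$ on $\mathbb T^2$, initial point $0$; the trajectories $(\sin t,\,t)$ and $(\sin t,\,-t)$ are both solutions, but their midpoint $(\sin t,\,0)$ satisfies $\dot x_1=\cos t\neq 1=\cos x_2$ and is not. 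Convexity of the images of $F,G$ gives convexity of the \emph{velocity} sets, not of the \emph{solution} sets. You could try to rescue the scheme with a fixed-point theorem for acyclic-valued maps (Eilenberg--Montgomery type), but that is a different and heavier tool than what you invoke.

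The paper sidesteps this entirely by using a \emph{set-valued contraction} instead of a topological fixed-point theorem. It freezes both the leader trajectory $w$ and the follower curve $\boldsymbol\theta$ and considers the set-valued map $(w,\boldsymbol\theta)\mapsto S_F^{\boldsymbol\theta,w}(\bar x)\times\boldsymbol{\Upsilon_G}^{\boldsymbol\theta,w}(\bar\mu)$; after passing to an exponentially weighted sup-norm $\sup_t e^{-Mt}|\cdot|$ with $M$ large, Filippov's estimate makes this map $\tfrac12$-Lipschitz in Hausdorff distance. Nadler's contraction theorem then gives fixed points (no convexity needed) and, via its parametric version, closed graph and Lipschitz dependence of the fixed-point set on $(\bar x,\bar\mu)$---which is exactly the Lipschitz continuity of $\mathscr A_I$. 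Your Gr\"onwall coupling for the Lipschitz part is in the right spirit and close to what the paper does inside this framework, but the existence step needs either the contraction route or a genuinely different fixed-point principle.
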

\begin{proof}
The strategy of the proof will be to reformulate the differential problem \eqref{def:diffprob} as a fixed point problem
of a suitable set valued map.
\par\medskip\par
To this aim, for any given $\boldsymbol\theta=\{\theta_t\}_{t\in I}\in Z$ and $w\in Y$, we define
\begin{align*}
S_F^{\boldsymbol\theta,w}(x):=&\left\{\gamma\in AC(I;X):\, \dot \gamma(t)\in F(t,w(t),\theta_t), \textrm{ for a.e. $t\in I$},\,\gamma(0)=x\right\}\subseteq Y,\\
S_G^{\boldsymbol\theta,w}(x):=&\left\{\xi\in AC(I;X):\, \dot \xi(t)\in G(t,w(t),\xi(t),\theta_t), \textrm{ for a.e. $t\in I$},\,\xi(0)=x\right\},\\
\boldsymbol{S_G}^{\boldsymbol\theta,w}(\mu):=&\left\{\boldsymbol\eta\in\mathscr P(X\times Y):\, \boldsymbol\eta(\mathrm{graph}\,S_G^{\boldsymbol\theta,w})=1,\,\mathrm{pr}_1\sharp\boldsymbol\eta=\mu\right\},\\
\boldsymbol{\Upsilon_G}^{\boldsymbol\theta,w}(\mu):=&\left\{\boldsymbol\mu=\{\mu_t\}_{t\in I}\in Z:\, \mu_t=e_t\sharp\boldsymbol\eta, \boldsymbol\eta\in \boldsymbol{S_G}^{\boldsymbol\theta,w}(\mu)\right\},
\end{align*}
where the evaluation map $e_t:X\times Y\to X$ is the $1$-Lipschiz continuous map $e_t(x,\gamma)=\gamma(t)$.
\par\medskip\par
We will prove that 
\[\mathscr A_I(x,\mu)=\left\{(w,\boldsymbol\theta)\in Y\times Z:\,(w,\boldsymbol\theta)\in S_F^{\boldsymbol\theta,w}(x)\times\boldsymbol{\Upsilon_G}^{\boldsymbol\theta,w}(\mu)\right\},\]
i.e., the fixed point of the set valued map $(w,\boldsymbol\theta)\mapsto S_F^{\boldsymbol\theta,w}(x)\times\boldsymbol{\Upsilon_G}^{\boldsymbol\theta,w}(\mu)$.
\par\medskip\par
According to classical Filippov's Theorem for Differential Inclusions (see e.g. Theorem 1 in Chapter 2, Section 4 of \cite{AC}), for fixed $(w,\boldsymbol\theta)\in Y\times Z$
the set valued map $S_F^{\boldsymbol\theta,w}(\cdot)$ and $S_G^{\boldsymbol\theta,w}(\cdot)$ are Lipschitz continuous with compact images (contained in $AC(I;X)$). 
The map $X\times Y\ni(x,\xi)\mapsto\mathrm{dist}\left(\xi, S_G^{\boldsymbol\theta,w}(x)\right)$ is therefore continuous (see e.g. Corollary 1.4.17 in \cite{AuF}). 
\par\medskip\par
By applying Lemma \ref{lemma:projection} with $\mathbb X=X\times Y$, $\mathbb Y=Y$, $h(x,\gamma)=\gamma$, $\mathbb F(x,\gamma)=S_G^{\boldsymbol\theta,w}(x)\subseteq Y$,
there exists a Borel map $\mathrm{Pr}^{\boldsymbol\theta,w}:X\times Y\to Y$ such that $\mathrm{Pr}^{\boldsymbol\theta,w}(x,\gamma)\in S_G^{\boldsymbol\theta,w}(x)$
and 
\[\mathrm{dist}(\gamma,\mathrm{Pr}^{\boldsymbol\theta,w}(x,\gamma))=\mathrm{dist}(\gamma,S_G^{\boldsymbol\theta,w}(x)).\]
We define a Borel map $T^{\boldsymbol\theta,w}:X\times Y\times X\to (X\times Y)\times (X\times Y)$ by setting 
\[T^{\boldsymbol\theta,w}(x_1,\gamma_1,x_2):=((x_1,\gamma_1),\mathrm{Pr}^{\boldsymbol\theta,w}(x_2,\gamma_1)).\]
According to Lemma \ref{lemma:absfil} with $\mathbb X=X$, $\mathbb Y=Y$, $T=T^{\boldsymbol\theta,w}$, $E=\mathrm{Id}_{X\times Y}$, $\mathbb Z=X\times Y$,
given $\mu,\nu\in\mathscr P_2(X)$, $\boldsymbol\eta\in\mathscr P(X\times Y)$ with $\mu=\mathrm{pr}_1\sharp\boldsymbol\eta$, $\boldsymbol\pi\in \Pi_o(\mu,\nu)$ 
there exists $\boldsymbol{\hat\eta}\in \boldsymbol{S_G}^{\boldsymbol\theta,w}(\nu)$ such that 
\begin{align*}
W_2(\boldsymbol\eta,\boldsymbol{\hat\eta})\le&\left[\iint_{(X\times Y)\times (X\times Y)}d^2_{X\times Y}((x_1,\gamma_1),(x_2,\mathrm{Pr}^{\boldsymbol\theta,w}(x_2,\gamma_1)))\,d\boldsymbol\xi (x_1,\gamma_1,x_2)\right]^{1/2}\\
\le&\left(\iint_{X\times X}|x_1-x_2|^2\,d\boldsymbol\pi(x_1,x_2)\right)^{1/2}+\left(\iint_{X\times Y}\mathrm{dist}^2(\gamma,S_G^{\boldsymbol\theta,w}(x))\,d\boldsymbol\eta(x,\gamma)\right)^{1/2}\\
=&W_2(\mu,\nu)+\left(\iint_{X\times Y}\mathrm{dist}^2(\gamma,S_G^{\boldsymbol\theta,w}(x))\,d\boldsymbol\eta(x,\gamma)\right)^{1/2},
\end{align*}
where $\boldsymbol\xi\in\mathscr P(X\times Y\times X)$ satisfies $\mathrm{pr}_{12}\sharp\boldsymbol\xi=\boldsymbol\eta$ and $\mathrm{pr}_{13}\sharp\boldsymbol\xi=\boldsymbol\pi$.

In particular, if $(y,\xi)=\mathrm{pr}_2\circ T(x_1,\gamma_1,x_2)$, we have $y=x_2$, $\xi\in S_G^{\boldsymbol\theta,w}(x_2)$ and
\[d_{Y}(\gamma_1,\xi)=\mathrm{dist}\left(\gamma_1, S_G^{\boldsymbol\theta,w}(x_2)\right).\]
Take the equivalent norm  on $\mathbb R^d\times C^0$
\[\|(z_1,\zeta_1)-(z_2,\zeta_2)\|_{\mathbb R^d\times C^0}\le |z_1-z_2|+\sup_{t\in I}\{e^{-Mt}|\zeta_1(t)-\zeta_2(t)|\},\]
where $M$ is a sufficiently large constant in order to have 
\[L_M:=\sup_{t\in I}\left\{e^{-Mt}\int_a^tk(s)\,ds\cdot\exp\left(\int_a^t k(s)\,ds\right)\right\}<1/2.\]

Suppose now that $x_1,x_2\in X$, $\gamma_1\in AC(I;X)$, and $(y,\xi)=\mathrm{pr}_2\circ T^{\boldsymbol\theta,w}(x_1,\gamma_1,x_2)$.
By Filippov's Theorem for Differential Inclusions we have
\begin{multline*}
e^{-Mt}|\gamma_1(t)-\xi(t)|\le\\ \le e^{-Mt}\exp\left(\int_a^t k(s)\,ds\right) \cdot \left[|\gamma_1(0)-\xi(0)|+\int_a^t \mathrm{dist}\left(\dot\gamma(s), G(s,w(s),\gamma_1(s),\theta_s)\right)\,ds\right],\end{multline*}
leading to
\begin{align*}
\left\|\left(\mathrm{pr}_{12}-\mathrm{pr}_2\circ T^{\boldsymbol\theta,w}\right)(x_1,\gamma_1,x_2)\right\|_{\mathbb R^d\times C^0}\le\hspace{-4cm}&\\
\le&|x_1-x_2|+\dfrac 12 |\gamma_1(0)-\xi(0)|+\\
&+\sup_{t\in I}\left\{e^{-Mt}\exp\left(\int_a^t k(s)\,ds\right) \cdot \int_a^t \mathrm{dist}\left(\dot\gamma(s), G(s,w(s),\gamma_1(s),\theta_s)\right)\,ds\right\}.
\end{align*}

In particular, if $\gamma_1\in S^{\boldsymbol{\hat \theta},\hat w}(x_1)$, we have $\gamma_1(0)=x_1$ and for a.e. $s\in I$  
\[\mathrm{dist}\left(\dot\gamma(s), G(s,w(s),\gamma_1(s),\theta_s)\right)\le \mathrm{dist}_H\left(G(s,\hat w(s),\gamma_1(s),\hat\theta_s), G(s,w(s),\gamma_1(s),\theta_s)\right),\]
and therefore recalling assumption $(D_G)$
\begin{align*}
&\left\|\left(\mathrm{pr}_{12}-\mathrm{pr}_2\circ T^{\boldsymbol\theta,w}\right)(x_1,\gamma_1,x_2)\right\|_{\mathbb R^d\times C^0}\le\\
&\le\dfrac 32 |x_1-x_2|+\sup_{t\in I}\left\{e^{-Mt}\exp\left(\int_a^t k(s)\,ds\right) \cdot \int_a^t k(s) \left(|\hat w(s)-w(s)|+W_2(\hat\theta_s,\theta_s)\right)\,ds\right\}\\
&\le\dfrac 12 \left(3|x_1-x_2|+\|\hat w-w\|_{\infty}+\sup_{s\in I}W_2(\hat\theta_s,\theta_s)\right).
\end{align*}
%

According to Lemma \ref{lemma:absfil}, by choosing $T=T^{\boldsymbol\theta,w}$, given $\mu,\nu\in\mathscr P(X)$, $\boldsymbol\eta\in\mathscr P(X\times Y)$, $\boldsymbol\pi\in \Pi_o(\mu,\nu)$ such that 
\begin{itemize}
\item[a.)] $\mu=\mathrm{pr}_1\sharp\boldsymbol\eta$;
\item[b.)] for $\boldsymbol\eta$-a.e. $(x,\gamma)\in X\times Y$ we have that $\gamma\in AC(I;X)$, 
\end{itemize}
there exists $\boldsymbol{\hat \eta}\in \mathscr P(X\times Y)$ satisfying
\begin{itemize}
\item $\boldsymbol{\hat\eta}\in \boldsymbol{S_G}^{\boldsymbol{\theta},w}(\nu)$, so $\left\{e_t\sharp \boldsymbol{\hat\eta}\right\}_{t\in I}\in \boldsymbol{\Upsilon_G}^{\boldsymbol{\theta},w}(\nu)$ by definition
\item it holds
\begin{align*}
W_2(\boldsymbol\eta,\boldsymbol{\hat\eta})\le&\dfrac{3}{2} \left[\iint_{X\times X}|x_1-x_2|^2\,d\boldsymbol\pi(x_1,x_2)\right]^{1/2}+\dfrac{1}{2}\left[\|\hat w-w\|_{\infty}+\sup_{s\in I}W_2(\hat\theta_s,\theta_s)\right]\\
=&\dfrac 32 W_2(\mu,\nu)+\dfrac{1}{2}\left[\|\hat w-w\|_{\infty}+\sup_{s\in I}W_2(\hat\theta_s,\theta_s)\right].
\end{align*}
\item it holds 
$W_2(e_t\sharp\boldsymbol\eta,e_t\sharp\boldsymbol{\hat\eta})\le W_2(\boldsymbol\eta,\boldsymbol{\hat\eta})$, since $e_t$ is $1$-Lipschitz continuous.
\end{itemize}
By taking in particular $\mu=\nu$, we have that the set-valued map $(w,\boldsymbol\theta)\mapsto\boldsymbol{\Upsilon_G}^{\boldsymbol{\theta},w}(\mu)$
is $1/2$-Lipschitz continuous with respect to the Hausdorff distance.
\par\medskip\par
Similarly, given any $\hat\gamma\in AC(I;\mathbb R^d)$ with $\hat\gamma(0)=x$, according to classical Filippov Theorem
there is a curve $\gamma\in S_F^{\boldsymbol\theta^{(1)},w_1}(x)$ such that 
\[|\hat\gamma(t)-\gamma(t)|\le \exp\left(\int_a^t k(s)\,ds\right) \cdot \int_a^t \mathrm{dist}\left(\dot{\hat\gamma}(s), F(s,w_1(s),\theta^{(1)}_s)\right)\,ds.\]
In particular, if $\gamma\in S_F^{\boldsymbol{\theta^{(2)}},w_2}(x)$ it holds
\[|\hat\gamma(t)-\gamma(t)|\le \exp\left(\int_a^t k(s)\,ds\right) \cdot \int_a^t \mathrm{dist}\left(F(s,w_2(s),\theta^{(2)}_s), F(s,w_1(s),\theta^{(1)}_s)\right)\,ds,\]
and so, as before, 
\begin{align*}
\mathrm{dist}&(\hat\gamma,S_1^{\boldsymbol\theta,w_1}(x))\le\sup_{t\in I}\left\{e^{-Mt}|\hat\gamma(t)-\gamma(t)|\right\}\\
\le&\sup_{t\in I} \left\{e^{-Mt} \exp\left(\int_a^t k(s)\,ds\right) \cdot \int_a^t \mathrm{dist}\left(F(s,w_2(s),\theta^{(2)}_s), F(s,w_1(s),\theta_s)\right)\,ds\right\},\\
\le&\dfrac 12 \left(\|w_2-w_1\|_{\infty}+\sup_{s\in I}W_2(\hat\theta^{(2)}_s,\theta^{(1)}_s)\right).
\end{align*}
In the same way, it can be proved that given any $\gamma\in S_F^{\boldsymbol{\theta^{(1)}},w_1}(x)$ it holds
\[\mathrm{dist}(\gamma,S_F^{\boldsymbol{\theta^{(2)}},w_2}(x))\le \dfrac 12 \left(\|w_2-w_1\|_{\infty}+\sup_{s\in I}W_2(\hat\theta^{(2)}_s,\theta^{(1)}_s)\right),\]
thus the set valued map $(w,\boldsymbol\theta)\mapsto S_F^{\boldsymbol{\theta},w}(x)$
is Lipschitz continuous with respect to the Hausdorff distance, with Lipschitz constant less or equal to $\dfrac 12$.
\par\medskip\par
The set valued map
\[(w,\boldsymbol\theta)\mapsto S_F^{\boldsymbol{\theta},w}(x)\times \boldsymbol{\Upsilon_G}^{\boldsymbol{\theta},w}(\mu).\]
is Lipschitz continuous with constant less than $\dfrac 12<1$ from the space 
\[\{w\in \Gamma_I:\, w(0)=x\}\times \{\boldsymbol\theta\in C^0(I;\mathscr P(\mathbb R^d)):\, \theta_0=\mu\},\]
into the space of its closed and bounded subsets.
\par\medskip\par
By classical Nadler's contraction theorem (see Theorem 5 in \cite{Nad}), the set valued map above admits a fixed point, i.e.,
there exists $(\gamma,\boldsymbol\mu)\in \Gamma_I\times C^0(I;\mathscr P(\mathbb R^d))$ such that 
$\gamma\in S_F^{\boldsymbol\mu,\gamma}(x)$ and $\boldsymbol\mu\in\Upsilon_G^{\boldsymbol\mu,\gamma}(\mu)$.
Nadler's theorem provides also that the set valued map of the fixed points has closed graph (see Theorem 9 in \cite{Nad}).
\par\medskip\par
We prove now that the set of solutions $\mathscr A_I(x,\bar\mu)$ of \eqref{def:diffprob} coincides with the set of fixed points
\[(\gamma,\boldsymbol\mu)\in {S_F}^{\boldsymbol{\mu},\gamma}(\bar x)\times \boldsymbol{\Upsilon_G}^{\boldsymbol{\mu},\gamma}(\mu).\]
Indeed, given $(\gamma,\boldsymbol\mu)\in\mathscr A(x,\bar\mu)$, we have that $\gamma\in S_F^{\boldsymbol\mu,\gamma}(\bar x)$,
and $\boldsymbol\mu=\{\mu_t\}_{t\in I}$ is a solution of the continuity equation $\partial_t\mu_t+\mathrm{div}(v_t\mu_t)=0$
satisfying $\mu_0=\bar \mu$ and $v_t(x)\in G(t,\gamma(t),x,\mu_t)$. 
By Theorem 8.3.1 in \cite{AGS}, there exists $\boldsymbol\eta\in\mathscr P(\mathbb R^d\times\Gamma_I)$ supported on pairs
$(y,\xi(\cdot))$ such that $\xi(0)=y$ and $\dot \xi(t)=v_t(\gamma(t))\in G(t,\gamma(t),\xi(t),\mu_t)$. 
This amounts to say that $\boldsymbol\eta$ is supported on $\mathrm{graph}({S_G}^{\boldsymbol{\mu},w})$,
thus $\boldsymbol\eta\in\boldsymbol{S_G}^{\boldsymbol{\mu},\gamma}(\bar\mu)$ and so $\boldsymbol\mu\in\boldsymbol{S_G}^{\boldsymbol{\mu},\gamma}(\bar\mu)$.
Since $\gamma\in{S_G}^{\boldsymbol{\mu},\gamma}(\bar x)$, we have that $(\gamma,\boldsymbol\mu)$ is a fixed point.
\par\medskip\par
Conversely, suppose that $(\gamma,\boldsymbol\mu)$ is a fixed point. Clearly we have that $\dot\gamma(t)\in F(t,\gamma(t),\mu_t)$
for a.e. $t\in I$. Furthermore, $\boldsymbol\mu=\{\mu_t\}_{t\in I}$ satisfies $\mu_t=e_t\sharp\boldsymbol\eta$
for a certain $\boldsymbol\eta$ supported on $(x,\xi)$ where $\xi\in S_G^{\boldsymbol\mu,\gamma}(x)$.
Defined 
\[v_t(y)=\int_{\{\gamma:\,\gamma(t)=y\}} \dot\gamma(t)\,d\eta_{t,y}(x,\gamma),\]
where $\boldsymbol\eta=\mu_t\otimes\eta_{t,y}$ is the disintegration of $\boldsymbol\eta$ w.r.t. $e_t$,
we have that $(t,y)\mapsto v_t(y)$ is a Borel selection of $(t,y)\mapsto G(t,\gamma(t),y,\mu_t)$ 
for a.e. $t\in I$ and $\mu_t$-a.e. $y\in\mathbb R^d$, due to the convexity of the images of $G$.
For every $\varphi\in C^1_c(]0,T[\times\mathbb R^d)$ it holds
\begin{align*}
\int_I\int_{\mathbb R^d}&[\partial_t\varphi(x,t)+\langle v_t(x),\nabla_x\varphi(x,t)\rangle]\,d\mu_t(x)\,dt=\\
=&\int_I\int_{\mathbb R^d}[\partial_t\varphi(x,t)+\langle \int_{\{\gamma:\,\gamma(t)=x\}} \dot\gamma(t)\,d\eta_{t,x}(z,\gamma),\nabla_x\varphi(x,t)\rangle]\,d(e_t\sharp\boldsymbol\eta)(x)\,dt\\
=&\int_I\int_{\mathbb R^d}\int_{\{\gamma:\,\gamma(t)=x\}}[\partial_t\varphi(\gamma(t),t)+\langle\dot\gamma(t),\nabla_x\varphi(\gamma(t),t)\rangle]\,d\eta_{t,x}(z,\gamma)\,d(e_t\sharp\boldsymbol\eta)(x)\,dt\\
=&\int_I\iint_{\mathbb R^d\times\Gamma_I}[\partial_t\varphi(\gamma(t),t)+\langle\dot\gamma(t),\nabla_x\varphi(\gamma(t),t)\rangle]\,d\boldsymbol\eta(z,\gamma)\,dt\\
=&\int_I\iint_{\mathbb R^d\times\Gamma_I}\dfrac{d}{dt}[\varphi(\gamma(t),t)]\,d\boldsymbol\eta(z,\gamma)\,dt\\
=&\iint_{\mathbb R^d\times\Gamma_I}[\varphi(\gamma(T),T)-\varphi(\gamma(0),0)]\,d\boldsymbol\eta(z,\gamma)=0,
\end{align*}
since $\varphi$ has compact support contained in $]0,T[\times\mathbb R^d$.
Thus $\partial_t\mu_t+\mathrm{div}(v_t\mu_t)=0$ in the sense of distributions, 
and therefore $(\gamma,\boldsymbol\mu)\in\mathscr A_I(\bar x,\bar \mu)$.
\end{proof}
\begin{proposition}\label{prop:filippov}
	Let $I=[0,T]$. Under the assumptions $(D_F)$ and $(D_G)$, given $(\bar x,\bar\mu)\in X\times\mathscr P_2(X)$, a Borel map $\bar v(\cdot)\in L^2_{\bar\mu}(X)$ 
	with $\bar v(x)\in G(0,x,\bar x,\bar\mu)$ for all $x\in X$, and $v_0\in \mathbb R^d$, there exists $(\gamma,\boldsymbol\mu)\in \mathscr A_I(\bar x,\bar \mu)$
	and $\boldsymbol\eta\in\mathscr P(\Gamma_I)$ and a right neighborhood $J$ of $0$ in $I$ with the following properties
	\begin{itemize}
		\item $\gamma(0)=\bar x$, $\gamma_{|J}\in C^1(\bar J)$, $\dot\gamma(0)=v_0$;
		\item $\boldsymbol\mu=\{\mu_t\}_{t\in I}$, $\mu_t=e_t\sharp\boldsymbol\eta$, $\boldsymbol\eta\in \boldsymbol{S_G}^{\boldsymbol\mu,\gamma}(\bar\mu)$;
		\item for $\boldsymbol\eta$-a.e. $\xi\in \Gamma_I$ we have that $\xi_{|J}\in C^1(\overline{J})$ and $\dot\xi(0)=\bar v(\xi(0))$.
	\end{itemize}
\end{proposition}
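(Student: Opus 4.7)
The strategy is to splice the Filippov projection estimate of Lemma \ref{lemma:absfil} onto the fixed-point construction of the preceding proposition, so as to pin down the initial velocities of the leader and of $\boldsymbol\eta$-a.e.\ follower. Throughout I implicitly use the natural compatibility condition $v_0\in F(0,\bar x,\bar\mu)$ (otherwise no admissible trajectory has $\dot\gamma(0)=v_0$), which together with the stated hypothesis $\bar v(x)\in G(0,x,\bar x,\bar\mu)$ makes the prescribed initial velocities feasible at $t=0$.

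Fix $\delta>0$ small, set $J:=[0,\delta]$, and take the smooth reference trajectories $\hat\gamma(t):=\bar x+tv_0$ and, for $\bar\mu$-a.e.\ $x\in X$, $\hat\xi_x(t):=x+t\bar v(x)$. Pushing $\bar\mu$ forward by the Borel map $x\mapsto(x,\hat\xi_x)$ yields $\hat{\boldsymbol\eta}\in\mathscr P(X\times\Gamma_J)$, and setting $\hat\mu_t:=(\mathrm{pr}_2\circ e_t)\sharp\hat{\boldsymbol\eta}$ gives a $W_2$-Lipschitz curve with $W_2(\hat\mu_t,\bar\mu)\le t\|\bar v\|_{L^2_{\bar\mu}}$. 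By $(D_F)$, $(D_G)$, and the compatibility at $t=0$, the infeasibilities $\mathrm{dist}(v_0,F(s,\hat\gamma(s),\hat\mu_s))$ and $\mathrm{dist}(\bar v(x),G(s,\hat\gamma(s),\hat\xi_x(s),\hat\mu_s))$ both tend to $0$ as $s\to 0^+$, uniformly in $x$.

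Next I initialize the $\tfrac12$-contraction $(w,\boldsymbol\theta)\mapsto S_F^{\boldsymbol\theta,w}(\bar x)\times\boldsymbol{\Upsilon_G}^{\boldsymbol\theta,w}(\bar\mu)$ at $(\hat\gamma,\hat{\boldsymbol\mu})$; by Nadler's theorem one obtains a fixed point $(\gamma,\boldsymbol\mu)\in\mathscr A_J(\bar x,\bar\mu)$, represented by some $\boldsymbol\eta\in\boldsymbol{S_G}^{\boldsymbol\mu,\gamma}(\bar\mu)$, whose distance from the reference (in the weighted norms of the previous proof) is at most twice the first Filippov correction, hence $o(t)$ as $t\to 0^+$. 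Prolongation to all of $I$ then follows by concatenating this local solution with any global solution starting from $(\gamma(\delta),\mu_\delta)$ via the previous proposition and the $\oplus$ operation, which is continuous because the two pieces agree at $t=\delta$.

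The main obstacle is the last requirement of \emph{simultaneous} $C^1$ regularity with the prescribed initial velocities, both for $\gamma$ and for $\boldsymbol\eta$-a.e.\ $\xi$. For $\gamma$, writing $\gamma(t)=\bar x+\int_0^t f(s)\,ds$ with $f(s)\in F(s,\gamma(s),\mu_s)$, the Lipschitz continuity of $F$ and the freedom in selecting $f$ allow one to choose $f$ continuous near $0$ with $f(0)=v_0$, so that $\gamma\in C^1(\bar J)$ and $\dot\gamma(0)=v_0$. For the followers the genuine issue is measurability in $\xi$: one applies Lemma \ref{lemma:projection} to the Borel multifunction
\[
x\mapsto\bigl\{\eta\in S_G^{\boldsymbol\mu,\gamma}(x):\eta\in C^1(\bar J),\ \dot\eta(0)=\bar v(x)\bigr\},
\]
which is non-empty for $\bar\mu$-a.e.\ $x$ by the same local Filippov construction applied pointwise to $\hat\xi_x$, to obtain a Borel selection $x\mapsto\xi_x$. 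Pushing $\bar\mu$ forward by $x\mapsto(x,\xi_x)$ produces the required $\boldsymbol\eta$ concentrated on curves with the prescribed initial derivative. The delicate point is verifying that this refined $\boldsymbol\eta$ is still compatible with the fixed-point relation $\boldsymbol\eta\in\boldsymbol{S_G}^{\boldsymbol\mu,\gamma}(\bar\mu)$, which uses the fact that the $C^1$-at-$0$ constraint only reshuffles mass inside each fiber $S_G^{\boldsymbol\mu,\gamma}(x)$ and therefore does not alter $\boldsymbol\mu$ or the Lipschitz-contraction estimates.
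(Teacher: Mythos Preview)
Your argument has a genuine gap in the last paragraph, precisely at the step you yourself flag as ``delicate.'' Once $(\gamma,\boldsymbol\mu)$ is obtained as a fixed point of the set-valued contraction, the curve $\boldsymbol\mu$ is determined by the particular representative $\boldsymbol\eta$ through $\mu_t=e_t\sharp\boldsymbol\eta$. If you replace $\boldsymbol\eta$ by another measure $\boldsymbol\eta'$ supported on the same fibers $S_G^{\boldsymbol\mu,\gamma}(x)$ (now selecting the $C^1$ curves with $\dot\xi(0)=\bar v(x)$), the time-marginals change: in general $e_t\sharp\boldsymbol\eta'\neq e_t\sharp\boldsymbol\eta$, because distinct solutions of the differential inclusion issued from the same $x$ do not coincide at later times. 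Hence $\boldsymbol\mu':=\{e_t\sharp\boldsymbol\eta'\}$ is a new curve, and $(\gamma,\boldsymbol\mu')$ need not be a fixed point: you would need $\boldsymbol\eta'\in\boldsymbol{S_G}^{\boldsymbol\mu',\gamma}(\bar\mu)$, not $\boldsymbol{S_G}^{\boldsymbol\mu,\gamma}(\bar\mu)$. The same self-reference problem affects your modification of $\gamma$: choosing a new selector $f$ produces a different leader trajectory $\gamma'$, and one then needs $\gamma'\in S_F^{\boldsymbol\mu,\gamma'}(\bar x)$, which is not what the original fixed point gives. In short, the ``reshuffling inside fibers'' does alter $\boldsymbol\mu$, and the coupled fixed-point relation unravels.

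The paper sidesteps this entirely by a different mechanism. It first parameterizes $F$ and $G$ by continuous maps $f(t,x,\mu,u)$ and $g(t,x,y,\mu,u)$ over a control ball $\overline B$ (Aubin--Frankowska, Theorem 9.7.2), then uses Filippov's implicit function theorem to pick a \emph{constant} control $u_0$ for the leader with $f(0,\bar x,\bar\mu,u_0)=v_0$ and a Borel map $\bar u(x)$ with $g(0,x,\bar x,\bar\mu,\bar u(x))=\bar v(x)$. With these controls frozen, the associated ODEs are single-valued with continuous right-hand sides, so their solutions are automatically $C^1$ with the prescribed initial velocities. The map $(w,\boldsymbol\xi)\mapsto(\gamma_{u_0}^{\,\cdot},\boldsymbol\eta^{\bar v,\cdot})$ is then a \emph{single-valued} contraction on a suitable complete set $Q(\bar x,\bar\mu)$, and its unique fixed point carries the $C^1$ structure by construction---no a posteriori reselection is needed.
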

\begin{proof}
	The proof is postponed to Appendix for convenience of the reader.
\end{proof}

\section{HJB equation}\label{sec:HJB}

Assume that $F$ and $G$ satisfy the assumptions in the Section \ref{sec:model}. 
We consider the following differential problem 
\begin{align}\begin{cases}\label{eq:system1}
		\dot \xi(t)\in F(t,\xi(t),\mu_t),&\textrm{ for a.e. $t\in I$},\\
		\partial_t \mu_t+\mathrm{div}(v_t\mu_t)=0,\\
		\xi(t_0)=\xi_0, \ \ \ \mu_{t_0}=\mu_0
\end{cases}\end{align}
where the coupling between the two equations is given by assuming that the vector field $v=v_t(y)$ 
can be any Borel selection of the set-valued map $(t,y)\mapsto G(t,\xi(t),y,\mu_t)$. We denote by $S(t_0,\xi_0,\mu_0)$ the set of solution to \eqref{eq:system1}. 

\begin{definition}
Let $\mathscr  L: \mathbb T^d \times  \mathscr P_2(\mathbb T^d) \to [0,+\infty[$ and $\mathscr  G : \mathbb T^d \times  \mathscr P_2 (\mathbb T^d) \to [0, +\infty[$ be Borel measurable. Define $V:[0, T ] \times  \mathbb T^d \times  \mathscr P_2 (\mathbb T^d) \to\mathbb R$ to be the value function of the minimization problem
\begin{equation}\label{eq:vf}
V(t_0,\xi_0,\mu_0):= \inf_{(\xi(\cdot),\mu_\cdot)\in \mathscr A_{[t_0,T]}(\xi_0,\mu_0)} J(t_0,\xi_0,\mu_0,\xi(\cdot),\mu_\cdot),
\end{equation}
where 
\[J(t_0,\xi_0,\mu_0,\xi(\cdot),\mu_\cdot)= \int_{t_0}^T \mathscr  L(\xi(s),\mu_s)\,ds +\mathscr  G(\xi(T),\mu_T).\]
\end{definition}
On $\mathscr L$, $\mathscr G$ we make the following assumption
\begin{equation}\tag{$A$}
\textrm{$\mathscr L$, $\mathscr G$ are bounded and Lipschitz continuous.}
\end{equation}

It is well known that under assumption $(A)$ the map $V$ is bounded and Lipschitz continuous, 
furthermore the infimum appearing in \eqref{eq:vf} is attained due to the compactness of $\mathscr A_{[t_0,T]}(\xi_0,\mu_0)$.
The Dynamical Programming Principle holds, as shown in the following Proposition.
\begin{proposition}[Dynamical Programming Principle]\label{prop:DPP}
The value function $V$ defined in \eqref{eq:vf} satisfies the Dynamical Programming Principle, i.e., 
\begin{equation}
V(t_0,\xi_0,\mu_0)= \inf \left\{ \int_{t_0}^\tau \mathscr L(\xi(s),\mu_s)\,ds + V(\tau,\xi(\tau),\mu_\tau), \ (\xi,\mu) \in \mathscr A_{[t_0,T]}(\xi_0,\mu_0)\right\},
\end{equation}
for any $\tau \in[t_0,T]$.
\end{proposition}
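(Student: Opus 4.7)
The plan is to establish the Dynamical Programming Principle by proving the two standard inequalities ($\leq$ and $\geq$). The classical argument from deterministic control theory adapts to this setting once concatenation and restriction operations on elements of $\mathscr A_I$ are justified. Since $\mathscr L, \mathscr G$ are bounded and Lipschitz and $\mathscr A_{[t_0,T]}(\xi_0,\mu_0)$ is compact, the infimum in \eqref{eq:vf} is attained for every initial datum, which is already convenient.

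For the inequality $V(t_0,\xi_0,\mu_0)\leq \inf\{\ldots\}$, I would fix an arbitrary $(\xi,\mu_\cdot)\in\mathscr A_{[t_0,T]}(\xi_0,\mu_0)$ and an arbitrary $\varepsilon>0$. First I would restrict $(\xi,\mu_\cdot)$ to $[t_0,\tau]$, obtaining an element of $\mathscr A_{[t_0,\tau]}(\xi_0,\mu_0)$ (the differential inclusion and continuity equation are preserved by restriction). Next, by definition of $V(\tau,\xi(\tau),\mu_\tau)$, pick $(\tilde\xi,\tilde\mu_\cdot)\in \mathscr A_{[\tau,T]}(\xi(\tau),\mu_\tau)$ with associated cost at most $V(\tau,\xi(\tau),\mu_\tau)+\varepsilon$. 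I would then concatenate: on the trajectory side use $\xi_{|[t_0,\tau]}\oplus \tilde\xi$, which belongs to $AC([t_0,T];\mathbb T^d)$ because the two pieces agree at $\tau$; on the measure side use the superposition principle on each piece to obtain $\boldsymbol\eta_1\in\mathscr P(\Gamma_{[t_0,\tau]})$ and $\boldsymbol\eta_2\in\mathscr P(\Gamma_{[\tau,T]})$, and glue them via the disintegrations $\boldsymbol\eta_1=\mu_\tau\otimes\eta_1^x$, $\boldsymbol\eta_2=\mu_\tau\otimes\eta_2^x$ with respect to $\hat e_\tau$, obtaining $\boldsymbol\eta\in\mathscr P(\Gamma_{[t_0,T]})$ supported on curves $\gamma_1\oplus\gamma_2$. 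Pushing forward by $\hat e_t$ gives a continuous curve $t\mapsto\hat\mu_t$ solving the continuity equation with a glued Borel selection of $G(t,\hat\xi(t),\cdot,\hat\mu_t)$. The concatenated pair lies in $\mathscr A_{[t_0,T]}(\xi_0,\mu_0)$, has cost $\int_{t_0}^\tau \mathscr L(\xi,\mu_s)\,ds + J(\tau,\xi(\tau),\mu_\tau,\tilde\xi,\tilde\mu_\cdot)\leq \int_{t_0}^\tau \mathscr L(\xi,\mu_s)\,ds+V(\tau,\xi(\tau),\mu_\tau)+\varepsilon$, and taking the infimum over $(\xi,\mu_\cdot)$ and letting $\varepsilon\to 0^+$ closes the inequality.

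For the reverse inequality, using compactness of $\mathscr A_{[t_0,T]}(\xi_0,\mu_0)$ and continuity of $J$ I would pick an optimal $(\xi^*,\mu^*_\cdot)$ with $J(\cdot)=V(t_0,\xi_0,\mu_0)$. The restriction of $(\xi^*,\mu^*_\cdot)$ to $[\tau,T]$ lies in $\mathscr A_{[\tau,T]}(\xi^*(\tau),\mu^*_\tau)$, so by definition of $V(\tau,\cdot)$ the tail cost bounds below $V(\tau,\xi^*(\tau),\mu^*_\tau)$. Splitting the Bolza cost at $\tau$ yields
\[
V(t_0,\xi_0,\mu_0)=\int_{t_0}^\tau \mathscr L(\xi^*(s),\mu^*_s)\,ds + J(\tau,\xi^*(\tau),\mu^*_\tau,\xi^*_{|[\tau,T]},\mu^*_{|[\tau,T]})\geq \int_{t_0}^\tau \mathscr L(\xi^*(s),\mu^*_s)\,ds + V(\tau,\xi^*(\tau),\mu^*_\tau),
\]
and the right-hand side is at least the infimum on the right of the DPP. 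The main technical obstacle is the gluing step in the first inequality: verifying that the measure-valued part of the concatenated trajectory genuinely solves the continuity equation with a Borel selection of $G(t,\hat\xi(t),\cdot,\hat\mu_t)$ requires combining the Lagrangian representations via disintegration with respect to $\hat e_\tau$, together with Borel measurability of the induced selection at time $\tau$; once this gluing on curve spaces is carefully written, everything else in the proof is routine.
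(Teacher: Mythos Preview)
Your proposal is correct and follows essentially the same two-inequality argument as the paper: restrict an optimal trajectory to get $\geq$, and concatenate an arbitrary trajectory on $[t_0,\tau]$ with a (near-)optimal tail on $[\tau,T]$ to get $\leq$. The only cosmetic difference is that the paper uses an exact minimizer for the tail (available by compactness) rather than an $\varepsilon$-optimal one, and it states the concatenation $(\tilde\xi,\tilde\mu)\in\mathscr A_{[t_0,T]}(\xi_0,\mu_0)$ directly without spelling out the disintegration-based gluing you describe.
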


\begin{proof}
The proof is standard and it is postponed to the Appendix for the convenience of the reader.
\end{proof}

\begin{proposition}
The value function $V$ defined by \eqref{eq:vf} is a subsolution of \eqref{eq:HJB} with Hamiltonian function $H$ defined by
\begin{multline}\label{eq:definitionham}
H(t,\xi,\mu,p_x,p_\mu(\cdot)):=\\\inf\left\{\mathscr  L(\xi,\mu)+\langle v,p_x\rangle +\langle p_\mu,w\rangle_{L^2_\mu}  : v\in F(t,\xi,\mu) \textrm{ and $\mu$-a.e. it holds}\ w(\cdot)\in G(t,\xi,\cdot,\mu)\right\}.
\end{multline}
\end{proposition}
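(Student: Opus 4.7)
The plan is to fix an arbitrary $(t_0,\xi_0,\mu_0)\in [0,T[\times\mathbb T^d\times\mathscr P_2(\mathbb T^d)$ and $(p_t,p_x,p_\mu)\in\partial^+V(t_0,\xi_0,\mu_0)$ and show $p_t+H(t_0,\xi_0,\mu_0,p_x,p_\mu)\ge 0$. The standard scheme I will follow is: freeze an admissible pair of initial controls, apply the Dynamic Programming Principle along the resulting trajectory, feed the DPP increment into the defining inequality of $\partial^+V$ evaluated on the Eulerian variation generated by that trajectory, and take the infimum over the controls.

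First pick $v\in F(t_0,\xi_0,\mu_0)$ and a Borel selection $\bar v:\mathbb T^d\to\mathbb T^d$ with $\bar v(x)\in G(t_0,\xi_0,x,\mu_0)$ for all $x$; since $G$ has compact images $\bar v$ is bounded, hence in $L^2_{\mu_0}$. Proposition \ref{prop:filippov}, applied on $I=[t_0,T]$ with $\bar x=\xi_0$, $\bar\mu=\mu_0$, produces $(\gamma,\boldsymbol\mu)\in\mathscr A_I(\xi_0,\mu_0)$ and a representative $\boldsymbol\eta\in\mathscr P(\Gamma_I)$ of $\boldsymbol\mu$ such that $\gamma$ is $C^1$ on a right-neighborhood $J$ of $t_0$ with $\dot\gamma(t_0)=v$, and $\boldsymbol\eta$-a.e.\ curve $\zeta$ is $C^1$ on $J$ with $\dot\zeta(t_0)=\bar v(\zeta(t_0))$. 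Set $\pi_h:=(\hat e_{t_0},\hat e_{t_0+h})\sharp\boldsymbol\eta$ for $h\in[0,T-t_0]$; this is an Eulerian variation from $\mu_0$, hence an admissible $2$-variation.

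The DPP (Proposition \ref{prop:DPP}) yields
\[V(t_0+h,\gamma(t_0+h),\mu_{t_0+h})-V(t_0,\xi_0,\mu_0)\ge -\int_{t_0}^{t_0+h}\mathscr L(\gamma(s),\mu_s)\,ds,\]
and with $y_h:=\gamma(t_0+h)-\xi_0=hv+o(h)$ one has $h+|y_h|=O(h)$. Testing the superdifferential inequality along $\pi_h$ with the increment $(h,y_h)$ and combining with the above produces
\[-\int_{t_0}^{t_0+h}\mathscr L(\gamma(s),\mu_s)\,ds\le p_t h+\langle p_x,y_h\rangle+\iint_{\mathbb T^d\times\mathbb T^d}\langle p_\mu(x),\xi-x\rangle\,d\pi_h(x,\xi)+o(h).\]
Dividing by $h$ and sending $h\to 0^+$, the $\mathscr L$-average converges to $\mathscr L(\xi_0,\mu_0)$ by continuity and $\langle p_x,y_h\rangle/h\to\langle p_x,v\rangle$. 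For the nonlocal term I rewrite
\[\frac1h\iint_{\mathbb T^d\times\mathbb T^d}\langle p_\mu(x),\xi-x\rangle\,d\pi_h(x,\xi)=\int_{\Gamma_I}\Big\langle p_\mu(\zeta(t_0)),\tfrac{\zeta(t_0+h)-\zeta(t_0)}{h}\Big\rangle\,d\boldsymbol\eta(\zeta);\]
for $\boldsymbol\eta$-a.e.\ $\zeta$ the inner difference quotient converges to $\bar v(\zeta(t_0))$ thanks to the $C^1$-at-$t_0$ property from Proposition \ref{prop:filippov}, and is dominated by a uniform bound on $G$ times $|p_\mu(\zeta(t_0))|\in L^1_{\boldsymbol\eta}$, so dominated convergence gives the limit $\int_{\mathbb T^d}\langle p_\mu(x),\bar v(x)\rangle\,d\mu_0(x)=\langle p_\mu,\bar v\rangle_{L^2_{\mu_0}}$.

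Combining the three limits produces $-\mathscr L(\xi_0,\mu_0)\le p_t+\langle p_x,v\rangle+\langle p_\mu,\bar v\rangle_{L^2_{\mu_0}}$, and taking the infimum over $v\in F(t_0,\xi_0,\mu_0)$ and over Borel selections $\bar v$ of $G(t_0,\xi_0,\cdot,\mu_0)$ reproduces $H(t_0,\xi_0,\mu_0,p_x,p_\mu)$, giving $p_t+H\ge 0$. The main obstacle is the identification of the $h\to 0^+$ limit of the nonlocal integral along $\pi_h$: Corollary \ref{cor:diffcase}(2) does not apply since $\bar v$ is only Borel, and it is precisely the $C^1$-at-$t_0$ regularity of $\boldsymbol\eta$-a.e.\ curve granted by Proposition \ref{prop:filippov} that saves the argument.
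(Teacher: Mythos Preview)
Your proof is correct and follows essentially the same approach as the paper: both invoke Proposition~\ref{prop:filippov} to produce an admissible trajectory with prescribed initial velocities, form the Eulerian variation $\pi_h=(\hat e_{t_0},\hat e_{t_0+h})\sharp\boldsymbol\eta$, combine the DPP inequality with the superdifferential inequality, and pass to the limit using the $C^1$-at-$t_0$ property of $\boldsymbol\eta$-a.e.\ curve to identify the nonlocal term. The only cosmetic difference is that the paper selects the argmin pair $(v,w)$ for the Hamiltonian at the outset, whereas you work with an arbitrary admissible $(v,\bar v)$ and take the infimum at the end; the two are obviously equivalent, and your explicit justification of the dominated-convergence step (with the remark on why Corollary~\ref{cor:diffcase}(2) is insufficient) is in fact more detailed than the paper's.
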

\begin{proof}
Take $0\leq t_0 <t_0+h<T$ and fix $\xi_0,\mu_0 \in \mathbb{T}^d \times \mathscr{P}_2(\mathbb{T}^d)$, we consider $(p_{t_0},p_{x_0},p_{\mu_0})\in\partial^+ V(t_0,\xi_0,\mu_0)$. Let 
\begin{equation*}
(v,w(\cdot))\in \mbox{argmin} \left\{\langle v,p_x\rangle +\langle p_\mu,w\rangle_{L^2_\mu}  : v\in F(t,\xi,\mu) \ \mbox{and}\ w\in G(t,\xi,\cdot,\mu) \mu-\mbox{a.e.}\right\}.
\end{equation*}
By the corollary of Filippov's Theorem \ref{prop:filippov}, there exists $(\xi(\cdot),\mu_\cdot)\in \mathscr A_{[t_0,T]}(\xi_0,\mu_0)$ and $\boldsymbol\eta\in \mathscr P(\mathbb T^d\times \Gamma_I)$
such that $\mu_t=e_t\sharp\boldsymbol\eta$ and 
\begin{align}
&\dot{\xi}(0)=v\\
& \int_{\mathbb{T}^d} \langle p_{\mu_0}(x), w(t)\rangle d\,\mu_0(x) = \lim_{h\rightarrow 0^+}\int_{\mathbb{T}^d\times \Gamma_{[t_0,T]}} \langle p_{\mu_0}(x), \frac{\gamma(t+h)-\gamma(t)}{h}\rangle \ , d\eta(x,\gamma)\label{eq:initvel}
\end{align}
Take $\pi_h:=(e_0,e_h)\sharp\boldsymbol\eta$. We notice that $\{\pi_h\}_{h\in [t_0,T]}$ is a family of admissible variations from $\mu_0$ by assumption $(A)$. 
By Proposition \ref{prop:DPP}, we have
\begin{align*}
0&\leq V(t_0+h,\xi(t_0+h),\mu_{t_0+h})-V(t_0,\xi_0,\mu_0)+ \int_{t_0}^{t_0+h}\mathscr L(\xi(s),\mu_s)\,ds \\
&\leq p_{t_0}h+ h \langle p_{x_0},v\rangle +\iint_{\mathbb T^d\times\mathbb T^d} \langle p_{\mu_0}(x),\zeta-x\rangle \,d\pi_h(x,\zeta)+ \int_{t_0}^{t_0+h} \mathscr  L(\xi(s),\mu_s)\,ds\\
&\leq h\cdot \left[p_{t_0}+\langle p_{x_0},v\rangle+ \int_{\mathbb T^d} \langle p_{\mu_0}(x), \dfrac{\gamma(t+h)-\gamma(t)}{h}\rangle \, d\boldsymbol\eta(x,\gamma)+ \dfrac{1}{h}\int_{t_0}^{t_0+h}\mathscr  L(\xi(s),\mu_s)\,ds\right]
\end{align*}
Dividing by $h$ the above inequality, recalling  \eqref{eq:initvel}, the continuity of $\mathcal L$, and the definition of Hamiltonian function, by passing to the liminf we get
\[0\leq p_{t_0}+\langle p_{x_0},v\rangle+ \int_{\mathbb T^d}\langle p_{\mu_0}(x),w(x)\rangle\,d\mu_0+\mathscr L(\xi_0,\mu_0)= p_{t_0}+ H(t,x_0,\xi_0,p_{t_0},p_{x_0},p_{\mu_0}),\]
as desired.
\end{proof}

		\begin{proposition}
		The value function $V$ defined by \eqref{eq:vf} is a supersolution of HJB \eqref{eq:definitionham}
	in the sense of Definition \ref{def:viscosol}.
	\end{proposition}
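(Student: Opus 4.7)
The plan is to combine the Dynamical Programming Principle of Proposition~\ref{prop:DPP} with the existence of an optimal trajectory (ensured by the compactness of $\mathscr A_{[t_0,T]}(\xi_0,\mu_0)$ and the continuity of the Bolza cost) and to read off the supersolution inequality from the subdifferential definition, using an Eulerian variation built from the optimal flow as the admissible family to test against.

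First I would fix $(t_0,\xi_0,\mu_0)\in[0,T[\times\mathbb T^d\times\mathscr P_2(\mathbb T^d)$ and $(p_t,p_x,p_\mu)\in\partial^-V(t_0,\xi_0,\mu_0)$, then pick an optimal $(\xi^*,\boldsymbol\mu^*)\in\mathscr A_{[t_0,T]}(\xi_0,\mu_0)$ together with a Borel driving field $v^*_s(y)\in G(s,\xi^*(s),y,\mu^*_s)$ for $\mu^*_s$-a.e.\ $y$ and a superposition representative $\boldsymbol\eta^*\in\mathscr P(\Gamma_{[t_0,T]})$ of $\boldsymbol\mu^*$. The family $\pi_h:=(\hat e_{t_0},\hat e_{t_0+h})\sharp\boldsymbol\eta^*$ is then an Eulerian, and therefore admissible, variation from $\mu_0$. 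By Proposition~\ref{prop:DPP},
\[V(t_0,\xi_0,\mu_0)=\int_{t_0}^{t_0+h}\mathscr L(\xi^*(s),\mu^*_s)\,ds+V(t_0+h,\xi^*(t_0+h),\mu^*_{t_0+h})\]
for every small $h>0$. Plugging the perturbation $y_h:=\xi^*(t_0+h)-\xi_0$ (of order $h$, as $\dot\xi^*\in F$ is uniformly bounded) and the variation $\pi_h$ into the $\partial^-$-inequality, substituting the DPP identity, and dividing by $h$ gives
\[p_t+\frac{1}{h}\int_{t_0}^{t_0+h}\mathscr L\,ds+\left\langle p_x,\frac{1}{h}\int_{t_0}^{t_0+h}\dot\xi^*(s)\,ds\right\rangle+\frac{1}{h}\iint_{\mathbb T^d\times\mathbb T^d}\langle p_\mu(x),\zeta-x\rangle\,d\pi_h(x,\zeta)\le o(1).\]

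Next I would pass to the limit along a subsequence $h_n\to 0^+$. Continuity of $\mathscr L$ and of $s\mapsto(\xi^*(s),\mu^*_s)$ handles the first average. Lipschitz continuity of $F$ from $(D_F)$ together with the convex-compactness of $F(t_0,\xi_0,\mu_0)$ confines the averages of $\dot\xi^*$ to $F(t_0,\xi_0,\mu_0)+\omega(h)B$, where $\omega$ is a modulus vanishing at $0$, and produces (along a further subsequence) a limit $v^*\in F(t_0,\xi_0,\mu_0)$. For the measure term I would disintegrate $\boldsymbol\eta^*=\mu_0\otimes\boldsymbol\eta^*_y$ with respect to $\hat e_{t_0}$ and rewrite the last integral as $\int_{\mathbb T^d}\langle p_\mu(y),w^h(y)\rangle\,d\mu_0(y)$, where
\[w^h(y):=\int_{\Gamma_{[t_0,T]}}\frac{1}{h}\int_{t_0}^{t_0+h}v^*_s(\gamma(s))\,ds\,d\boldsymbol\eta^*_y(\gamma).\]
The sequence $(w^h)$ is uniformly bounded, and the Lipschitz estimate $(D_G)$ combined with the convexity of the images of $G$ gives $w^h(y)\in G(t_0,\xi_0,y,\mu_0)+\omega(h)B$ for $\mu_0$-a.e.\ $y$. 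Extracting a weak limit $w^*$ in $L^2_{\mu_0}$ and invoking a Mazur-type closedness argument for convex-valued multifunctions would certify $w^*(y)\in G(t_0,\xi_0,y,\mu_0)$ $\mu_0$-a.e.\ and $\int\langle p_\mu,w^{h_n}\rangle\,d\mu_0\to\int\langle p_\mu,w^*\rangle\,d\mu_0$.

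Passing to the limit then delivers
\[p_t+\mathscr L(\xi_0,\mu_0)+\langle p_x,v^*\rangle+\int_{\mathbb T^d}\langle p_\mu(y),w^*(y)\rangle\,d\mu_0(y)\le 0,\]
and since $(v^*,w^*)$ is an admissible competitor in the infimum defining $H$ in \eqref{eq:definitionham}, we conclude $p_t+H(t_0,\xi_0,\mu_0,p_x,p_\mu)\le 0$. The hard step is the measure-side limit: one must simultaneously pass to the limit in a weak-$L^2$ sense and yet certify that $w^*$ remains a pointwise $\mu_0$-a.e.\ selection of the convex-compact set $G(t_0,\xi_0,\cdot,\mu_0)$. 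This is where the combination of $(D_G)$, convexity of the images of $G$, and the closedness of convex-valued multimaps under weak convergence plays a decisive role.
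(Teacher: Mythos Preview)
Your argument is correct and follows essentially the same route as the paper: test the subdifferential inequality against the admissible (Eulerian) variation generated by an optimal trajectory via the Dynamical Programming Principle, divide by $h$, and extract limits $v^*\in F(t_0,\xi_0,\mu_0)$ and $w^*(\cdot)\in G(t_0,\xi_0,\cdot,\mu_0)$ to land inside the infimum defining $H$. The only cosmetic differences are that you work with a single exact optimizer (which exists by compactness) rather than $\varepsilon$-optimal pairs, and you make explicit the weak-$L^2$ compactness and Mazur/closedness argument for the measure term that the paper leaves implicit.
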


\begin{proof} Take $0\leq t_0 <t_0+h<T$ and fix $\xi_0,\mu_0 \in \mathbb{T}^d \times \mathscr{P}_2(\mathbb{T}^d)$, we consider $(p_{t_0},p_{x_0},p_{\mu_0})\in\partial^- V(t_0,\xi_0,\mu_0)$
For each $\varepsilon>0$, there exists an optimal pair 
$(\xi^\varepsilon,\mu^\varepsilon)$ such that
\begin{equation}\label{eq:DPP-eps}
	V(t_0+h,\xi^\varepsilon(t_0+h),\mu^\varepsilon_{t_0+h}) - V(t_0,\xi_0,\mu_0)
	\;\le\; - \int_{t_0}^{t_0+h} L(\xi^\varepsilon(s),\mu^\varepsilon_s)\,ds + \varepsilon h.
\end{equation}
Since $(p_{t_0},p_{\xi_0},p_{\mu_0}) \in \partial^- V(t_0,\xi_0,\mu_0)$, for the coupling 
$\pi_h^\varepsilon$ associated with $(\xi^\varepsilon,\mu^\varepsilon)$ we have
\begin{align}\label{eq:subjet}
	&V(t_0+h,\xi^\varepsilon(t_0+h),\mu^\varepsilon_{t_0+h}) - V(t_0,\xi_0,\mu_0)
	\\
	&\geq p_{t_0}h + \langle p_{\xi_0}, \xi^\varepsilon(t_0+h)-\xi_0\rangle
	+ \int_{\mathbb T^d\times\mathbb T^d}\!\! \langle p_{\mu_0}(x),\zeta-x\rangle\,d\pi_h^\varepsilon(x,\zeta) - o(h).
\end{align}
Comparing \eqref{eq:DPP-eps} and \eqref{eq:subjet} yields
\begin{align*}
	& p_{t_0}h + \langle p_{\xi_0}, \xi^\varepsilon(t_0+h)-\xi_0\rangle
	+ \int_{\mathbb T^d\times\mathbb T^d}\!\! \langle p_{\mu_0}(x),\zeta-x\rangle\,d\pi_h^\varepsilon(x,\zeta) \\
	& \qquad + \int_{t_0}^{t_0+h} L(\xi^\varepsilon(s),\mu^\varepsilon_s)\,ds
	\;\le\; \varepsilon h + o(h).
\end{align*}
Divide by $h$ and send $h\downarrow 0$. Since $(\xi^\varepsilon,\mu^\varepsilon)$ is an optimal pair
, we can select $v\in F(t_0,\xi_0,\mu_0)$ and 
$w(\cdot)\in G(t_0,\xi_0,\cdot,\mu_0)$ such that
\[
\frac{\xi^\varepsilon(t_0+h)-\xi_0}{h}\to v, 
\qquad 
\frac{1}{h}\!\int\!\langle p_{\mu_0}(x),\zeta-x\rangle\,d\pi_h^\varepsilon
\to \int_{\mathbb T^d}\!\langle p_{\mu_0}(x),w(x)\rangle\,d\mu_0(x),
\]
and
\[
\frac{1}{h}\int_{t_0}^{t_0+h} L(\xi^\varepsilon(s),\mu^\varepsilon_s)\,ds \to L(\xi_0,\mu_0).
\]
Hence
\[
p_{t_0} + \langle p_{\xi_0},v\rangle
+ \int_{\mathbb T^d}\!\langle p_{\mu_0}(x),w(x)\rangle\,d\mu_0(x) 
+ L(\xi_0,\mu_0) \;\le\; \varepsilon.
\]
Since $\varepsilon>0$ is arbitrary,by passing to the liminf  we conclude
\[
p_{t_0} + H\bigl(t_0,\xi_0,\mu_0;p_{\xi_0},p_{\mu_0}\bigr) \;\le\; 0.
\]

\end{proof}

\begin{definition}
For every measure $\mu$ and $\nu$ and $\pi\in \Pi(\mu,\nu)$ we define $p_{\mu, \nu}^\pi \in {L}^2_\mu(\mathbb{T}^d)$ such that
\begin{equation}
\int_{\mathbb{T}^d} \langle \phi(x),x-y\rangle \,d\pi(x,y)=\int_{\mathbb{T}^d} \langle \phi(x), p^\pi (x)\rangle \,d\mu(x)
\end{equation}
for all $\phi \in {L}^2_\mu(\mathbb{T}^d)$.
\end{definition}
By Lemma 4 \cite{JMQ}, we recall that 
\begin{equation*}
p^\pi (x)=x-\int_{\mathbb{T}^d} y\,d\pi_x(y),
\end{equation*}
where $\pi$ is disintegrated as $\pi=(pr_1\sharp \pi)\otimes \pi_x$.
\begin{remark}\label{rem:invplan}
If $\pi\in \Pi_0(\mu,\nu)$ then $\pi^{-1}\in \Pi_0(\nu,\mu)$. Indeed, for all $\phi\in C^0(\mathbb{T}^d)$ it holds: 
\begin{align*}
\int_{\mathbb{T}^d} \langle \phi(y), p^{\pi^{-1}}(y)\rangle \,d\nu(y)&=  \int_{\mathbb{T}^d} \langle \phi(y),y-\int_{\mathbb{T}^d}x\,d\pi^{-1}_y(x)\rangle \,d\nu(y)=\iint_{\mathbb{T}^d\times \mathbb{T}^d} \langle \phi(y),y-x\rangle \,d\pi^{-1}(x,y)\\
&=\iint_{\mathbb{T}^d\times \mathbb{T}^d}\langle \phi(x),x-y\rangle \,d\pi(x,y)=\int_{\mathbb{T}^d}\langle \phi(x),p^{\pi}(x)\rangle \,d \mu(x)
\end{align*}
\end{remark}


\begin{proposition}\label{prop:estimateH}
Let $X:=[0,T]\times \mathbb{T}^d\times \mathscr{P}(\mathbb{T}^d)$.
For every $\mu$, $\nu$, for every $\pi \in \Pi_0(\mu,\nu)$ and for every $\lambda >0$ it holds the following
\begin{align*}
H(t,x,\mu,p,\lambda p^\pi)-H(s,y,\nu,q,\lambda p^{\pi^{-1}})\leq C \Delta +k\Delta\psi|p|+ |\bar\eta||p-q| +\lambda \Delta W_2(\mu,\nu),
\end{align*}
where $\Delta= d_X((t,x,\mu),(s,y,\nu))$.
\end{proposition}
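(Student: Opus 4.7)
The plan is to compare carefully chosen controls realizing the two Hamiltonians. Since $F$ and $G$ have compact convex values, $\mathscr L$ is continuous, and $\mathbb T^d$ and $\mathscr P_2(\mathbb T^d)$ are compact, the infimum defining $H(s,y,\nu,q,\lambda p^{\pi^{-1}})$ is attained at some $v^*\in F(s,y,\nu)$ together with some Borel selection $w^*(\cdot)$ of $y'\mapsto G(s,y,y',\nu)$; in particular $v^*$ and $w^*$ are uniformly bounded by the uniform bound on $F$ and $G$ (which follows from the Lipschitz assumption $(D_F)$, $(D_G)$ and compactness of the base).

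Next I would construct comparable admissible controls for the first Hamiltonian. By hypothesis $(D_F)$, $d_H(F(t,x,\mu),F(s,y,\nu))\le k\Delta$, so there exists $v\in F(t,x,\mu)$ with $|v-v^*|\le k\Delta$. For the measure-valued control, disintegrate $\pi=\mu\otimes\pi_{x'}$, define the Borel map $\tilde w(x'):=\int_{\mathbb T^d}w^*(y')\,d\pi_{x'}(y')$, and let $w(x')$ be the metric projection of $\tilde w(x')$ onto the compact convex set $G(t,x,x',\mu)$; this is a Borel selection by Lemma~\ref{lemma:projection}. Applying $(D_G)$ pointwise in $y'$ together with convexity of $z\mapsto\mathrm{dist}(z,G(t,x,x',\mu))$ yields
\[
|w(x')-\tilde w(x')|\le k\int_{\mathbb T^d}\bigl(\Delta+|x'-y'|\bigr)\,d\pi_{x'}(y').
\]

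Plugging $(v,w(\cdot))$ into the infimum defining $H(t,x,\mu,p,\lambda p^\pi)$ and subtracting the attained expression for $H(s,y,\nu,q,\lambda p^{\pi^{-1}})$ produces the following bound:
\begin{align*}
H(t,x,\mu,p,\lambda p^\pi)-H(s,y,\nu,q,\lambda p^{\pi^{-1}})
&\le\bigl[\mathscr L(x,\mu)-\mathscr L(y,\nu)\bigr]+\langle v-v^*,p\rangle+\langle v^*,p-q\rangle\\
&\quad+\lambda\iint\langle w(x'),x'-y'\rangle\,d\pi-\lambda\iint\langle w^*(y'),y'-x'\rangle\,d\pi.
\end{align*}
The Lipschitz continuity of $\mathscr L$ controls the first bracket by $C\Delta$, and the linear momentum contributes $k\Delta|p|+|v^*||p-q|$. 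By Remark~\ref{rem:invplan}, the two measure integrals collapse into the single pairing $\lambda\iint\langle w(x')+w^*(y'),x'-y'\rangle\,d\pi(x',y')$, which is the core quantity to be estimated.

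The main obstacle is precisely this measure pairing: writing $w(x')+w^*(y')=[w(x')-\tilde w(x')]+[\tilde w(x')+w^*(y')]$, using the bound on $|w-\tilde w|$ just derived, the uniform boundedness of $w^*$, and Cauchy--Schwarz combined with the optimality of $\pi$ (so that $\iint|x'-y'|^2\,d\pi=W_2^2(\mu,\nu)$), one bounds this term by a multiple of $\lambda(\Delta+W_2(\mu,\nu))W_2(\mu,\nu)$, producing the claimed $\lambda\Delta W_2(\mu,\nu)$-type contribution. The delicate points are the Borel measurability of the projection-based selection $w$ (resolved via Lemma~\ref{lemma:projection}) and the exploitation of the cancellation encoded in $p^\pi$ versus $p^{\pi^{-1}}$ via Remark~\ref{rem:invplan}: without this symmetry one would only obtain a crude $O(\lambda)$ bound rather than the finer bound in terms of $W_2(\mu,\nu)$.
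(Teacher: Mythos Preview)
Your overall strategy matches the paper's: take a minimizer $(\bar\eta,\bar w)$ (your $(v^*,w^*)$) for the second Hamiltonian and build nearby admissible controls for the first. The handling of the $F$-component and of $\mathscr L$ is the same in both arguments.

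The real divergence is in how the $G$-selection is passed from $\nu$ to $\mu$, and this is where your argument breaks. The paper does \emph{not} transport $\bar w$ along $\pi$. It treats $\bar w$ as a globally defined Borel selection of $z\mapsto G(s,y,z,\nu)$, applies $(D_G)$ at the \emph{same} space point $z$ to produce $w(z)\in G(t,x,z,\mu)$ with $|w(z)-\bar w(z)|\le k\Delta$, and then invokes Remark~\ref{rem:invplan} to rewrite $\int\langle \bar w, p^{\pi^{-1}}\rangle\,d\nu$ as $\int\langle \bar w, p^{\pi}\rangle\,d\mu$ with the \emph{same} function $\bar w$. This collapses the measure term to $\lambda\int\langle w-\bar w, p^\pi\rangle\,d\mu$, which is immediately $\le \lambda k\Delta\, W_2(\mu,\nu)$ because $|w-\bar w|\le k\Delta$ pointwise.

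Your barycentric transport $\tilde w(x')=\int w^*(y')\,d\pi_{x'}(y')$ followed by projection does not give the stated bound. After the (correct) identity
\[
\lambda\langle p^\pi,w\rangle_{L^2_\mu}-\lambda\langle p^{\pi^{-1}},w^*\rangle_{L^2_\nu}
=\lambda\iint\langle w(x')+w^*(y'),\,x'-y'\rangle\,d\pi(x',y'),
\]
your decomposition $[w-\tilde w]+[\tilde w+w^*]$ leaves the term $\iint\langle \tilde w(x')+w^*(y'),x'-y'\rangle\,d\pi$. This bracket is a \emph{sum}, not a difference: $\tilde w$ and $w^*$ are each of size $\|G\|_\infty$ and there is no cancellation between them. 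Hence that piece is only controlled by $2\lambda\|G\|_\infty\, W_2(\mu,\nu)$, which carries no factor $\Delta$. The ``uniform boundedness of $w^*$'' you invoke is exactly what prevents the $\Delta$ from appearing. So the claimed bound $\lambda(\Delta+W_2)W_2$ does not follow from your decomposition; you are missing the cancellation that the paper obtains by evaluating $\bar w$ at the same point in both integrals via Remark~\ref{rem:invplan}.
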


\begin{proof}
Let $\pi\in \Pi_0(\mu,\nu)$. Using assumption (A) and recalling the definition (\ref{eq:definitionham}) we have
\begin{align*}
&H(t,x,\mu,p,\lambda p^\pi)-H(s,y,\nu,q,\lambda p^{\pi^{-1}})\\
&\leq C(|x-y|+W_2(\mu,\nu)) + \inf_{\eta \in F(t,x,\mu), w(\cdot)\in G(t,x,\cdot , \mu)} \{\langle p,\eta\rangle+ \lambda\langle w,p^\pi\rangle\}  \\
&-  
\inf_{\bar\eta \in F(s,y,\nu), \bar w(\cdot)\in G(s,y,\cdot , \nu)} \{\langle q,\bar\eta\rangle+ \lambda\langle \bar w,p^{\pi^{-1}}\rangle\}
\end{align*}
Consider $(\bar{\eta},\bar{w}(\cdot))$ such that 
\[
\inf_{\bar\eta \in F(s,y,\nu), \bar w(\cdot)\in G(s,y,\cdot , \nu)} \{\langle q,\bar\eta\rangle+ \lambda\langle \bar w,p^{\pi^{-1}}\rangle\}=\langle q,\bar\eta\rangle+ \lambda\langle \bar w,p^{\pi^{-1}}\rangle.
\]

Recalling the Lipschitz continuity of $F$, $G$, it is possible to find $\psi\in\mathbb R^d$, $|\psi|\le 1$ and a measurable
$\xi(\cdot)$ with $|\xi(x)|\le 1$ for all $x\in X$ such that $(\eta,w(\cdot))=(\bar{\eta}+k\Delta\psi, \bar w(\cdot)+k\Delta, \xi )$, where $\Delta=d_X((t,x,\mu),(s,y,\nu))$. Using Remark \ref{rem:invplan}, we get
{\small\begin{align*}
	&H(t,x,\mu,p,\lambda p^\pi)-H(s,y,\nu,q,\lambda p^{\pi^{-1}})\\
	&\leq  C(|x-y|+W_2(\mu,\nu)) +\langle k\Delta\psi,p\rangle + \langle p-q,\bar\eta\rangle +\lambda \int_{\mathbb{T}^d} \langle  w(x),p^\pi \rangle \, d \mu(x)- \lambda \int_{\mathbb{T}^d} \langle  \bar w(y),p^{\pi^{-1}} \rangle \, d \nu(y)\\
	&\leq C(|x-y|+W_2(\mu,\nu)) +k\Delta\psi|p|+ |\bar\eta||p-q| +\lambda \int_{\mathbb{T}^d} \langle  w(x),p^\pi \rangle \, d \mu(x)- \lambda \int_{\mathbb{T}^d} \langle  \bar w(x),p^{\pi} \rangle \, d \mu(x)\\
&=C(|x-y|+W_2(\mu,\nu)) +k\Delta\psi|p|+ |\bar\eta||p-q| + \lambda \Delta \int_{\mathbb{T}^d \times \mathbb{T}^d} \langle  \xi(x),x-y \rangle \, d \pi(x,y)\\
&\leq C \Delta +k\Delta\psi|p|+ |\bar\eta||p-q| +\lambda \Delta W_2(\mu,\nu).
\end{align*}}	
\end{proof}
\begin{theorem}[Comparison Principle]
Let $X:=[0,T]\times \mathbb{T}^d\times \mathscr{P}(\mathbb{T}^d)$.
Let $V_1,V_2:X\rightarrow \mathbb{R}$ be subsolution and supersolution  of  \eqref{eq:HJB}, respectively. If $V_1$ and $V_2$ are bounded Lipschitz continuous functions and $V_1(T,\cdot,\cdot)=V_2(T,\cdot,\cdot)$ then $V_1(\cdot,\cdot,\cdot)\leq V_2(\cdot,\cdot,\cdot)$
\end{theorem}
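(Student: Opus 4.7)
The plan is a doubling of variables argument, with Proposition \ref{prop:estimateH} supplying the key Hamiltonian estimate at the doubled extremum. Arguing by contradiction, suppose $\sup_X(V_1 - V_2) > 0$. Since $V_1(T, \cdot, \cdot) = V_2(T, \cdot, \cdot)$ and both $V_1, V_2$ are Lipschitz continuous on the compact space $X = [0,T] \times \mathbb{T}^d \times \mathscr{P}(\mathbb{T}^d)$, I can fix $\eta > 0$ so small that $\sup_X \{V_1 - V_2 - \eta(T - t)\}$ remains strictly positive and is attained at some point with $t < T$.

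For each $\alpha > 0$, consider the functional
\[
\Phi_\alpha(t, s, x, y, \mu, \nu) := V_1(t, x, \mu) - V_2(s, y, \nu) - \frac{1}{2\alpha}\bigl[(t-s)^2 + |x-y|^2 + W_2^2(\mu, \nu)\bigr] - \eta(T - t)
\]
on $X \times X$. By compactness, $\Phi_\alpha$ attains a maximum at some point $(t_\alpha, s_\alpha, x_\alpha, y_\alpha, \mu_\alpha, \nu_\alpha)$. Comparing this maximum with the diagonal value $\Phi_\alpha(t_\alpha, t_\alpha, x_\alpha, x_\alpha, \mu_\alpha, \mu_\alpha)$ and using the Lipschitz continuity of $V_2$ yields $\Delta_\alpha := |t_\alpha - s_\alpha| + |x_\alpha - y_\alpha| + W_2(\mu_\alpha, \nu_\alpha) = O(\alpha)$ as $\alpha \to 0^+$. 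For $\alpha$ small enough, $t_\alpha$ and $s_\alpha$ stay strictly below $T$ and $\Phi_\alpha$ at the maximum remains above a positive threshold.

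To extract super/subdifferential elements, fix $\pi_\alpha \in \Pi_o(\mu_\alpha, \nu_\alpha)$. For any admissible variation $\{\pi_h\}$ from $\mu_\alpha$ with $\mu_h := \mathrm{pr}_2 \sharp \pi_h$, the standard coupling inequality
\[
W_2^2(\mu_h, \nu_\alpha) \leq W_2^2(\mu_\alpha, \nu_\alpha) + 2\iint \langle p^{\pi_\alpha}(x), \xi - x\rangle \, d\pi_h(x, \xi) + \bigl(W_2^{\pi_h}(\mu_\alpha, \mu_h)\bigr)^2,
\]
obtained by gluing the marginals $\pi_h$ and $\pi_\alpha$ on $\mu_\alpha$, together with condition (c) of Definition \ref{def:adm-var} (which makes the remainder $O(h^2) = o(h)$ at fixed $\alpha$), shows that $(p_{t_\alpha}, p_{x_\alpha}, p_{\mu_\alpha}) := \bigl(\tfrac{t_\alpha - s_\alpha}{\alpha} - \eta, \tfrac{x_\alpha - y_\alpha}{\alpha}, \tfrac{1}{\alpha} p^{\pi_\alpha}\bigr) \in \partial^+ V_1(t_\alpha, x_\alpha, \mu_\alpha)$. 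Analogously, using $\pi_\alpha^{-1} \in \Pi_o(\nu_\alpha, \mu_\alpha)$ for the map $\nu \mapsto \tfrac{1}{2\alpha} W_2^2(\mu_\alpha, \nu)$, one obtains the corresponding element $(q_{s_\alpha}, q_{y_\alpha}, q_{\nu_\alpha}) \in \partial^- V_2(s_\alpha, y_\alpha, \nu_\alpha)$ whose measure component pairs with $\tfrac{1}{\alpha} p^{\pi_\alpha^{-1}}$ in the sense required by Proposition \ref{prop:estimateH}.

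Applying the subsolution inequality for $V_1$ and the supersolution inequality for $V_2$, then subtracting and using $p_{x_\alpha} = q_{y_\alpha}$ and $p_{t_\alpha} - q_{s_\alpha} = -\eta$, yields
\[
\eta \leq H\bigl(t_\alpha, x_\alpha, \mu_\alpha, p_{x_\alpha}, \tfrac{1}{\alpha} p^{\pi_\alpha}\bigr) - H\bigl(s_\alpha, y_\alpha, \nu_\alpha, p_{x_\alpha}, \tfrac{1}{\alpha} p^{\pi_\alpha^{-1}}\bigr).
\]
Invoking Proposition \ref{prop:estimateH} with $\lambda = 1/\alpha$ bounds the right-hand side by $C \Delta_\alpha + k \Delta_\alpha |p_{x_\alpha}| + \Delta_\alpha W_2(\mu_\alpha, \nu_\alpha)/\alpha$. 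Since $|p_{x_\alpha}|$ is uniformly bounded by $\mathrm{Lip}(V_1)$, $\Delta_\alpha \to 0$, and $\Delta_\alpha W_2(\mu_\alpha, \nu_\alpha)/\alpha \leq \Delta_\alpha^2/\alpha = O(\alpha) \to 0$, the limit $\alpha \to 0^+$ produces $\eta \leq 0$, contradicting the choice of $\eta > 0$. The main technical obstacle is the justification of the measure component of the super/subdifferentials in the sense of admissible variations---concretely, that the formal Wasserstein gradient $p^{\pi_\alpha}/\alpha$ really belongs to $\partial^+_\mu V_1$ at $\mu_\alpha$, and symmetrically for $V_2$; this hinges on the coupling inequality above combined with the linear-rate control (c) of Definition \ref{def:adm-var}, with some additional care required to match the sign conventions of Proposition \ref{prop:estimateH}.
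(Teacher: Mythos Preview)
Your proposal is correct and follows essentially the same doubling-of-variables strategy as the paper: penalize the diagonal with a quadratic cost $\tfrac{1}{2\alpha}d_X^2$, extract the measure components of the super/subdifferential via a gluing argument that bounds $W_2^2(\mu_h,\nu_\alpha)-W_2^2(\mu_\alpha,\nu_\alpha)$ linearly in $\int\langle p^{\pi_\alpha}(x),\xi-x\rangle\,d\pi_h$, and then feed the resulting inequalities into Proposition~\ref{prop:estimateH} to force $\eta\le 0$. The only cosmetic differences are that the paper minimizes $V_2-V_1+\text{penalty}$ (placing the $\eta$-term on the $s$-variable) while you maximize $V_1-V_2-\text{penalty}$ (placing it on $t$), and that the paper writes out the gluing computation more explicitly; the substance is the same.
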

\begin{proof}
By contradiction we suppose that 
\[-\xi := \min_{X} V_2-V_1 <0,\]
which is achieved at $(t_0,\xi_0,\mu_0)$. We observe that $t_0\neq T$ always holds. \par
For $\epsilon\in]0, \min\{1,\frac{1}{16\mathrm{Lip}(V_1)}\}[$, there exists $\eta \in ]0,\eta^*(\epsilon)[$ such that 
\begin{itemize}
\item[(a)] $\eta T\leq \frac{\xi}{4}$
\item[(b)] $\epsilon(\eta+Lip(V_1))Lip(V_1)\leq \frac{\xi}{2}$
\item[(c)] $4(\eta+Lip(V_1))^2\epsilon<\eta$.
\end{itemize}
We define $\phi_{\epsilon, \eta}:X\rightarrow \mathbb{R}$ as follows
\begin{align}
\phi_{\epsilon, \eta}((t,x,\mu),(s,y,\nu)):= V_2(t,x,\mu)-V_1(s,y,\nu) + \frac{1}{2\epsilon} d_X^2((t,x,\mu),(s,y,\nu)) -\eta s
\end{align}
Since $\phi_{\epsilon,\eta}$ is continuous on the compact set X then $\phi_{\epsilon,\eta}$ has a minimum in\par
$z_{\epsilon,\eta}:=((t_{\epsilon,\eta},x_{\epsilon,\eta},\mu_{\epsilon,\eta}),(s_{\epsilon,\eta},y_{\epsilon,\eta},\nu_{\epsilon,\eta}))$.
Set $\rho_\epsilon:=d_X ((t_{\epsilon,\eta},x_{\epsilon,\eta},\mu_{\epsilon,\eta}),(s_{\epsilon,\eta},y_{\epsilon,\eta},\nu_{\epsilon,\eta}))$. Our goal is to estimate $\phi_{\epsilon,\eta}(z_{\epsilon,\eta})$. So,
\begin{align*}
\phi_{\epsilon,\eta}(z_{\epsilon,\eta})&= V_2(t_{\epsilon,\eta},x_{\epsilon,\eta},\mu_{\epsilon,\eta})- V_1(s_{\epsilon,\eta},y_{\epsilon,\eta},\nu_{\epsilon,\eta}) + \frac{1}{2\epsilon}\rho_\epsilon^2 -\eta s_{\epsilon,\eta}\\
& \leq \phi_{\epsilon,\eta} ((t_{\epsilon,\eta},x_{\epsilon,\eta},\mu_{\epsilon,\eta}),(t_{\epsilon,\eta},x_{\epsilon,\eta},\mu_{\epsilon,\eta}))= V_2(t_{\epsilon,\eta},x_{\epsilon,\eta},\mu_{\epsilon,\eta})-V_1(t_{\epsilon,\eta},x_{\epsilon,\eta},\mu_{\epsilon,\eta})- \eta t_{\epsilon,\eta}
\end{align*}
Recalling that $V_1$ is bounded Lipschitz continuous function , we get
\begin{align*}
	\rho_\epsilon^2 \leq 2\epsilon [ \eta (s_{\epsilon,\eta}-t_{\epsilon,\eta})+  \mbox{Lip}(V_1)(\rho_\epsilon)]\leq  2\epsilon(\eta +\mbox{Lip}(V_1))\rho_\epsilon 
\end{align*}
and so
\begin{align}\label{eq:estrho}
\rho_\epsilon \leq 2(\eta+Lip(V_1))\epsilon.
\end{align}

Moreover, recalling that $\phi_{\epsilon,\eta}$ has a minimum in $z_{\epsilon,\eta}$, we have that
\begin{align*}
\phi_{\epsilon,\eta}(z_{\epsilon,\eta})\leq V_2(t_0,x_0,\mu_0)-V_1(t_0,x_0,\mu_0)-\eta t_0=-\xi -\eta t_0.
\end{align*}
Using the $V_1$ is Lupschitz continuous and \eqref{eq:estrho}, we get
\begin{align*}
&0\leq \frac{1}{2\epsilon} \rho^2_\epsilon \leq -\xi + \eta(s_{\epsilon,\eta}-t_0)+ V_1(t_{\epsilon,\eta},x_{\epsilon,\eta},\mu_{\epsilon,\eta})-V_2(t_{\epsilon,\eta},x_{\epsilon,\eta},\mu_{\epsilon,\eta})+ Lip(V_1)\rho_{\epsilon}\\
&\leq -\xi + \eta(s_{\epsilon,\eta}-t_0)+  \rho_\epsilon Lip(V_1)+ V_1(t_{\epsilon,\eta},x_{\epsilon,\eta},\mu_{\epsilon,\eta})-V_2(t_{\epsilon,\eta},x_{\epsilon,\eta},\mu_{\epsilon,\eta})\\
&\leq -\xi + \eta(s_{\epsilon,\eta}-t_0)+ 2\epsilon (\eta+Lip(V_1))Lip(V_1)+V_1(t_{\epsilon,\eta},x_{\epsilon,\eta},\mu_{\epsilon,\eta})-V_2(t_{\epsilon,\eta},x_{\epsilon,\eta},\mu_{\epsilon,\eta})
\end{align*}
Recalling the properties (a) and (b) of $\eta$ we have that
\begin{align*}	
\frac{\xi}{2}\leq V_1(t_{\epsilon,\eta},x_{\epsilon,\eta},\mu_{\epsilon,\eta})-V_2(t_{\epsilon,\eta},x_{\epsilon,\eta},\mu_{\epsilon,\eta})
\end{align*}
Consequently $t_{\epsilon, \eta}\neq T$, because the right hand side would be equal to zero. Similarly $s_{\epsilon, \eta}\neq T$.\par\medskip\par

Now, we know that 
\begin{equation}
\phi_{\epsilon,\eta}(z_{\epsilon,\eta})\leq \phi_{\epsilon,\eta}((t_{\epsilon,\eta},x_{\epsilon,\eta},\mu_{\epsilon,\eta}),(s,y,\nu)) \ \ \forall (s,y,\nu)
\end{equation}
and so
\begin{align} \label{eq:estcomp}
&V_1(s,y,\nu)-V_1(s_{\epsilon,\eta},y_{\epsilon,\eta},\nu_{\epsilon,\eta}) \leq -\frac{1}{2\epsilon}\rho_\epsilon^2 +\frac{1}{2\epsilon}d^2_x((t_{\epsilon,\eta},x_{\epsilon,\eta},\mu_{\epsilon,\eta}),(s,y,\nu))+\eta(s_{\epsilon,\eta}-s)\\
&\leq \frac{1}{2\epsilon} (|t_{\epsilon,\eta}-s|^2-|t_{\epsilon,\eta}-s_{\epsilon,\eta}|^2)+\frac{1}{2\epsilon}(|x_{\epsilon,\eta}-y|^2-|x_{\epsilon,\eta}-y_{\epsilon,\eta}|^2) \\
&+ \frac{1}{2\epsilon} (W_2^2(\nu,\mu_{\epsilon,\eta})-W^2_2(\nu_{\epsilon,\eta},\mu_{\epsilon,\eta}))+\eta(s_{\epsilon,\eta}-s)\nonumber\\
&=\frac{1}{\epsilon} (s_{\epsilon,\eta}-s)(t_{\epsilon,\eta}-s_{\epsilon,\eta} )+\frac{1}{2\epsilon} (s_{\epsilon,\eta}-s)^2+\frac{1}{\epsilon}\langle y_{\epsilon,\eta}-y,x_{\epsilon,\eta}-y_{\epsilon,\eta}\rangle +\frac{1}{2\epsilon}|y_\epsilon-y|^2+ \eta(s_{\epsilon,\eta}-s)\nonumber \\
&+ \frac{1}{2\epsilon} (W_2^2(\mu_{\epsilon,\eta},\nu)-W^2_2(\mu_{\epsilon,\eta},\nu_{\epsilon,\eta}))\nonumber
\end{align}
Take $\bar{\pi}\in \Pi_0(\mu_{\epsilon,\eta},\nu_{\epsilon,\eta})$ such that its disintegration is  $\bar{\pi}=\nu_{\epsilon,\eta}\otimes \pi^x$, where $\{\pi^x\}_{x\in \mathbb{T}^d}$ are Borel measures. Let $(\pi_h)_{h>0}\in \mathcal{P}(\mathbb{T}^d\times \mathbb{T}^d)$  such that $\mathrm{pr}_1 \sharp \pi_h=:\mu_h$ and $\mathrm{pr}_2 \sharp \pi_h=:\nu_{\epsilon,\eta}$.
Let define $\sigma \in \mathcal{P}(\mathbb{T}^d\times \mathbb{T}^d\times \mathbb{T}^d)$ such that $\sigma:= \mu_{\epsilon, \eta}\otimes \pi^x\otimes \pi_h^x$ with $\mathrm{pr}_{1,2}\sharp \sigma =\bar \pi$,  $\mathrm{pr}_{1,3}\sharp \sigma =\bar \pi_h$ and  $\mathrm{pr}_{2,3}\sharp \sigma =:\hat\pi_h \in \Pi(\mu_{\epsilon,\eta},\nu_h)$. Notice that  $\mathrm{pr}_1\sharp(\mathrm{pr}_{2,3}\sharp \sigma) =\mathrm{pr}_1\sharp \bar \pi=\nu_{\epsilon,\eta}$ and  $\mathrm{pr}_2\sharp(\mathrm{pr}_{2,3}\sharp \sigma)=\mu_h$. So $\hat \pi_h \in \Pi(\nu_{\epsilon,\eta},\mu_h)$.

Hence,
\begin{align*}
&W_2^2(\mu_h,\nu_{\epsilon,\eta})-W^2_2(\mu_{\epsilon,\eta},\nu_{\epsilon,\eta})\leq \int_{\mathbb{T}^d\times \mathbb{T}^d} |y-z|^2\,d\hat{\pi}_h(y,z)-\int_{\mathbb{T}^d\times \mathbb{T}^d} |x-y|^2\, d\bar{\pi}(x,y)\\
&= \int_{\mathbb{T}^d\times \mathbb{T}^d\times \mathbb{T}^d}|y-x+x-z|^2\,d\hat{\pi}_h(y,z)\,d\mu_{\epsilon,\eta}(x)
-\int_{\mathbb{T}^d\times \mathbb{T}^d} |x-y|^2\,d\bar{\pi}(x,y)\\
&\leq\int_{\mathbb{T}^d\times \mathbb{T}^d}|y-x|^2\,d\nu_{\epsilon,\eta}(y)\,d\mu_{\epsilon,\eta}(x)+\int_{\mathbb{T}^d\times \mathbb{T}^d} |x-z|^2\,d\mu_{h}(z)\,d\mu_{\epsilon,\eta}(x)\\
&+ 2 \int_{\mathbb{T}^d\times {\mathbb{T}^d\times \mathbb{T}^d}} \langle y-x,x-z\rangle\,d\pi_h(y,z)d\mu_{\epsilon,\eta}(x)-\int_{\mathbb{T}^d\times{\mathbb{T}^d}}|x-y|^2\, d \bar \pi(x,y)\\
&=\int_{\mathbb{T}^d\times \mathbb{T}^d} |x-z|^2\,d\mu_{h}(z)\,d\mu_{\epsilon,\eta}(x)
+2  \int_{\mathbb{T}^d\times \mathbb{T}^d} \langle y-x,x-\int_{\mathbb{T}^d}z\, d \pi_h^x(z) \rangle\,d\pi_h(x,y)
\end{align*}
The above estimate together with \eqref{eq:estcomp} gives
\begin{equation}
\left( \frac{1}{\epsilon}(s_{\epsilon,\eta}-t_{\epsilon,\eta})-\eta,\frac{y_{\epsilon,\eta}-x_{\epsilon,\eta}}{\epsilon}, \frac{1}{\epsilon}p^{\bar\pi}\right) \in \partial^+V_1(s_{\epsilon,\eta},y_{\epsilon,\eta},\nu_{\epsilon,\eta}),
\end{equation}
where $p^{\bar\pi}=\int_{\mathbb{T}^d}z\, d \pi_h^x(z)$.\par\medskip\par

In a similar way, we know that 
\begin{equation}
	\phi_{\epsilon,\eta}(z_{\epsilon,\eta})\leq \phi_{\epsilon,\eta}((t,x,\mu),((s_{\epsilon,\eta},y_{\epsilon,\eta},\nu_{\epsilon,\eta})) \ \ \forall (t,x,\mu)
\end{equation}
and so we have that
\begin{align} \label{eq:estcomp-2}
	V_2(t,x,\mu)-&V_2(t_{\epsilon,\eta},x_{\epsilon,\eta},\mu_{\epsilon,\eta})\\
	\geq&\frac{1}{2\epsilon}[\rho_\epsilon^2 -d^2_x((t,x,\mu),(s_{\epsilon,\eta},y_{\epsilon,\eta},\nu_{\epsilon,\eta}))]\\\nonumber
	\geq&\frac{1}{2\epsilon}\left[ (|t_{\epsilon,\eta}-s_{\epsilon,\eta}|^2-|t-s_{\epsilon,\eta}|^2)\right]+\frac{1}{2\epsilon}\left[|x_{\epsilon,\eta}-y_{\epsilon,\eta}|^2-|x-y_{\epsilon,\eta}|^2\right] +\\
	&+\frac{1}{2\epsilon}\left[ -W_2^2(\mu,\nu_{\epsilon,\eta})+W^2_2(\nu_{\epsilon,\eta},\mu_{\epsilon,\eta}))\right]\nonumber\\
    =&\frac{1}{\epsilon} (s-s_{\epsilon,\eta})(s_{\epsilon,\eta} -t_{\epsilon,\eta})+\frac{1}{2\epsilon} (s-s_{\epsilon,\eta})^2+\frac{1}{2\epsilon}\langle y-x_{\epsilon,\eta},y_{\epsilon,\eta}-x_{\epsilon,\eta}\rangle\nonumber.
\end{align}

Choosing $\pi_h$ as a variation in $\mu_{\epsilon,\eta}$, $\pi \in \Pi_o(\nu_{\epsilon,\eta},\mu_{\epsilon,\eta})$ and  $\mu_h:=pr_2\sharp \pi_h$ we have that
\begin{align*} 
	V_2(t,x,\mu)-&V_2(t_{\epsilon,\eta},x_{\epsilon,\eta},\mu_{\epsilon,\eta})\\
	\geq&\frac{1}{\epsilon} (t-t_{\epsilon,\eta})(t_{\epsilon,\eta} -s_{\epsilon,\eta})+\frac{1}{2\epsilon} (t-t_{\epsilon,\eta})^2+\frac{1}{2\epsilon}\langle x-x_{\epsilon,\eta},x_{\epsilon,\eta}-y_{\epsilon,\eta}\rangle\nonumber \\
	&+ o(|y_{\epsilon,\eta}-x_{\epsilon,\eta}|^2)- \frac{1}{\epsilon} \int \langle y-x,p_{\nu_{\epsilon,\eta},\mu_{\epsilon,\eta}^{\pi}}\rangle \,d{\pi}_h(x,y)
\end{align*}
Hence
\begin{equation}
	\left( -\frac{1}{\epsilon}(t_{\epsilon,\eta}-s_{\epsilon,\eta}),-\frac{x_{\epsilon,\eta}-y_{\epsilon,\eta}}{\epsilon}, \frac{1}{\epsilon}p^{\pi}\right) \in \partial^-V_2(t_{\epsilon,\eta},x_{\epsilon,\eta},\mu_{\epsilon,\eta}).
\end{equation}
Since that $V_1$ and $V_2$ are subsolution and supersolution, respectively, and $t_{\epsilon,\eta},s_{\epsilon,\eta} \neq T$, we have
\begin{align*}
&\frac{1}{\epsilon} (s_{\epsilon,\eta}-t_{\epsilon,\eta}) -\eta + H(s_{\epsilon,\eta},y_{\epsilon,\eta},\nu_{\epsilon,\eta}, \frac{y_{\epsilon,\eta}-x_{\epsilon,\eta}}{\epsilon},\frac{1}{\epsilon}p^{\bar\pi})\geq 0\\
&\frac{1}{\epsilon} (s_{\epsilon,\eta}-t_{\epsilon,\eta}) + H(t_{\epsilon,\eta},x_{\epsilon,\eta},\mu_{\epsilon,\eta}, \frac{y_{\epsilon,\eta}-x_{\epsilon,\eta}}{\epsilon},\frac{1}{\epsilon}p^{\pi})\leq 0
\end{align*}
Choosing ${\pi}=\bar{\pi}^{-1}$ and using Proposition \ref{prop:estimateH} we get
\begin{align*}
	-\eta &\geq H(t_{\epsilon,\eta},x_{\epsilon,\eta},\mu_{\epsilon,\eta}, \frac{y_{\epsilon,\eta}-x_{\epsilon,\eta}}{\epsilon},\frac{1}{\epsilon}p^{\bar{\pi}^{-1}})-H(s_{\epsilon,\eta},y_{\epsilon,\eta},\nu_{\epsilon,\eta}, \frac{y_{\epsilon,\eta}-x_{\epsilon,\eta}}{\epsilon},\frac{1}{\epsilon}p^{\bar{\pi}})\\
	&\geq -\rho_\epsilon(C+M\frac{|y_{\epsilon,\eta}-x_{\epsilon,\eta}|}{\epsilon}+\frac{1}{\epsilon}W_2(p^{\bar{\pi}^{-1}},p^{\bar\pi}))
\end{align*}
Using \eqref{eq:estrho} we get
\begin{align*}
-\eta \geq&-2\epsilon (\eta+\mathrm{Lip}(V_1))(C +M\frac{|y_{\epsilon,\eta}-x_{\epsilon,\eta}|}{\epsilon}+\frac{1}{\epsilon}W_2(p^{\bar{\pi}^{-1}},p^{\bar\pi})\\
\geq&-2(\eta+\mathrm{Lip}(V_1))\rho_\epsilon\geq -4\epsilon (\eta+\mathrm{Lip}(V_1))^2,
\end{align*}
which contradicts the property (c) of $\eta$.
\end{proof}

\appendix
\section{Some technical lemmas}

\begin{lemma}\label{lemma:absfil}
Let $\mathbb X,\mathbb Y,\mathbb Z$ be metric spaces.
Let $T:\mathbb X\times \mathbb Y\times\mathbb X\to (\mathbb X\times \mathbb Y)\times (\mathbb X\times \mathbb Y)$, $E:\mathbb X\times\mathbb Y\to \mathbb Z$ be Borel maps.
Given $\boldsymbol\xi\in\mathscr P(\mathbb X\times \mathbb Y\times\mathbb X)$,
define $\boldsymbol\eta=\mathrm{pr}_{12}\sharp\boldsymbol\xi$, $\boldsymbol\pi=\mathrm{pr}_{13}\sharp\boldsymbol\xi$, and
$\boldsymbol{\hat\eta}=(\mathrm{pr}_2\circ T)\sharp \boldsymbol\xi$.
Then
\[W_2(E\sharp\boldsymbol\eta,E\sharp\boldsymbol{\hat\eta})\le \|E\circ\mathrm{pr}_{12}-E\circ\mathrm{pr}_2\circ T\|_{L^2_{\boldsymbol\xi}}.\]
Moreover, if $T$ is continuous we have $\mathrm{supp}(\boldsymbol{\hat\eta})\subseteq \overline{\mathrm{pr}_2\circ T(\mathrm{supp}\,\boldsymbol\xi)}$.
\end{lemma}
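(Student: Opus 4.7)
The natural plan is to build an explicit transport plan between $E\sharp\boldsymbol\eta$ and $E\sharp\boldsymbol{\hat\eta}$ directly out of $\boldsymbol\xi$ and estimate its cost. Specifically, I would define the coupling
\[\sigma:=\bigl(E\circ\mathrm{pr}_{12},\;E\circ\mathrm{pr}_2\circ T\bigr)\sharp\boldsymbol\xi\;\in\;\mathscr P(\mathbb Z\times\mathbb Z),\]
which makes sense since $E$ and $T$ are Borel. A routine marginal computation gives $\mathrm{pr}_1\sharp\sigma=E\sharp(\mathrm{pr}_{12}\sharp\boldsymbol\xi)=E\sharp\boldsymbol\eta$ and $\mathrm{pr}_2\sharp\sigma=E\sharp((\mathrm{pr}_2\circ T)\sharp\boldsymbol\xi)=E\sharp\boldsymbol{\hat\eta}$, so $\sigma\in\Pi(E\sharp\boldsymbol\eta,E\sharp\boldsymbol{\hat\eta})$.

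Once this plan is in hand, the definition of $W_2$ as an infimum over transport plans immediately yields
\[W_2^2(E\sharp\boldsymbol\eta,E\sharp\boldsymbol{\hat\eta})\le\iint_{\mathbb Z\times\mathbb Z}d_{\mathbb Z}^2(z_1,z_2)\,d\sigma(z_1,z_2)=\int_{\mathbb X\times\mathbb Y\times\mathbb X}d_{\mathbb Z}^2\bigl(E\circ\mathrm{pr}_{12},\,E\circ\mathrm{pr}_2\circ T\bigr)\,d\boldsymbol\xi,\]
by the change-of-variables formula for pushforward measures. Taking square roots recognizes the right-hand side as $\|E\circ\mathrm{pr}_{12}-E\circ\mathrm{pr}_2\circ T\|_{L^2_{\boldsymbol\xi}}$, which is the desired inequality. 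There is no real obstacle here: the entire content of the first assertion is just choosing the right candidate plan, and the bound is then a one-line application of the definition.

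For the support statement, I would use the standard fact that for a continuous map $\Phi\colon A\to B$ between metric spaces and a Borel probability $\mu$ on $A$, one has $\mathrm{supp}(\Phi\sharp\mu)\subseteq\overline{\Phi(\mathrm{supp}\,\mu)}$ (open sets disjoint from the right-hand side pull back to open sets disjoint from $\mathrm{supp}\,\mu$, hence of $\mu$-measure zero). Applying this with $\Phi=\mathrm{pr}_2\circ T$, which is continuous when $T$ is, gives $\mathrm{supp}(\boldsymbol{\hat\eta})\subseteq\overline{\mathrm{pr}_2\circ T(\mathrm{supp}\,\boldsymbol\xi)}$. The only point to check with any care is that the cited fact really holds in general metric spaces, but a direct open-set argument makes this transparent, so no serious difficulty is expected.
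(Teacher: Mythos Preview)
Your argument is correct and is exactly the standard pushforward-coupling computation that the paper has in mind when it refers to Section~5.2 of \cite{AGS}; the candidate plan $\sigma=(E\circ\mathrm{pr}_{12},\,E\circ\mathrm{pr}_2\circ T)\sharp\boldsymbol\xi$ and the ensuing cost estimate are precisely what that reference provides. The support assertion is likewise the routine continuity argument you describe, so there is nothing to add.
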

\begin{proof}
See e.g. Section 5.2 in \cite{AGS}. Notice that, starting from given $\boldsymbol\eta\in\mathscr P(\mathbb X\times \mathbb Y)$, $\boldsymbol\pi\in \mathscr P(\mathbb X\times \mathbb X)$ such that $\mathrm{pr}_1\sharp\boldsymbol\eta=\mathrm{pr}_1\sharp\boldsymbol\pi$,
according to Lemma 5.3.2 in \cite{AGS} there exists $\boldsymbol\xi\in \mathscr P(\mathbb X\times \mathbb Y\times \mathbb X)$ such that $\boldsymbol\eta=\mathrm{pr}_{12}\sharp\boldsymbol\xi$, $\boldsymbol\pi=\mathrm{pr}_{13}\sharp\boldsymbol\xi$.
\end{proof}

\begin{corollary}
Set $\mathbb X=\mathbb T^d$, $\mathbb Y=\Gamma_I$, and define the continuous map $T:\mathbb T^d\times \Gamma_I\times \mathbb T^d\to (\mathbb T^d\times \Gamma_I)\times(\mathbb T^d\times \Gamma_I)$ by
\[T(x_1,\gamma_1,x_2)=((x_1,\gamma_1),(x_2,\gamma_2)),\textrm{ where }\gamma_2(t)=\gamma_1(t)-x_1+x_2\textrm{ for all }t\in I.\]
Then given $\boldsymbol\eta\in\mathscr P(X\times Y)$ and $\boldsymbol\pi\in\mathscr P(X\times X)$ with $\mu:=\mathrm{pr}_1\sharp\boldsymbol\eta=\mathrm{pr}_1\sharp\boldsymbol\pi$,
there exists $\boldsymbol{\hat\eta}\in\mathscr P(X\times Y)$ satisfying
\begin{align*}
W^2_2(\boldsymbol\eta,\boldsymbol{\hat\eta})&\le\|(x_1-x_2,x_1-x_2)\|^2_{L^2_{\boldsymbol\xi}}=2\iint_{X\times X}|x_1-x_2|^2\,d\boldsymbol\pi(x_1,x_2),\\
W^2_2(e_t\sharp\boldsymbol\eta,e_t\sharp\boldsymbol{\hat\eta})&\le\|\gamma_1(t)-\gamma_2(t)\|^2_{L^2_{\boldsymbol\xi}}=\iint_{X\times X}|x_1-x_2|^2\,d\boldsymbol\pi(x_1,x_2),
\end{align*}
where we choose $\mathbb Z=X\times Y$ and $E=\mathrm{Id}_{\mathbb Z}$ in the first case, and $\mathbb Z=\mathbb X$ with $E=e_t$ in the second case.
\par\medskip\par
In particular, if $\boldsymbol\pi\in \Pi_o(\mu,\nu)$, we have $W_2(\boldsymbol\eta,\boldsymbol{\hat\eta})\le \sqrt 2\cdot W_2(\mu,\nu)$ and 
$W_2(e_t\sharp\boldsymbol\eta,e_t\sharp\boldsymbol{\hat\eta})\le W_2(\mu,\nu)$.
\end{corollary}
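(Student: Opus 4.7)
The statement is essentially an application of the preceding Lemma \ref{lemma:absfil} to the explicit shift map $T(x_1,\gamma_1,x_2)=((x_1,\gamma_1),(x_2,\gamma_1-x_1+x_2))$, so the plan is to (i) construct the right gluing measure $\boldsymbol\xi$ on $X\times Y\times X$, (ii) read off $\boldsymbol{\hat\eta}$ as a pushforward of $\boldsymbol\xi$ under $\mathrm{pr}_2\circ T$, and (iii) compute the two $L^2_{\boldsymbol\xi}$-norms appearing in the conclusion of Lemma \ref{lemma:absfil}.

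First, I would invoke the gluing lemma (Lemma 5.3.2 in \cite{AGS}, as recalled at the end of Lemma \ref{lemma:absfil}) to produce $\boldsymbol\xi\in\mathscr P(X\times Y\times X)$ satisfying $\mathrm{pr}_{12}\sharp\boldsymbol\xi=\boldsymbol\eta$ and $\mathrm{pr}_{13}\sharp\boldsymbol\xi=\boldsymbol\pi$; this is possible precisely because $\boldsymbol\eta$ and $\boldsymbol\pi$ share the common first marginal $\mu$. I then set $\boldsymbol{\hat\eta}:=(\mathrm{pr}_2\circ T)\sharp\boldsymbol\xi$, which is of the form prescribed by the hypotheses of Lemma \ref{lemma:absfil}.

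The key structural fact to exploit is that under $T$ the two curves $\gamma_1$ and $\gamma_2=\gamma_1-x_1+x_2$ differ by a constant translation, so
\[\gamma_1(t)-\gamma_2(t)=x_1-x_2\quad\text{for every }t\in I,\]
whence $\|\gamma_1-\gamma_2\|_{C^0(I;X)}=|x_1-x_2|$. Applying Lemma \ref{lemma:absfil} with $\mathbb Z=X\times Y$ (equipped with the product metric) and $E=\mathrm{Id}$ gives
\[W_2^2(\boldsymbol\eta,\boldsymbol{\hat\eta})\le\iiint_{X\times Y\times X}\bigl(|x_1-x_2|^2+\|\gamma_1-\gamma_2\|_{C^0}^2\bigr)\,d\boldsymbol\xi=2\iint_{X\times X}|x_1-x_2|^2\,d\boldsymbol\pi,\]
using $\mathrm{pr}_{13}\sharp\boldsymbol\xi=\boldsymbol\pi$ in the last step. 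Applying the same lemma with $\mathbb Z=X$ and $E=e_t$ (which is $1$-Lipschitz) gives
\[W_2^2(e_t\sharp\boldsymbol\eta,e_t\sharp\boldsymbol{\hat\eta})\le\iiint_{X\times Y\times X}|\gamma_1(t)-\gamma_2(t)|^2\,d\boldsymbol\xi=\iint_{X\times X}|x_1-x_2|^2\,d\boldsymbol\pi,\]
by the same identity $\gamma_1(t)-\gamma_2(t)=x_1-x_2$ and the marginal condition on $\boldsymbol\xi$.

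Finally, when $\boldsymbol\pi\in\Pi_o(\mu,\nu)$ the right-hand sides become $2W_2^2(\mu,\nu)$ and $W_2^2(\mu,\nu)$ respectively, by definition of the Wasserstein distance; taking square roots yields the two stated estimates. The only delicate point is the very first step, namely insisting on the gluing lemma to obtain a plan $\boldsymbol\xi$ with the two prescribed two-dimensional marginals $\boldsymbol\eta$ and $\boldsymbol\pi$; everything else reduces to the translation-invariance identity above together with direct substitution into Lemma \ref{lemma:absfil}.
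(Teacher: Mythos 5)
Your proof is correct and is essentially the argument the paper intends: the corollary is proved by applying Lemma \ref{lemma:absfil} with the stated choices of $\mathbb Z$ and $E$ after producing $\boldsymbol\xi$ via the gluing lemma, and your translation identity $\gamma_1(t)-\gamma_2(t)=x_1-x_2$ is exactly what makes the two $L^2_{\boldsymbol\xi}$-norms reduce to integrals against $\boldsymbol\pi$. Nothing is missing.
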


\begin{lemma}[Projection]\label{lemma:projection}
Let $\mathbb X,\mathbb Y$ be metric spaces, $\mathbb X$ separable, $h:\mathbb X\to\mathbb Y$ be continuous. Suppose that $\mathbb F:\mathbb X\rightrightarrows \mathbb Y$ is a weakly measurable set valued map with compact values.
Then there exists a Borel map $\mathrm{Pr}_{\mathbb F}:\mathbb X\to \mathbb Y$ such that 
\begin{align*}
\mathrm{Pr}_{\mathbb F}(x)\in \mathbb F(x),\textrm{ for all }x\in \mathbb X,&&\mathrm{dist}(h(x),\mathbb F(x))=d_{\mathbb Y}(h(x),\mathrm{Pr}_{\mathbb F}(x)).
\end{align*}
\end{lemma}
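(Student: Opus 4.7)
The plan is to reduce the existence of such a Borel measurable projection to a classical measurable selection theorem applied to the set-valued map of minimizers. First I would introduce the value function
\[\psi(x) := \mathrm{dist}(h(x), \mathbb F(x)) = \inf_{y \in \mathbb F(x)} d_{\mathbb Y}(h(x), y),\]
and verify that $\psi$ is Borel measurable on $\mathbb X$. Since $\mathbb X$ is separable and $\mathbb F$ is weakly measurable with compact (hence closed) values, by Castaing's representation theorem (see e.g. Theorem 8.1.4 in \cite{AuF}) there exists a countable family of Borel selections $\{f_n: \mathbb X \to \mathbb Y\}_{n \in \mathbb N}$ of $\mathbb F$ such that $\mathbb F(x) = \overline{\{f_n(x) : n \in \mathbb N\}}$ for every $x \in \mathbb X$. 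Since $h$ is continuous and the distance $d_{\mathbb Y}$ is continuous, each $x \mapsto d_{\mathbb Y}(h(x), f_n(x))$ is Borel; hence $\psi(x) = \inf_n d_{\mathbb Y}(h(x), f_n(x))$ is Borel measurable.

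Next I would define the set-valued map $\mathbb G : \mathbb X \rightrightarrows \mathbb Y$ by
\[\mathbb G(x) := \{y \in \mathbb F(x) : d_{\mathbb Y}(h(x), y) = \psi(x)\}= \mathbb F(x) \cap B(x),\]
where $B(x) := \{y \in \mathbb Y : d_{\mathbb Y}(h(x), y) \leq \psi(x)\}$ is a closed ball in $\mathbb Y$. For each fixed $x$ the compactness of $\mathbb F(x)$ together with the continuity of $y \mapsto d_{\mathbb Y}(h(x), y)$ guarantees that the infimum is attained, so $\mathbb G(x)$ is a non-empty closed (hence compact, being contained in $\mathbb F(x)$) subset of $\mathbb F(x)$. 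The critical step is to show that $\mathbb G$ is itself weakly measurable. The set-valued map $B(\cdot)$ has closed values, and the map $(x,y) \mapsto d_{\mathbb Y}(h(x), y) - \psi(x)$ is Borel on $\mathbb X \times \mathbb Y$ (as the difference of a Carathéodory map and a Borel function depending only on $x$), so $B$ is measurable in the sense that its graph lies in $\mathrm{Bor}(\mathbb X) \otimes \mathrm{Bor}(\mathbb Y)$; the intersection of the weakly measurable compact-valued map $\mathbb F$ with the closed-valued measurable map $B$ is again weakly measurable (see e.g. Theorem 8.2.4 in \cite{AuF}).

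Finally, I would apply the Kuratowski--Ryll-Nardzewski selection theorem to $\mathbb G$, which has non-empty closed values in the separable metric space generated by $\bigcup_n \{f_n(x)\}$, to obtain a Borel measurable selection $\mathrm{Pr}_{\mathbb F}: \mathbb X \to \mathbb Y$. By construction, $\mathrm{Pr}_{\mathbb F}(x) \in \mathbb G(x) \subseteq \mathbb F(x)$ and $d_{\mathbb Y}(h(x), \mathrm{Pr}_{\mathbb F}(x)) = \psi(x) = \mathrm{dist}(h(x), \mathbb F(x))$ for every $x \in \mathbb X$, which is the desired conclusion.

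The main obstacle is the weak measurability of the argmin map $\mathbb G$; once this is established, the Borel selection is standard. An alternative which bypasses part of this work is to invoke directly a theorem on measurable selections of optimal solutions to a parametric minimization problem with continuous cost and measurable compact-valued constraint map (e.g. Theorem 8.2.11 in \cite{AuF}), applied to the Carathéodory cost $(x,y) \mapsto d_{\mathbb Y}(h(x), y)$ and the constraint $y \in \mathbb F(x)$.
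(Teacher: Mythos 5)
Your proposal is correct, and it is structurally parallel to the paper's argument while leaning on different packaged theorems, so a short comparison is in order. The paper first proves measurability of the value function $x\mapsto \mathrm{dist}(h(x),\mathbb F(x))$ by noting that $g(x,y)=\mathrm{dist}(y,\mathbb F(x))$ is measurable in $x$ (Theorem 3.5 in \cite{Him}) and continuous in $y$, hence jointly measurable, and composing with the continuous map $x\mapsto (x,h(x))$; it then feeds the Carath\'eodory function $f(x,y)=d_{\mathbb Y}(h(x),y)-\mathrm{dist}(h(x),\mathbb F(x))$, whose zero set inside the compact $\mathbb F(x)$ is nonempty, into Filippov's measurable implicit function theorem (Theorem 7.1 in \cite{Him}) to get the selection. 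You obtain the measurability of the same value function via a Castaing representation and then select measurably from the argmin map, either through the intersection-plus-Kuratowski--Ryll-Nardzewski route or directly via the parametric-minimization selection theorem of \cite{AuF}; this is essentially Filippov's theorem unfolded, so the mathematical content coincides. Of your two variants, the direct argmin-selection one is the cleaner: the intersection theorem you quote typically carries completeness or $\sigma$-finiteness hypotheses that the compactness of $\mathbb F(x)$ lets you bypass entirely. One shared caveat, not a gap specific to you: both your Castaing/KRN tools and the Himmelberg theorems invoked in the paper require separability of $\mathbb Y$ (or at least of a set containing $h(\mathbb X)$ and all values of $\mathbb F$), which the statement does not list explicitly but which holds in the paper's application, where $\mathbb Y=\Gamma_I$ is separable.
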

\begin{proof}
Define the map $f:\mathbb X\times \mathbb Y\to \mathbb R$ by setting
\[f(x,y):=d_Y(h(x),y)-\mathrm{dist}(h(x),\mathbb F(x)).\]
Notice that $f(x,\cdot)$ is continuous for all $x\in \mathbb X$. We prove that $f(\cdot,y)$ is measurable for every $y\in \mathbb Y$.
Indeed, it is enough to show that $x\mapsto \mathrm{dist}(h(x),\mathbb F(x))$ is measurable. 
Define $g(x,y)=\mathrm{dist}(y,\mathbb F(x))$. Then $g(\cdot,y)$ is measurable for every $y\in \mathbb Y$ by the weak measurability of $\mathbb F$ (see Theorem 3.5 in \cite{Him}),
and the map $g(\cdot,\mathbb F(x))$ is continuous. Thus $g$ is measurable on $\mathbb X\times \mathbb Y$. The map $x\mapsto \mathrm{dist}(h(x),\mathbb F(x))$ is the composition 
between the measurable map $g$ and the continuous map $x\mapsto (h(x),x)$, hence it is measurable.\par
By compactness of $\mathbb F(x)$ and continuity of the distance function, for every $x\in\mathbb X$ there exists $y\in \mathbb F(x)$ such that $f(h(x),y)=0$,
in particular we have that $0\in \{f(x,y):\, y\in \mathbb F(x)\}$ for all $x\in\mathbb X$.
By Filippov's Implicit Function Theorem (see e.g. Theorem 7.1 in \cite{Him}) there is a measurable selection $\mathrm{Pr}_{\mathbb F}(\cdot)$ of $\mathbb{F}(\cdot)$
such that $f(x,\mathrm{Pr}_{\mathbb F}(x))=0$ for all $x\in \mathbb X$.
\end{proof}


\textbf{Proof of the Proposition \ref{prop:DPP}.}
\begin{proof}
Let $(\bar{\xi},\bar{\mu})$ be an optimal trajectory for $V(t_0,\xi_0,\mu_0)$, then
\begin{align*}
	V(t_0,\xi_0,\mu_0)&=  \int_{t_0}^\tau \mathscr  L(\bar{\xi}(s),\bar{\mu}_s)\,ds +  \int_\tau^T \mathscr L(\bar{\xi}(s),\bar{\mu}_s)\,ds + \mathscr  G(\bar{\xi}(T),\bar{\mu}_T)\\
	&=\int_{t_0}^\tau \mathscr  L(\bar{\xi}(s),\bar{\mu}_s)\,ds+ J(\tau,\bar{\xi}(\tau),\bar{\mu}_\tau,\bar{\xi}_{\shortmid{[\tau,T]}}, \bar{\mu}_{\shortmid{[\tau,T]}})\\
    &\geq \int_{t_0}^\tau \mathscr  L(\bar{\xi}(s),\bar{\mu}_s)\,ds+ V(\tau,\bar{\xi}(\tau),\bar{\mu}_\tau)\\
    &\geq \inf \left\{ \int_{t_0}^\tau \mathscr  L(\xi(s),\mu_s)\,ds + V(\tau,\xi(\tau),\mu_\tau), \ (\xi(\cdot),\mu_{\cdot}) \in \mathscr A_{[t_0,T]}(\xi_0,\mu_0)\right\},
\end{align*}
where we used the fact that $(\bar{\xi}_{\shortmid{[\tau,T]}}, \bar{\mu}_{\shortmid{[\tau,T]}})\in \mathscr A_{[\tau,T]}(\bar{\xi}(\tau),\bar{\mu}_\tau)$.
\par\medskip\par
Conversely, given any $(\xi(\cdot),\mu_\cdot)\in \mathscr A_{[t_0,T]}(\xi_0,\mu_0)$ and an optimal trajectory \par $(\hat{\xi},\hat{\mu})\in \mathscr A_{[\tau,T]}(\xi(\tau),\mu_\tau)$ for $V(\tau,\xi(\tau),\mu_\tau)$, 
we define
\begin{align*}
(\tilde{\xi}(t),\tilde{\mu}_t):=
\begin{cases}
({\xi}(t),{\mu}_t),&\textrm{ for }t\in[t_0,\tau],\\
(\hat{\xi}(t),\hat{\mu}_t)&\textrm{ for }t\in[\tau,T].
\end{cases}
\end{align*}
Since $(\tilde{\xi}(\cdot),\tilde{\mu}_\cdot) \in \mathscr A_{[t_0,T]}(\xi_0,\mu_0)$, recalling the optimality of $(\hat{\xi},\hat{\mu})$ we have 
\begin{align*}
V(t_0,\xi_0,\mu_0)&\leq J(t_0,\xi_0,\mu_0,\tilde{\xi}(\cdot),\tilde{\mu}_\cdot )=\int_{t_0}^\tau\mathscr L({\xi}(s),{\mu}_s)\,ds +\int_{t_0}^\tau \mathscr  L(\hat{\xi}(s),\hat{\mu}_s)\,ds+ \mathscr  G(\hat{\xi}(T),\hat{\mu}_T)\\
&=\int_{t_0}^\tau \mathscr  L({\xi}(s),{\mu}_s)\,ds+ V(\tau,\xi(\tau),\mu_\tau)
\end{align*}
Passing to the infimum on $(\xi(\cdot),\mu_\cdot)$, we get
\begin{align*}
V(t_0,\xi_0,\mu_0)\leq \inf \left\{ \int_{t_0}^\tau \mathscr  L(\xi(s),\mu_s)\,ds + V(\tau,\xi(\tau),\mu_\tau), \ (\xi(\cdot),\mu_\cdot)\in \mathscr A_{[t_0,T]}(\xi_0,\mu_0)\right\}.
\end{align*}
\end{proof}
\textbf{Proof of the Proposition \ref{prop:filippov}.}
\begin{proof}
	Let $\tau=\sup\{t\in [0,T]:\, \alpha^2:=2L^2\tau^2e^{2L\tau}(\tau+2)^2<1/2\}>0$ and $J=[0,\tau]$.
	We prove the result assuming that $\tau=T$ and so $I=J$. In the case $\tau<T$, we concatenate the trajectory $(\tilde\gamma,\boldsymbol{\tilde\mu})$ provided in $J$ by the statement 
	with any $(\hat\gamma,\boldsymbol{\hat\mu})\in\mathscr A_{[\tau,T]}(\tilde\gamma(\tau),\tilde\mu_{\tau})$ as follows
	\begin{itemize}
		\item define $\gamma=\tilde\gamma\oplus\hat\gamma$.
		\item given $\boldsymbol{\hat\eta}\in \boldsymbol{S_G}^{\boldsymbol\mu,\gamma}(\mu_\tau)$ with $\hat\mu_t=e_t\sharp\boldsymbol{\hat\eta}$, 
		we disintegrate 
		\begin{align*}
			\boldsymbol{\hat\eta}=\mu_\tau\otimes \hat\eta_y,&&\boldsymbol{\tilde\eta}=\mu_\tau\otimes\tilde\eta_y, 
		\end{align*}
		and define $\boldsymbol{\eta}\in\mathscr P(X\times\Gamma_I)$ by setting
		\[\iint_{X\times\Gamma_I} \varphi(x,\gamma)\,d\boldsymbol{\eta}(x,\gamma)=\int_X \left[\iint_{\Gamma_{[0,\tau]}\times \Gamma_{[\tau,T]}}
		\varphi(\gamma_1(0),\gamma_1\oplus\gamma_2)\,d\tilde\eta_y(x,\gamma_1)\,d\hat\eta_y(z,\gamma_2)\right]\,d\mu_{\tau}(y).\]
	\end{itemize}
	Indeed, notice that for $\mu_\tau$-a.e. $y\in X$ and  $\tilde\eta_y\otimes\hat\eta_y$-a.e.  $\left((x,\gamma_1),(z,\gamma_2)\right)\in (X\times\Gamma_{[0,\tau]})\times (X\times\Gamma_{[\tau,T]})$
	we have $\gamma_1(0)=x$, $\gamma_1(\tau)=\gamma_2(\tau)=y$. Moreover, set $\boldsymbol\mu=\{e_t\sharp\boldsymbol\eta\}_{t\in I}$, we have
	that $\boldsymbol\mu=\boldsymbol{\tilde\mu}\oplus\boldsymbol{\hat\mu}$, and $(\gamma,\boldsymbol\mu)$ and $\boldsymbol\eta$ satisfy all the requested properties.
	\par\medskip\par
	
	Thus from now on we assume without loss of generality that $\alpha^2:=2k^2T^2e^{2kT}(T+2)^2<1/2$.
	
	\emph{Claim 1: }There exists a constant $C>0$ such that for any $w(\cdot)\in AC(I;X)$ and $\boldsymbol\theta=\{\theta_t\}_{t\in I}\in C^0(I;\mathscr P_2(X))$,
	$x\in X$, $\gamma\in S_F^{\boldsymbol\theta,w}(x)$, $\xi\in S_G^{\boldsymbol\theta,w}(x)$, it holds 
	\begin{align*}
		|\xi(t)-\xi(0)|\le&C\left[1+|w(0)|+|\xi(0)|+\int_0^t \left(|w(s)-w(0)|+\mathrm{m}_2^{1/2}(\theta_s)\right)\,ds\right],\\
		|\gamma(t)-\gamma(0)|\le&C\left[1+|\gamma(0)|+\int_0^t \mathrm{m}_2^{1/2}(\theta_s)\,ds\right].
	\end{align*}
	\emph{Proof of Claim 1: }Define 
	\begin{align*}
		L=\max\{\mathrm{Lip}(F),\mathrm{Lip}(G)\},&&K=d_{G(0,0,0,\delta_0)}(0),&&\displaystyle C_1:=\max\left\{1,K,T,\dfrac{L}{2}T^2,L,e^{TL}\right\}.
	\end{align*}
	Recalling that $\dot\xi(s)\in G(s,w(s),\xi(s),\theta_s)$ for a.e. $s\in I$, we have
	\begin{align*}
		|\xi(t)-\xi(0)|\le&\int_0^t |\dot\xi(s)|\,ds\le\int_0^t \left(d_{G(0,0,0,\delta_0)}(\dot\xi(s))+K\right)\,ds\\
		\le&Kt+\mathrm{Lip}(G)\int_0^t \left(s+|w(s)|+|\xi(s)|+\mathrm{m}_2^{1/2}(\theta_s)\right)\,ds\\
		\le&\left(K+|w(0)|+|\xi(0)|\right)\cdot t+\dfrac{\mathrm{Lip}(G)}{2}t^2+ k\int_0^t \left(|w(s)-w(0)|+\mathrm{m}_2^{1/2}(\theta_s)\right)\,ds+\\&+\mathrm{Lip}(G)\int_0^s |\xi(s)-\xi(0)|\,ds.
	\end{align*}
	Gr\"onwall's inequality then implies
	\begin{align*}
		|\xi(t)-\xi(0)|\le&\left[\left(K+|w(0)|+|\xi(0)|\right)\cdot t+\dfrac{\mathrm{Lip}(G)}{2}t^2\right.+\\
		&\left.+\mathrm{Lip}(G)\int_0^t \left(|w(s)-w(0)|+\mathrm{m}_2^{1/2}(\theta_s)\right)\,ds\right]\cdot e^{t\cdot \mathrm{Lip}(G)}\\
		\le&\left[\left(1+|w(0)|+|\xi(0)|\right)\cdot T\max\{1,K\}+\dfrac{\mathrm{Lip}(G)}{2}T^2\right.+\\
		&\left.+\mathrm{Lip}(G)\int_0^t \left(|w(s)-w(0)|+\mathrm{m}_2^{1/2}(\theta_s)\right)\,ds\right]\cdot e^{T\cdot \mathrm{Lip}(G)}\\
		\le&\left[\left(1+|w(0)|+|\xi(0)|\right)\cdot C_1^2+C_1+C_1\int_0^t \left(|w(s)-w(0)|+\mathrm{m}_2^{1/2}(\theta_s)\right)\,ds\right]\cdot C_1\\
		\le&2C_1^3\left[1+|w(0)|+|\xi(0)|+\int_0^t \left(|w(s)-w(0)|+\mathrm{m}_2^{1/2}(\theta_s)\right)\,ds\right].
	\end{align*}
	The estimate on $\gamma$ is performed similarly, noticing that $F$ does not depend on $w(\cdot)$.\hfill$\diamond$
	\par\medskip\par
	\emph{Claim 2: }For any $(\bar x,\bar \mu)\in X\times \mathscr P_2(X)$, $w(\cdot)\in AC(I;X)$, $\boldsymbol\theta=\{\theta_t\}_{t\in I}\in C^0(I;\mathscr P_2(X))$, 
	$(\gamma,\boldsymbol\eta)\in S_F^{\boldsymbol\theta,w}(\bar x)\times \boldsymbol{S_G}^{\boldsymbol\theta,w}(\bar \mu)$, set $\mu_t=e_t\sharp\boldsymbol\eta$ it holds
	\[|\gamma(t)-\gamma(0)|+\mathrm{m}_2^{1/2}(\mu_t)\le 4C\left[1+|w(0)|+\mathrm{m}_2^{1/2}(\bar\mu)+|\bar x|+\int_0^t \left(|w(s)-w(0)|+\mathrm{m}_2^{1/2}(\theta_s)\right)\,ds\right],\]
	where $C>0$ is the constant appearing in Claim 1.
	\par\medskip\par
	\emph{Proof of Claim 2: }Follows by summing the first estimate  and the $L^2_{\boldsymbol\eta}$-norm of the second estimate in Claim 1.\hfill$\diamond$
	\par\medskip\par
	\emph{Claim 3: }Given $(\bar x,\bar\mu)\in X\times \mathscr P_2(X)$, consider the solution $z_{\bar x,\bar\mu}(\cdot)$ of the following Cauchy problem
	\begin{align*}\begin{cases}\dot z(t)=4Cz(t),\\ z(0)=4C(1+2|\bar x|+\mathrm{m}_2^{1/2}(\bar\mu)),\end{cases}\end{align*}
	where $C$ is the constant appearing in Claim 1, and define
	\begin{multline*}Q(\bar x,\bar \mu):=\{(w(\cdot),\boldsymbol\eta)\in C^0(I;X)\times\mathscr P_2(C^0(I;X)):\,\\ w(0)=\bar x, e_0\sharp\boldsymbol\eta=\bar\mu,
		|w(t)-w(0)|+\mathrm{m}_2^{1/2}(e_t\sharp\boldsymbol\eta)\le z_{\bar x,\bar\mu}(t)\textrm{ for all }t\in I\}.\end{multline*}
	Then $Q(\bar x,\bar \mu)$ is nonempty and closed, therefore is a complete metric subspace of $C^0(I;X)\times \mathscr P_2(C^0(I;X))$, and if $(w(\cdot),\boldsymbol\xi)\in Q(\bar x,\bar \mu)$, 
	we have 
	\[S_F^{\{e_t\sharp\boldsymbol\xi\}_{t\in I},w}(\bar x)\times \boldsymbol{S_G}^{\{e_t\sharp\boldsymbol\xi\}_{t\in I},w}(\bar \mu)\subseteq Q(\bar x,\bar \mu).\] 
	\par\medskip\par
	\emph{Proof of Claim 3: }We notice that set $\hat w(t)\equiv \bar x$ and $\boldsymbol{\hat \eta}=\bar\mu\otimes \delta_{\hat\gamma_x}$ where $\hat\gamma_x(t)=x$ for all $t\in I$,
	we have $(\hat w(\cdot),\boldsymbol{\hat\eta})\in Q(\bar x,\bar \mu)$. The closedness follow from the continuity of the map
	$w(\cdot)\mapsto |w(t)-\bar x|$ from $C^0(I;X)$ to $\mathbb R$, and from the continuity of $\boldsymbol\eta\mapsto e_t\sharp\boldsymbol\eta$.
	The last assertion follows from the definition of $z_{\bar x,\bar\mu}(\cdot)$ and from Claim 2.\hfill$\diamond$
	\par\medskip\par
	According to Theorem 9.7.2 in \cite{AuF}, set  $\overline B:= \overline{B_{\mathbb R^d}(0,1)}$, 
	there are $c>0$ and continuous maps $f:I\times X\times \mathscr P_2(X) \times \overline{B}\to \mathbb R^d$,
	$g:I\times X\times X\times\mathscr P_2(X) \times \overline{B}\to \mathbb R^d$ such that 
	\begin{itemize}
		\item $f$ and $g$ are parameterizations of $F$, $G$, respectively, i.e., 
		\begin{align*}F(t,x,\mu)=\{f(t,x,\mu,u):\, u\in \overline{B}\},&&G(t,x,y,\mu)=\{g(t,x,y,\mu,u):\, u\in \overline{B}\};\end{align*}
		\item for all $(t,u)\in I\times \overline{B}$ the maps $f(t,\cdot,\cdot,u)$ and $g(t,\cdot,\cdot,\cdot,u)$ are $cL$-Lipschitz continuous;
		\item for all $(t,x,\mu)\in I\times X\times \mathscr P_2(X)$, $y\in X$, $u,v\in \overline{B}$ it holds
		\begin{align*}
			|f(t,x,\mu,u)-f(t,x,\mu,v)|\le&c \max\{|h|:h\in F(t,x,\mu)\}\cdot |u-v|,\\
			|g(t,x,y,\mu,u)-g(t,x,y,\mu,v)|\le&c \max\{|h|:h\in G(t,x,y,\mu)\}\cdot |u-v|.
		\end{align*}
	\end{itemize}
	\par\medskip\par
	\emph{Claim 4: }
	Let $(w(\cdot),\boldsymbol\xi)\in Q(\bar x,\bar\mu)$ be fixed. Given $u\in\overline{B}$, denote by 
	$\gamma_{u}^{\bar x,\bar\mu,\boldsymbol\xi}(\cdot)\in S_F^{e_t\sharp\boldsymbol\xi,w}$ and $\xi_{x,u}^{\bar x,\bar\mu,w(\cdot),\boldsymbol\xi}(\cdot)\in S_G^{e_t\sharp\boldsymbol\xi,w}$ 
	the solutions of 
	\begin{align*}
		\begin{cases}\dot\gamma(t)=f(t,\gamma(t),e_t\sharp\boldsymbol\xi,u),\\ \gamma(0)=\bar x,\end{cases}&&
		\begin{cases}\dot\xi(t)=g(t,\xi(t),w(t),e_t\sharp\boldsymbol\xi,u),\\ \xi(0)=x,\end{cases}
	\end{align*}
	respectively. Then the maps $u\mapsto \gamma_{u}^{\bar x,\bar\mu,\boldsymbol\xi}$ and 
	$(x,u)\mapsto \xi_{x,u}^{\bar x,\bar\mu,w(\cdot),\boldsymbol\xi}(\cdot)$ are locally Lipschitz continuous.
	\par\medskip\par
	\emph{Proof of Claim 4: }
	For any $t\in I$ and $\gamma\in S_F^{w,\{e_t\sharp\boldsymbol\xi\}_{t\in I}}(\bar x)$, recalling Claim 3, we have 
	\begin{align*}
		|\gamma(t)-\bar x|\le\|z_{\bar x,\bar\mu}\|_{\infty},&&\mathrm{m}_2^{1/2}(e_t\sharp\boldsymbol\xi)\le \|z_{\bar x,\bar\mu}\|_{\infty}.
	\end{align*}
	Thus
	\[F(t,\gamma(t),e_t\sharp\boldsymbol\xi)\subseteq F(t,\bar x,\delta_0)+L\overline{B(0,2\|z_{\bar x,\bar\mu}\|_{\infty})},\]
	By continuity of $t\mapsto F(t,\bar x,\delta_0)$, there exits $K'_{\bar x}>0$ such that $F(t,\bar x,\delta_0)\subseteq \overline B(0,K'_{\bar x})$
	for all $t\in I$. Set $K_{\bar x,\bar \mu}:=K'_{\bar x}+2k\|z_{\bar x,\bar\mu}\|_{\infty}$, 
	it holds $F(t,\gamma(t),e_t\sharp\boldsymbol\xi)\subseteq \overline{B(0,K_{\bar x,\bar \mu})}$.
	Accordingly, given $u_1,u_2\in \overline{B}$, we have 
	\begin{align*}
		|\gamma_{u_1}^{\bar x,\bar\mu,\boldsymbol\xi}(t)-\gamma_{u_2}^{\bar x,\bar\mu,\boldsymbol\xi}(t)|
		\le&\int_0^t |f(s,\gamma_{u_1}^{\bar x,\bar\mu,\boldsymbol\xi}(s),e_s\sharp\boldsymbol\xi,u_1)\,ds-f(s,\gamma_{u_2}^{\bar x,\bar\mu,\boldsymbol\xi}(s),e_s\sharp\boldsymbol\xi,u_2)|\,ds\\
		\le&\int_0^t c(1+L+K_{\bar x,\bar\mu})\left(|u_1-u_2|+|\gamma_{u_1}^{\bar x,\bar\mu,\boldsymbol\xi}(s)-\gamma_{u_2}^{\bar x,\bar\mu,\boldsymbol\xi}(s)|\right)\,ds,
	\end{align*}
	and by Gr\"onwall's inequality
	\[|\gamma_{u_1}^{\bar x,\bar\mu,\boldsymbol\xi}(t)-\gamma_{u_2}^{\bar x,\bar\mu,\boldsymbol\xi}(t)|\le 
	Tc(1+L+K_{\bar x,\bar\mu})e^{Tc(1+k+K_{\bar x,\bar\mu})}\cdot |u_1-u_2|.\]
	Since the right hand side does not depend on $t$, we have the Lipschitz continuity of $u\mapsto \gamma_{u}^{\bar x,\bar\mu,\boldsymbol\xi}$.
	\par\medskip\par
	Fix now $x_0\in X$. For any $t\in I$ and $\xi\in S_G^{w,\{e_t\sharp\boldsymbol\xi\}_{t\in I}}(x)$ with $x\in\overline{B(x_0,1)}$, 
	recalling Claim 1 and Claim 3 
	\begin{align*}
		|\xi(t)-\xi(0)|\le C\left[1+|x_0|+1+z_{\bar x,\bar \mu}(t)\right],&&\mathrm{m}_2^{1/2}(e_t\sharp\boldsymbol\xi)\le \|z_{\bar x,\bar\mu}\|_{\infty}.
	\end{align*}
	By continuity of $t\mapsto F(t,x_0,\bar x,\delta_0)$, there exits $K''_{x_0,\bar x}>0$ such that 
	$G(t,x_0,\bar x,\delta_0)\subseteq \overline {B(0,K''_{x_0,\bar x})}$ for all $t\in I$.
	Set 
	\[\hat K_{x_0,\bar x,\bar \mu}:=K''_{x_0,\bar x}+L \left(C\left[1+|x_0|+1+\|z_{\bar x,\bar \mu}\|_{\infty}\right]+1+\|z_{\bar x,\bar \mu}\|_{\infty}\right),\]
	where $C$ is as in Claim 1, it holds 
	\begin{align*}
		G(t,\xi(t),&w(t),e_t\sharp\boldsymbol\xi)\subseteq\\ 
		\subseteq&G(t,x_0,\bar x,\delta_0)+L \left(|\xi(t)-\xi(0)|+|\xi(0)-x_0|+|w(t)-w(0)|+\mathrm{m}_2^{1/2}(e_t\sharp\boldsymbol\xi)\right)\overline{B(0,1)}\\
		\subseteq&\overline {B(0,K''_{x_0,\bar x})}+
		L \left(C\left[1+|x_0|+1+\|z_{\bar x,\bar \mu}\|_{\infty}\right]+1+\|z_{\bar x,\bar \mu}\|_{\infty}\right)\overline{B(0,1)}=
		\overline{B(0,\hat K_{x_0,\bar x,\bar \mu})}.
	\end{align*}
	Let $x_1,x_2\in \overline{B(0,1)}$, $u_1,u_2\in \overline{B}$ and consider
	\begin{align*}
		&|\xi_{x_1,u_1}^{\bar x,\bar\mu,w(\cdot),\boldsymbol\xi}(t)-\xi_{x_2,u_2}^{\bar x,\bar\mu,w(\cdot),\boldsymbol\xi}(t)|\le\\
		&\le |x_1-x_2|+\int_0^t |g(s,\xi_{x_1,u_1}^{\bar x,\bar\mu,w(\cdot),\boldsymbol\xi}(s),w(s),e_s\sharp\boldsymbol\xi,u_1)
		-g(s,\xi_{x_2,u_2}^{\bar x,\bar\mu,w(\cdot),\boldsymbol\xi}(s),w(s),e_s\sharp\boldsymbol\xi,u_1)|\,ds+\\
		&+\int_0^t |g(s,\xi_{x_2,u_2}^{\bar x,\bar\mu,w(\cdot),\boldsymbol\xi}(s),w(s),e_s\sharp\boldsymbol\xi,u_1)
		-g(s,\xi_{x_2,u_2}^{\bar x,\bar\mu,w(\cdot),\boldsymbol\xi}(s),w(s),e_s\sharp\boldsymbol\xi,u_2)|\,ds\\
		&\le |x_1-x_2|+L\int_0^t |\xi_{x_1,u_1}^{\bar x,\bar\mu,w(\cdot),\boldsymbol\xi}(s)-\xi_{x_2,u_2}^{\bar x,\bar\mu,w(\cdot),\boldsymbol\xi}(s)|\,ds+\int_0^t c\hat K_{x_0,\bar x,\bar \mu}\,ds\cdot |u_1-u_2|
	\end{align*}
	By Gr\"onwall's inequality
	\begin{align*}
		&|\xi_{x_1,u_1}^{\bar x,\bar\mu,w(\cdot),\boldsymbol\xi}(t)-\xi_{x_2,u_2}^{\bar x,\bar\mu,w(\cdot),\boldsymbol\xi}(t)|\le\\
		&\le |x_1-x_2|+\int_0^t |g(s,\xi_{x_1,u_1}^{\bar x,\bar\mu,w(\cdot),\boldsymbol\xi}(s),w(s),e_s\sharp\boldsymbol\xi,u_1)
		-g(s,\xi_{x_2,u_2}^{\bar x,\bar\mu,w(\cdot),\boldsymbol\xi}(s),w(s),e_s\sharp\boldsymbol\xi,u_1)|\,ds+\\
		&+\int_0^t |g(s,\xi_{x_2,u_2}^{\bar x,\bar\mu,w(\cdot),\boldsymbol\xi}(s),w(s),e_s\sharp\boldsymbol\xi,u_1)
		-g(s,\xi_{x_2,u_2}^{\bar x,\bar\mu,w(\cdot),\boldsymbol\xi}(s),w(s),e_s\sharp\boldsymbol\xi,u_2)|\,ds\\
		&\le e^{LT}(|x_1-x_2|+Tc\hat K_{x_0,\bar x,\bar \mu}\cdot |u_1-u_2|)\le e^{LT}(1+Tc\hat K_{x_0,\bar x,\bar \mu})(|x_1-x_2|+|u_1-u_2|).
	\end{align*}
	Since the last term does not depend on $t\in I$, by the arbitrariness of $x_0$ we have that 
	$(x,u)\mapsto \xi_{x,u}^{\bar x,\bar\mu,w(\cdot),\boldsymbol\xi}(\cdot)$ is locally Lipschitz continuous.\hfill$\diamond$
	\par\medskip\par
	Let $(w(\cdot),\boldsymbol\xi)\in Q(\bar x,\bar\mu)$. Given a Borel map $\bar v(\cdot)$ satisfying $\bar v(x)\in G(0,x,\bar x,\bar\mu)$ for all $x\in X$, by Filippov's implicit map theorem (see Theorem 7.1 in \cite{Him})
	there exists a Borel map $\bar u:X\to \overline{B}$ such that $\bar v(x)=g(0,x,\bar x,\bar \mu,\bar u(x))$ for all $x\in X$.
	As a consequence, the map $x\mapsto \xi_{x,u(x)}^{\bar x,\bar\mu,w(\cdot),\boldsymbol\xi}(\cdot)$ is Borel measurable and we can define
	$\boldsymbol\eta^{\bar v,\bar x,\bar\mu,w(\cdot),\boldsymbol\xi}:=\bar\mu \otimes \delta_{\xi_{x,\bar u(x)}^{\bar x,\bar\mu,w(\cdot),\boldsymbol\xi}}$,
	i.e., for all $\varphi\in C^0_b(X\times\Gamma_I)$ we set
	\[\int_{X\times\Gamma_I}\varphi(x,\gamma)\,d\boldsymbol\eta^{\bar v,\bar x,\bar\mu,w(\cdot),\boldsymbol\xi}(x,\gamma)
	=\int_X \varphi\left(x,\xi_{x,\bar u(x)}^{\bar x,\bar\mu,w(\cdot),\boldsymbol\xi}\right)\,d\bar\mu(x).\]
	Given $v_0\in F(0,\bar x,\bar\mu)$, let $u_0\in \overline{B}$ be such that $v_0=f(0,\bar x,\bar\mu,u_0)$. 
	\par\medskip\par
	\emph{Claim 5: }The map $(w(\cdot),\boldsymbol\xi)\mapsto (\gamma_{u_0}^{\bar x,\bar\mu,\boldsymbol\xi},\boldsymbol\eta^{\bar v,\bar x,\bar\mu,w(\cdot),\boldsymbol\xi})$ is a contraction in the complete metric space $Q(\bar x,\bar\mu)$
	and therefore it admits a unique fixed point.
	\par\medskip\par
	Let $(w_i(\cdot),\boldsymbol\xi_i)\in Q(\bar x,\bar\mu)$, $i=1,2$.
	Recalling that the map $e_s:X\times\Gamma_I\to X$ is $1$-Lipschitz continuous for all $s\in I$,
	\begin{align*}
		|\gamma_{u_0}^{\bar x,\bar\mu,\boldsymbol\xi_1}(t)-\gamma_{u_0}^{\bar x,\bar\mu,\boldsymbol\xi_2}(t)|
		\le&\int_0^t |f(s,\gamma_{u_0}^{\bar x,\bar\mu,\boldsymbol\xi_1}(s),e_s\sharp\boldsymbol\xi_1, u_0)
		-f(s,\gamma_{u_0}^{\bar x,\bar\mu,\boldsymbol\xi_2}(s),e_s\sharp\boldsymbol\xi_2, u_0)|\,ds\\
		\le&L\int_0^t \left(|\gamma_{u_0}^{\bar x,\bar\mu,\boldsymbol\xi_1}(s)-\gamma_{u_0}^{\bar x,\bar\mu,\boldsymbol\xi_2}(s)|+W_2(\boldsymbol\xi_1,\boldsymbol\xi_2)\right)\,ds
	\end{align*}
	Gr\"onwall's inequality yields
	\[|\gamma_{u_0}^{\bar x,\bar\mu,\boldsymbol\xi_1}(t)-\gamma_{u_0}^{\bar x,\bar\mu,\boldsymbol\xi_2}(t)|\le Lt e^{Lt}W_2(\boldsymbol\xi_1,\boldsymbol\xi_2).\]
	
	Given $x\in X$, we have
	\begin{align*}
		&|\xi_{x,\bar u(x)}^{\bar x,\bar\mu,w_1(\cdot),\boldsymbol\xi_1}(t)-\xi_{x,\bar u(x)}^{\bar x,\bar\mu,w_2(\cdot),\boldsymbol\xi_2}(t)|\\
		\le&\int_0^t |g(s,\xi_{x,\bar u(x)}^{\bar x,\bar\mu,w_1(\cdot),\boldsymbol\xi_1}(s),w_1(s),e_s\sharp\boldsymbol\xi_1,\bar u(x))-
		g(s,\xi_{x,\bar u(x)}^{\bar x,\bar\mu,w_2(\cdot),\boldsymbol\xi_2}(s),w_2(s),e_s\sharp\boldsymbol\xi_2,\bar u(x))|\,ds\\
		\le&L\int_0^t |\xi_{x,\bar u(x)}^{\bar x,\bar\mu,w_1(\cdot),\boldsymbol\xi_1}(s)-\xi_{x,\bar u(x)}^{\bar x,\bar\mu,w_2(\cdot),\boldsymbol\xi_2}(s)|\,ds
		+L\left[\int_0^t |w_1(s)-w_2(s)|\,ds+ W_2(\boldsymbol\xi_1,\boldsymbol\xi_2)\right].
	\end{align*}
	By Gr\"onwall's inequality
	\begin{align*}
		|\xi_{x,\bar u(x)}^{\bar x,\bar\mu,w_1(\cdot),\boldsymbol\xi_1}(t)-\xi_{x,\bar u(x)}^{\bar x,\bar\mu,w_2(\cdot),\boldsymbol\xi_2}(t)|
		\le&Le^{Lt}\left[\int_0^t |w_1(s)-w_2(s)|\,ds+ tW_2(\boldsymbol\xi_1,\boldsymbol\xi_2)\right]\\
		\le&Lt e^{Lt}\left[\|w_1-w_2\|_{\infty}+W_2(\boldsymbol\xi_1,\boldsymbol\xi_2)\right]\\
	\end{align*}
	We define a measure $\boldsymbol{\hat\pi}\in\mathscr P((X\times\Gamma_I),(X\times\Gamma_I))$ by setting 
	for all $\varphi\in C^0_b((X\times\Gamma_I)\times(X\times\Gamma_I))$
	\begin{multline*}
		\iint_{(X\times\Gamma_I)\times(X\times\Gamma_I)} \varphi((x_1,\xi_1),(x_2,\xi_2))\,d\boldsymbol{\hat\pi}((x_1,\xi_1),(x_2,\xi_2))=\\
		=\int_{X} \varphi\left(\left(x,\xi_{x,\bar u(x)}^{\bar x,\bar\mu,w_1(\cdot),\boldsymbol\xi_1}\right),\left(x,\xi_{x,\bar u(x)}^{\bar x,\bar\mu,w_2(\cdot),\boldsymbol\xi_2}\right)\right)\,d\bar\mu(x).
	\end{multline*}
	Notice that $\mathrm{pr}_i\sharp\boldsymbol{\hat\pi}=\boldsymbol\eta^{\bar v,\bar x,\bar\mu,w(\cdot),\boldsymbol\xi_i}$, $i=1,2$, therefore
	\begin{align*}
		W_2^2&(\boldsymbol\eta^{\bar v,\bar x,\bar\mu,w(\cdot),\boldsymbol\xi_1},\boldsymbol\eta^{\bar v,\bar x,\bar\mu,w(\cdot),\boldsymbol\xi_2})\\
		\le&\iint_{(X\times\Gamma_I)\times(X\times\Gamma_I)}\left[ |x_1-x_2|^2+\|\xi_1-\xi_2\|_{\infty}^2\right]\,d\boldsymbol{\hat\pi}((x_1,\xi_1),(x_2,\xi_2))\\
		=&\int_{X}|\xi_{x,\bar u(x)}^{\bar x,\bar\mu,w_1(\cdot),\boldsymbol\xi_1}(t)-\xi_{x,\bar u(x)}^{\bar x,\bar\mu,w_2(\cdot),\boldsymbol\xi_2}(t)|^2\,d\bar\mu(x)\\
		\le&L^2T^2e^{2LT}(T+1)^2\left[\|w_1-w_2\|_\infty+ W_2(\boldsymbol\xi_1,\boldsymbol\xi_2)\right]^2.
	\end{align*}
	We end up with 
	\begin{align*}
		\|\gamma_{u_0}^{\bar x,\bar\mu,\boldsymbol\xi_1}-\gamma_{u_0}^{\bar x,\bar\mu,\boldsymbol\xi_2}(t)\|_{\infty}^2&+
		W_2^2(\boldsymbol\eta^{\bar v,\bar x,\bar\mu,w(\cdot),\boldsymbol\xi_1},\boldsymbol\eta^{\bar v,\bar x,\bar\mu,w(\cdot),\boldsymbol\xi_2})\le\\
		\le&L^2T^2e^{2LT}(T+2)^2\left[\|w_1-w_2\|_\infty+ W_2(\boldsymbol\xi_1,\boldsymbol\xi_2)\right]^2\\
		\le&2L^2T^2e^{2LT}(T+2)^2\left[\|w_1-w_2\|^2_\infty+ W_2^2(\boldsymbol\xi_1,\boldsymbol\xi_2)\right]\\
		=&\alpha^2\cdot\left[\|w_1-w_2\|^2_\infty+ W_2^2(\boldsymbol\xi_1,\boldsymbol\xi_2)\right]\\
	\end{align*}
	and by assumption $0<\alpha<1$. By contraction's principle there is a unique fixed point $(\gamma,\boldsymbol\eta)\in Q(\bar x,\bar \mu)$.
	
	In particular, we have that $\dot\gamma(t)=f(t,\gamma(t),e_t\sharp\boldsymbol\eta,u_0)$, $\gamma(0)=\bar x$, in particular $\gamma\in C^1(\overline{I})$ and 
	$\dot\gamma(0)=f(0,\bar x,\bar \mu,u_0)=v_0$.
	On the other hand, for $\boldsymbol\eta$-a.e. $\xi\in \Gamma_I$ we have $\dot\xi(t)=g(t,\xi(t),\gamma(t),e_t\sharp\boldsymbol\eta,\bar u(\xi(0)))$,
	in particular, $\xi(\cdot)\in C^1(\overline{I})$ and $\dot\xi(0)=g(0,\xi(0),\bar x,\bar\mu,\bar u(\xi(0)))=\bar v(\xi(0))$.
\end{proof}

\begin{bibdiv}
\begin{biblist}

\bib{AGS}{book}{
  author ={Ambrosio, Luigi},
  author ={Gigli, Nicola},
  author ={Savar{\'e}, Giuseppe},
  title ={Gradient flows in metric spaces and in the space of probability measures},
  series ={Lectures in Mathematics ETH Z\"urich},
  edition={2},
  publisher={Birkh\"auser Verlag},
  place={Basel},
  date ={2008},
}

\bib{AC}{book}{
   author={Aubin, Jean-Pierre},
   author={Cellina, Arrigo},
   title={Differential inclusions},
   series={Grundlehren der Mathematischen Wissenschaften [Fundamental
   Principles of Mathematical Sciences]},
   volume={264},
   note={Set-valued maps and viability theory},
   publisher={Springer-Verlag, Berlin},
   date={1984},
}

\bib{AuF}{book}{
   author={Aubin, Jean-Pierre},
   author={Frankowska, H{\'e}l{\`e}ne},
   title={Set-valued analysis},
   series={Modern Birkh\"auser Classics},
   publisher={Birkh\"auser Boston Inc.},
   place={Boston, MA},
   date={2009},
}

\bib{AJZ}{article}{
 author = {Aussedat, A.}, Author = { Jerhaoui, O.}, Author = { Zidani, H.},
 title = {Viscosity solutions of centralized control problems in measure spaces},
 journal = {ESAIM, Control Optim. Calc. Var.},
 ISSN = {1292-8119},
 Volume = {30},
 Pages = {37},
 Note = {Id/No 91},
 Year = {2024}
}

\bib{AMQ}{article}{
    AUTHOR = {Averboukh, Yurii}, Author = {Marigonda, Antonio}, Author = { Quincampoix, Marc},
     TITLE = {Extremal shift rule and viability property for mean field-type
              control systems},
   JOURNAL = {J. Optim. Theory Appl.},
    VOLUME = {189},
      YEAR = {2021},
}

\bib{BadF}{article}{
 Author = {Badreddine, Zeinab},  Author = {Frankowska, H{\'e}l{\`e}ne},
 Title = {Solutions to {Hamilton}-{Jacobi} equation on a {Wasserstein} space},
 Journal = {Calc. Var. Partial Differ. Equ.},
 ISSN = {0944-2669},
 Volume = {61},
 Number = {1},
 Pages = {41},
 Note = {Id/No 9},
 Year = {2022},
}

\bib{BonF}{article}{
 Author = {Bonnet, Beno{\^{\i}}t  }, Author = {Frankowska, H{\'e}l{\`e}ne},
 Title = {Semiconcavity and sensitivity analysis in mean-field optimal control and applications},
 Journal = {J. Math. Pures Appl. (9)},
 ISSN = {0021-7824},
 Volume = {157},
 Pages = {282--345},
 Year = {2022},
}

\bib{CDLL}{book}{
   author={Cardaliaguet, Pierre},
   author={Delarue, Fran\c{c}ois},
   author={Lasry, Jean-Michel},
   author={Lions, Pierre-Louis},
   title={The master equation and the convergence problem in mean field
   games},
   series={Annals of Mathematics Studies},
   volume={201},
   publisher={Princeton University Press, Princeton, NJ},
   date={2019},
}

\bib{CQ}{article}{
 author={Cardaliaguet, P.},
 author={Quincampoix, M.},
 title={Deterministic differential games under probability knowledge of initial condition},
 journal={International Game Theory Review},
 volume={10},
 number={1},
 pages={1--16},
 date={2008},
 publisher={World Scientific, Singapore},
}

\bib{CMattain}{article}{
    AUTHOR = {Cavagnari, Giulia}, Author = {Marigonda, Antonio},
     TITLE = {Attainability property for a probabilistic target in
              {W}asserstein spaces},
   JOURNAL = {Discrete Contin. Dyn. Syst.},
    VOLUME = {41},
      YEAR = {2021},
      
}

\bib{CMQ}{article}{
    AUTHOR = {Cavagnari, Giulia}, Author = {Marigonda, Antonio}, Author = {Quincampoix,
              Marc},
     TITLE = {Compatibility of state constraints and dynamics for multiagent
              control systems},
   JOURNAL = {J. Evol. Equ.},
    VOLUME = {21},
      YEAR = {2021},    
}

\bib{CLOS}{article}{
    AUTHOR = {Cavagnari, Giulia}, Author = {Lisini, Stefano}, Author = {Orrieri, Carlo}, Author = {
              Savar\'{e}, Giuseppe},
     TITLE = {Lagrangian, {E}ulerian and {K}antorovich formulations of
              multi-agent optimal control problems: equivalence and
              gamma-convergence},
   JOURNAL = {J. Differential Equations},
    VOLUME = {322},
      YEAR = {2022},
     PAGES = {268--364},
}

\bib{G1}{article}{
 author={Gangbo, Wilfrid},
 author={Nguyen, Truyen},
 author={Tudorascu, Adrian},
 title={Hamilton-Jacobi equations in the Wasserstein space},
 journal={Methods and Applications of Analysis},
 volume={15},
 number={2},
 pages={155--184},
 date={2008},
 publisher={International Press of Boston, Somerville, MA},
}

\bib{G2}{article}{
 author={Gangbo, Wilfrid},
 author={Tudorascu, Adrian},
 title={On differentiability in the Wasserstein space and well-posedness for Hamilton-Jacobi equations},
 journal={Journal de Math{\'e}matiques Pures et Appliqu{\'e}es. Neuvi{\`e}me S{\'e}rie},
 volume={125},
 pages={119--174},
 date={2019},
 publisher={Elsevier (Elsevier Masson), Paris},
}

\bib{Him}{article}{
   author={Himmelberg, C. J.},
   title={Measurable relations},
   journal={Fund. Math.},
   volume={87},
   date={1975},
   pages={53--72},
   issn={0016-2736},
   doi={10.4064/fm-87-1-53-72},
}

\bib{JMQ}{article}{
author = {Jimenez, Chlo\'e},
author = {Marigonda, Antonio},
author = {Quincampoix, Marc},
title  = {Optimal control of multiagent systems in the Wasserstein space},
journal = {Calculus of Variations and Partial Differential Equations},
volume = {59 (2)},
number = {58},
pages  = {1--45},
year   = {2020},
}

\bib{Jimenez}{article}{
    AUTHOR = {Jimenez, Chlo\'{e}},
     TITLE = {Equivalence between strict viscosity solution and viscosity solution in the {Wasserstein} space and regular extension of the {Hamiltonian} in {{\(L^2_{\mathbb{P}}\)}}},
 Journal = {J. Convex Anal.},
 ISSN = {0944-6532},
 Volume = {31},
 Number = {2},
 Pages = {619--670},
 Year = {2024},
}

\bib{JMQ21}{article}{
    AUTHOR = {Jimenez, Chlo\'{e}}, Author = {Marigonda, Antonio}, Author = {Quincampoix,
              Marc},
     TITLE = {Dynamical systems and {H}amilton-{J}acobi-{B}ellman equations
              on the {W}asserstein space and their {$L^2$} representations},
   JOURNAL = {SIAM J. Math. Anal.},
    VOLUME = {55},
      YEAR = {2023},
    NUMBER = {5},
     PAGES = {5919--5966},
}

\bib{LL}{article}{
 Author = {Lasry, Jean-Michel }, Author = { Lions, Pierre-Louis},
 Title = {Mean field games},
 Journal = {Jpn. J. Math. (3)},
 ISSN = {0289-2316},
 Volume = {2},
 Number = {1},
 Pages = {229--260},
 Year = {2007},
}

\bib{MQ}{article}{
 author={Marigonda, Antonio},
 author={Quincampoix, Marc},
 title={Mayer control problem with probabilistic uncertainty on initial positions},
 journal={Journal of Differential Equations},
 volume={264},
 number={5},
 pages={3212--3252},
 date={2018},
 publisher={Elsevier (Academic Press), San Diego, CA},
} 

\bib{Nad}{article}{
author={Nadler, S.B., Jr.}, 
title={Multi-valued contraction mappings},
journal={Pac. J. Math.},
date={1969}, 
volume={30}, 
pages={475--487}
}

\bib{San}{article}{
    author={Santambrogio, F.},
    title ={$\{$Euclidean, metric, and Wasserstein$\}$ gradient flows: an overview.},
    journal={Bull. Math. Sci.},
    volume={7},
    pages={87--154},
    date={2017},
    publisher={Springer},
    doi={doi.org/10.1007/s13373-017-0101-1},
}

\bib{Santambrogio}{book}{
   author={Santambrogio, Filippo},
   title={Optimal transport for applied mathematicians},
   series={Progress in Nonlinear Differential Equations and their
   Applications},
   volume={87},
   note={Calculus of variations, PDEs, and modeling},
   publisher={Birkh\"{a}user/Springer, Cham},
   date={2015},
}

\bib{Villani}{book}{
AUTHOR = {Villani, C\'{e}dric},
     TITLE = {Optimal transport},
    SERIES = {Grundlehren der Mathematischen Wissenschaften [Fundamental
              Principles of Mathematical Sciences]},
    VOLUME = {338},
      NOTE = {Old and new},
 PUBLISHER = {Springer-Verlag, Berlin},
      YEAR = {2009},
}
\end{biblist}
\end{bibdiv}

\end{document}